\documentclass[12pt]{amsart}
\usepackage[latin1]{inputenc}
\usepackage[english]{babel}
\usepackage{amsmath}
\usepackage{amsthm}
\usepackage{amssymb}
\usepackage{amsmath, amsthm, amsfonts, mathrsfs, amsfonts}
\usepackage{latexsym}
\usepackage{textcomp}
\usepackage{enumerate}

\usepackage{xcolor}
\usepackage{hyperref}

\theoremstyle{definition}
\newtheorem{definition}{Definition}[section]
\newtheorem{notation}[definition]{Notation}

\newtheorem{remark}[definition]{Remark}

\theoremstyle{remark}

\theoremstyle{plain}

\theoremstyle{plain}
\newtheorem{cor}[definition]{Corollary}

\theoremstyle{plain}
\newtheorem{lemma}[definition]{Lemma}

\theoremstyle{plain}
\newtheorem{theorem}[definition]{Theorem}

\renewcommand{\phi}{\varphi}

\newcommand{\One}{\operatorname{\mathbf{1}}}

\theoremstyle{remark}

\theoremstyle{definition}
\newtheorem{hypo}[definition]{Hypothesis}

\newcommand{\C}{\mathrm{C}}
\newcommand{\D}{\bold{D}}

\newcommand{\F}{\mathcal{F}}
\newcommand{\G}{\mathrm{G}}
\newcommand{\M}{\mathcal{M}}

\newcommand{\N}{\mathcal{N}}
\newcommand{\X}{\mathcal{X}}
\newcommand{\Y}{\mathcal{Y}}
\newcommand{\rN}{\mathrm{N}}

\newcommand{\W}{\bold{W}}

\newcommand{\Aut}{\mathrm{Aut}}

\newcommand{\Sym}{\mathrm{Sym}}

\newcommand{\norm}{\mathrel{\unlhd}}

\def \L {\mathcal L}
\def \H {\mathcal H}

\def \ov {\overline}

\oddsidemargin 0.1in \evensidemargin 0.1in \textwidth=6.1in
\textheight=8.5in \itemsep=0in
\parsep=0.1in

\author{Valentina Grazian  and Ellen Henke }
\title{Semidirect products and wreath products of localities}

\begin{document}
\maketitle

\begin{abstract}
We develop a theory of semidirect products of partial groups and loca-lities. Our concepts generalize the notions of direct products of partial groups and localities, and of semidirect products of groups.
\end{abstract}


\section{Introduction}

Partial groups and localities were introduced by Chermak \cite{Ch} in connection with his proof of the existence and uniqueness of centric linking systems, a results which ensures in particular that there is a locality attached to every saturated fusion system. Roughly speaking, a partial group is a set $\L$ together with a ``partial product'' which is only defined on certain words in $\L$, subject to certain axioms which resemble the axioms of a group. A locality is a partial group equipped with some extra structure; in particular, every locality contains a ``Sylow subgroup'' on which one can define a fusion system.

\smallskip

Since there is no suitable notion of an action of a fusion system, it is difficult to give a general definition of semidirect products of fusion systems. However, a quite canonical definition of direct products was already introduced by Broto, Levi and Oliver \cite{BLO2}. In this paper, we approach the problem by studying semidirect products of partial groups and localities. As for groups, we define both internal and external semidirect products. After some preliminaries, we start by introducing external and internal semidirect products of partial groups in Sections~\ref{S:ExternalPartialGroup} and \ref{S:InternalPartialGroup}. Building on that, we introduce semidirect products of localities  in Section~\ref{S:SemidirectProdLoc}; as a special case, we define in Subsection~\ref{Ss:SemidirectProdGroupLoc} semidirect products of groups with localities. This is in particular useful for defining locality versions of wreath products in Section~\ref{S:WreathProducts}.

\smallskip


The concepts and results presented in this paper generalize the notions of direct products of partial groups and localities introduced by the second author of this paper \cite{direct.prod}. Forming direct products of localities is in a certain sense compatible with forming direct products of fusion systems (cf. \cite[Lemma~5.1]{direct.prod}). Semidirect pro-ducts of partial groups can also be seen as generalizations of semidirect products of groups.

\smallskip

\textit{Notation:} Let $p$ always be a prime. Throughout, we write homomorphisms of groups or of partial groups exponentially on the right hand side. 

\section{Partial groups and localities}

In this section we introduce some basic definitions and notations that will be used throughout this paper.  We refer the reader to \cite{loc1} for a more comprehensive introduction to partial groups and localities.

\subsection{Partial groups} 
For any set $\M$, write $\W(\M)$ for the free monoid on $\M$. Thus, an element of $\W(\M)$ is a word with letters in $\M$. The multiplication on $\W(\M)$ consists of concatenation of words, to be denoted $u \circ v$. The empty word will be denoted by $\emptyset$. For any word $u\in\W(\M)$ and $k\in\mathbb{N}_0$, $u^k$ stands for the concatenation of $k$ copies of $u$, i.e. $u^k$ is defined inductively by $u^0=\emptyset$ and $u^{k+1}=u\circ u^k$ for every $k\in\mathbb{N}_0$. 


\begin{definition}[Partial Group]\label{partial}
Let $\L$ be a non-empty set, let $\D(\L)$ be a subset of $\W(\L)$, let $\Pi \colon  \D(\L) \rightarrow \L$ be a map and let $(-)^{-1} \colon \L \rightarrow \L$ be an involutory bijection, which we extend to a map 
\[(-)^{-1} \colon \W(\L) \rightarrow \W(\L), w = (g_1, \dots, g_k) \mapsto w^{-1} = (g_k^{-1}, \dots, g_1^{-1})).\]
We say that $\L$ is a \emph{partial group} with product $\Pi$ and inversion $(-)^{-1}$ if the following hold:
\begin{itemize}
\item[(PG1)]  $\L \subseteq \D(\L)$  (i.e. $\D(\L)$ contains all words of length 1), and
\[  u \circ v \in \D(\L) \Rightarrow u,v \in \D(\L);\]
(So in particular, $\emptyset\in\D(\L)$.)
\item[(PG2)] $\Pi$ restricts to the identity map on $\L$;
\item[(PG3)] $u \circ v \circ w \in \D(\L) \Rightarrow u \circ (\Pi(v)) \circ w \in \D(\L)$, and $\Pi(u \circ v \circ w) = \Pi(u \circ (\Pi(v)) \circ w)$;
\item[(PG4)] $w \in  \D(\L) \Rightarrow  w^{-1} \circ w\in \D(\L)$ and $\Pi(w^{-1} \circ w) = \One$ where $\One:=\Pi(\emptyset)$.
\end{itemize}
\end{definition}

Note that any group $G$ can be regarded as a partial group with $\D(G)=\W(G)$ by extending the ``binary'' product to a map $\Pi_G\colon\W(G)\rightarrow G,(g_1,g_2,\dots,g_n)\mapsto g_1g_2\cdots g_n$. We will always write $\Pi_\G$ for this product.

\smallskip

If $\L$ is a partial group with product $\Pi\colon\D(\L)\rightarrow\L$ and $u=(f_1,f_2,\dots,f_n)\in\D(\L)$, then we write also $f_1f_2\cdots f_n$ for $\Pi(u)$.

\begin{lemma}\label{L:PartialGroup}
If $\L$ is a partial group and $u,v\in\W(\L)$, then the following hold:
\begin{itemize}
 \item [(a)] If $u\circ v\in\D(\L)$, then we have $(\Pi(u),\Pi(v))\in\D(\L)$ and $\Pi(u\circ v)=\Pi(\Pi(u),\Pi(v))$.
 \item [(b)] If $u\circ v\in\D(\L)$, then $u^{-1}\circ u\circ v\in\D(\L)$ and $u\circ v\circ v^{-1}\in\D(\L)$. Moreover, $\Pi(u^{-1}\circ u\circ v)=\Pi(v)$ and $\Pi(u\circ v\circ v^{-1})=\Pi(u)$.
 \item [(c)] If $u\in\D(\L)$ and $u=u^{-1}$, then $u^k\in \D(\L)$ and $\Pi(u^k)=\Pi(\Pi(u)^k)$ for every $k\in\mathbb{N}_0$.
 \item [(d)] If $u\circ v\in\D(\L)$, then $u\circ (\One)\circ v\in\D(\L)$ and $\Pi(u\circ (\One)\circ v)=\Pi(u\circ v)$. More generally, if $u_1\circ \cdots \circ u_n\in\D(\L)$ and $e_0,\dots,e_n\in\W(\{\One\})$, then $w=e_0\circ u_1\circ e_1\circ\cdots \circ e_{n-1}\circ u_n\circ e_n\in\D(\L)$ and $\Pi(w)=\Pi(u_1\circ\cdots\circ u_n)$.
\item [(e)] $\Pi(w)=\One$ for every $w\in\W(\{\One\})$.
\item [(f)] For every $f\in\L$ and every $k\in\mathbb{N}_0$, we have $(f^{-1},\One,f)^k\in\D$ and $\Pi((f^{-1},\One,f)^k)=\One$. 
\item [(g)] If $u\in\D(\L)$, then $u^{-1}\in\D(\L)$ and $\Pi(u)^{-1}=\Pi(u^{-1})$. 
\item [(h)] If $u,v,w\in\W(\L)$ with $u\circ v\circ v^{-1}\circ w\in\D(\L)$, then $u\circ w\in\D(\L)$ and $\Pi(u\circ v\circ v^{-1}\circ w)=\Pi(u\circ w)$. 
\end{itemize} 
\end{lemma}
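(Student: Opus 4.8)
The plan is to deduce all eight parts from the axioms by repeatedly applying three elementary moves on words: \textbf{(i)} by (PG1), applied first to a prefix and then to a suffix, every contiguous subword of a word in $\D(\L)$ again lies in $\D(\L)$; \textbf{(ii)} by (PG3), inside a word lying in $\D(\L)$ one may replace any contiguous subword by the length-one word consisting of its product, staying in $\D(\L)$ and preserving the overall product --- and since $\One=\Pi(\emptyset)$, the special case where that subword is $\emptyset$ lets one \emph{insert} an occurrence of $\One$ at any position; \textbf{(iii)} by (PG4), $w^{-1}\circ w\in\D(\L)$ with $\Pi(w^{-1}\circ w)=\One$ for every $w\in\D(\L)$. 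I would prove the parts in the order (a), (d), (e), (g), (b), (h), (c), (f), so that each is available when a later one needs it. For (a): apply (ii) to $u\circ v=u\circ v\circ\emptyset$ to collapse $v$, then to $u\circ(\Pi(v))=\emptyset\circ u\circ(\Pi(v))$ to collapse $u$. For (d): the first assertion is (ii) applied to $u\circ v=u\circ\emptyset\circ v$, and the general assertion follows by inserting the identities one at a time, each insertion an instance of the first assertion. Then (e) is the case of the general assertion of (d) with no factors $u_i$, starting from $\Pi(\emptyset)=\One$.

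For (g): $u\in\D(\L)\Rightarrow u^{-1}\in\D(\L)$ is (iii) followed by (i). For $\Pi(u^{-1})=\Pi(u)^{-1}$ I first record the auxiliary statement that $(a,b)\in\D(\L)$ with $\Pi(a,b)=\One$ forces $b=a^{-1}$: by (iii) applied to $(a,b)$ and then (i), $(a^{-1},a,b)\in\D(\L)$; collapsing $(a^{-1},a)$ there (its product is $\One$ by (iii) applied to $(a)$) and, separately, collapsing $(a,b)$ via (PG3) gives $\Pi(\One,b)=\Pi(a^{-1},\One)$, whose two sides evaluate to $b$ and $a^{-1}$ by (d) and (PG2). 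Since by (iii) and (a) we have $(\Pi(u^{-1}),\Pi(u))\in\D(\L)$ with product $\Pi(u^{-1}\circ u)=\One$, the auxiliary statement (with $a=\Pi(u^{-1})$, $b=\Pi(u)$) gives $\Pi(u)=\Pi(u^{-1})^{-1}$, hence the claim.

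For (b): (iii) applied to $u\circ v$ gives $v^{-1}\circ u^{-1}\circ u\circ v\in\D(\L)$, so $u^{-1}\circ u\circ v\in\D(\L)$ by (i); collapsing $u^{-1}\circ u$ to $\One$ and deleting it via (d) evaluates the product as $\Pi(v)$. For the second half, (g) gives $(u\circ v)^{-1}=v^{-1}\circ u^{-1}\in\D(\L)$; applying (iii) to this word yields $u\circ v\circ v^{-1}\circ u^{-1}\in\D(\L)$, and (i) strips off $u^{-1}$; the product is $\Pi(u)$, using $\Pi(v\circ v^{-1})=\One$ from (iii) applied to $v^{-1}$. For (h): collapse $v\circ v^{-1}$ to $\One$, reducing to the implication $u\circ(\One)\circ w\in\D(\L)\Rightarrow u\circ w\in\D(\L)$ with equal products; if $u$ (respectively $w$) is non-empty, absorb the $\One$ into the last letter of $u$ (respectively the first letter of $w$) and collapse that two-letter subword by (PG3), using (d) to see the collapsed product is that letter; the case $u=w=\emptyset$ is immediate.

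For (c): from $u=u^{-1}$ one gets $(u^m)^{-1}=u^m$ for every $m$, so (iii) applied to $u^m$ gives $u^{2m}\in\D(\L)$; iterating from $m=1$ yields $u^{2^j}\in\D(\L)$ for all $j$, hence $u^k\in\D(\L)$ for every $k$ by (i), since $u^{2^j}=u^k\circ u^{2^j-k}$ once $2^j\ge k$. The product identity then follows by induction on $k$ via (a), namely $\Pi(u^{k+1})=\Pi(\Pi(u),\Pi(u^k))$, the word $\Pi(u)^k$ being handled identically (note $\Pi(u)=\Pi(u)^{-1}$ by (g), so the doubling argument applies to it too). For (f): the word $(f^{-1},\One,f)$ is its own inverse and lies in $\D(\L)$ with product $\One$ by (iii) and (d), so (c) applies and gives $\Pi(\Pi(f^{-1},\One,f)^k)=\Pi((\One)^k)=\One$ by (e). The only steps needing real thought rather than bookkeeping are the ``uniqueness of inverses'' auxiliary in (g); the fact that (PG3) has no literal converse, so that removing an inserted $\One$ --- or a cancelling pair $v\circ v^{-1}$ --- from the \emph{interior} of a word has to be effected by absorbing it into a neighbouring letter, which forces the small case split in (h); and the doubling trick $u^m\in\D(\L)\Rightarrow u^{2m}\in\D(\L)$ underlying (c).
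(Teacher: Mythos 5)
Your argument is correct and, for parts (a), (b), (d), (e), (f) and (h), is essentially identical to the paper's: (PG3) applied twice for (a), insertion of $\One$ via $u\circ\emptyset\circ v$ for (d), the prefix/suffix trick with (PG4) for (b), and the same ``absorb the $\One$ into a neighbouring letter'' device for (h), including the same case split on $u=\emptyset$. You depart from the paper in two places. First, for (g) the paper simply cites \cite[Lemma~1.4(f)]{loc1}, whereas you give a self-contained proof via the auxiliary fact that $\Pi(a,b)=\One$ forces $b=a^{-1}$; your derivation of that fact (and hence of $\Pi(u^{-1})=\Pi(u)^{-1}$) is sound and makes the lemma independent of the external reference, at the cost of a little extra length. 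Second, for the domain part of (c) the paper runs a one-step induction, $u^k\in\D(\L)\Rightarrow u^{k+1}=u^{-1}\circ u\circ u^{k}\in\D(\L)$ directly from (b), while you use a doubling argument ($u^m\in\D(\L)\Rightarrow u^{2m}\in\D(\L)$ via (PG4) and $(u^m)^{-1}=u^m$, then truncate with (PG1)); both work, the paper's being marginally shorter. Your reordering, proving (d) before (b), is a harmless cosmetic change that makes evaluations like $\Pi(u\circ(\One))=\Pi(u)$ immediately available.
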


\begin{proof}
Applying axiom (PG3) twice gives (a). Notice that $(u\circ v)^{-1}=v^{-1}\circ u^{-1}$. So the first part of property (b) follows from (PG1) and (PG4). The second part of (b) follows now from (PG3) and (PG4). If $u$ is as in (c), it follows from $\emptyset \in\D(\L)$ and (b) by induction that $u^k\in\D(\L)$. So it follows from (a) by induction that $\Pi(u^k)=\Pi(\Pi(u)^k)$, and this completes the proof of (c).

\smallskip

If $u\circ v=u\circ\emptyset\circ v\in\D(\L)$, then it follows from axiom (PG3) that $u\circ (\One)\circ v=u\circ (\Pi(\emptyset))\circ v\in\D(\L)$ and $\Pi(u\circ(\One)\circ v)=\Pi(u\circ v)$. So the first part of (d) holds, and the second part follows then by induction on the length of $e_0\circ e_1\circ\cdots\circ e_n$. Thus, (d) holds. Property (e) is a special case of (d) as $\emptyset\in\D(\L)$.

\smallskip

By axioms (PG1) and (PG4), we have $(f^{-1},f)\in\D(\L)$. So by property (d), $u:=(f^{-1},\One,f)\in\D(\L)$. As $u^{-1}=u$, it follows from (c) that $u^k\in\D(\L)$ and $\Pi(u^k)=\Pi(\Pi(u)^k)=\Pi(\One^k)$. Using (e) we conclude $\Pi(u^k)=\One$. This proves (f). 

\smallskip   

Property (g) is shown in \cite[Lemma~1.4(f)]{loc1}. Let now $u,v,w\in\W(\L)$ with $u\circ v\circ v^{-1}\circ w\in\D(\L)$. If $u=\emptyset$, then (h) follows from (b). So we may assume $u\neq\emptyset$. Write $u=(g_1,\dots,g_k)$. Using (PG3),(PG4) and property (d) one sees now that $u\circ (\Pi(v\circ v^{-1}))\circ w=u\circ (\One)\circ w=(g_1,\dots,g_{k-1})\circ (g_k,\One)\circ w\in\D(\L)$ and thus $(g_1,\dots,g_{k-1})\circ (\Pi(g_k,\One))\circ w=(g_1,\dots,g_{k-1})\circ (g_k)\circ w=u\circ w\in\D(\L)$. Moreover, $\Pi(u\circ v\circ v^{-1}\circ w)=\Pi(u\circ (\Pi(v\circ v^{-1}))\circ w)=\Pi(u\circ (\One)\circ w)=\Pi((g_1,\dots,g_{k-1})\circ (g_k,\One)\circ w)=\Pi((g_1,\dots,g_{k-1})\circ (\Pi(g_k,\One))\circ w)=\Pi((g_1,\dots,g_{k-1})\circ (g_k)\circ w)=\Pi(u\circ w)$.
\end{proof}

\begin{definition}
Let $\L$ be a partial group. For every $g\in \L$ we define
\[ \D(g) = \{ x\in \L \mid (g^{-1}, x, g) \in \D(\L)\}.\]
The map $c_g \colon \D(g) \rightarrow \L$, $x \mapsto x^g = \Pi(g^{-1}, x, g)$ is the \emph{conjugation map} by $g$. If $\H$ is a subset of $\L$ and $\H \subseteq \D(g)$, then we set 
\[\H^g = \{ h^g \mid h \in \H\}.\]
Whenever we write $x^g$ (or $\H^g$), we mean implicitly that $x\in\D(g)$ (or $\H\subseteq \D(g)$, respectively). Moreover, if $\M$ and $\H$ are subsets of $\L$, we write $N_\M(\H)$ for the set of all $g\in\M$ such that $\H\subseteq\D(g)$ and $\H^g=\H$. Similarly, we write $\C_\M(\H)$ for the set of all $g\in\M$ such that $\H\subseteq\D(g)$ and $h^g=h$ for all $h\in\H$.
\end{definition}

\begin{definition}
Let $\L$ be a partial group and let $\H$ be a non-empty subset of $\L$. The subset $\H$ is a \emph{partial subgroup} of $\L$ if
\begin{enumerate}
\item $g\in \H \Longrightarrow g^{-1} \in \H$; and
\item $w \in \D(\L) \cap \W(\H) \Longrightarrow \Pi(w) \in \H$.
\end{enumerate}
If $\H$ is a partial subgroup of $\L$ with $\W(\H)\subseteq\D(\L)$, then $\H$ is called a \emph{subgroup} of $\L$. 

\smallskip

A partial subgroup $\N$ of $\L$ is called a \emph{partial normal subgroup} of $\L$ (denoted $\N \norm \L$) if 
\[ g\in \L, n \in \N \cap \D(g) \Longrightarrow n^g \in \N.\]
\end{definition}

We remark that a subgroup $\H$ of $\L$ is always a group in the usual sense with the group multiplication defined by $hg=\Pi(h,g)$ for all $h,g\in\H$.

\subsection{Localities}

Roughly speaking, localities are partial groups with some some extra structure, in particular with a ``Sylow $p$-subgroup'' and a set $\Delta$ of ``objects'' which in a sense determines the domain of the product. Crucial is the following definition.

\begin{definition}
Let $\L$ be a partial group and let $\Delta$ be a collection of subgroups of $\L$. Define $\D(\L)_\Delta$ to be the set of words $w=(g_1, \dots, g_k) \in \W(\L)$ such that there exist $P_0, \dots ,P_k \in \Delta$ with $P_{i-1} \subseteq \D(g_i)$ and $P_{i-1}^{g_i} = P_i$ for all $1 \leq  i \leq k$; if such $P_0,\dots,P_k$ are given, then we say also that $w\in\D(\L)_\Delta$ via $P_0,P_1,\dots,P_k$, or just that $w\in\D(\L)_\Delta$ via $P_0$.
\end{definition}

\begin{remark}\label{R:DDelta}
Let $\L$ be a partial group and let $\Delta$ and $\Delta^*$ be sets of subgroups of $\L$. If $\Delta\subseteq\Delta^*$, then $\D(\L)_{\Delta}\subseteq\D(\L)_{\Delta^*}$. 

\smallskip

More generally, if there exists a map $\gamma\colon \Delta\rightarrow \Delta^*$ such that, for all $P,Q\in\Delta$ and $f\in\L$, we have
\[P\subseteq \D(f)\mbox{ and }P^f=Q\Longrightarrow \gamma(P)\subseteq \D(f)\mbox{ and }\gamma(P)^f=\gamma(Q),\]
then $\D(\L)_{\Delta}\subseteq\D(\L)_{\Delta^*}$. For, if $\gamma$ is such a map and $w=(f_1,\dots,f_n)\in\D(\L)_{\Delta}$ via $P_0,P_1,\dots,P_n\in\Delta$, then $w\in\D(\L)_{\Delta^*}$ via $\gamma(P_0),\gamma(P_1),\dots,\gamma(P_n)$.  
\end{remark}

\begin{definition}\label{locality}
Let $\L$ be a finite partial group, let $S$ be a $p$-subgroup of $\L$ and let $\Delta$ be a non-empty set of subgroups of $S$. We say that $(\L, \Delta, S)$ is a \textbf{locality} if the following hold:
\begin{enumerate}
\item $S$ is maximal with respect to inclusion among the $p$-subgroups of $\L$;
\item $\D(\L) = \D(\L)_\Delta$;
\item $\Delta$ is closed under taking $\L$-conjugates and overgroups in $S$; i.e. if $P \in \Delta$ then $P^g\in \Delta$ for every $g\in \L$ with $P \subseteq \D(g)$, and $R\in\Delta$ for every $P \leq R \leq S$.
\end{enumerate}
\end{definition}

We remark that Definition \ref{locality} is the definition of a locality given by the second author \cite[Definition 5.1]{subcentric}. It is shown there that this definition is equivalent to the one given by Chermak \cite[Definition 2.8]{loc1}.

\smallskip

We will later be in a situation where we are given a partial group $\L$ with a maximal $p$-subgroup $S$, and want to show that $(\L,\Delta,S)$ is a locality for a suitable set $\Delta$ of subgroups of $S$. We will now develop some general methods for proving this. We will need the following definition.

\begin{definition}
Let $\L$ be a partial group and let $S$ be a $p$-subgroup of $\L$. For $f\in\L$ set
\[S_f:=\{x\in S\colon x\in\D(f)\mbox{ and }x^f\in S\}.\]
We say that a set $\Delta$ of subgroups of $S$ is closed under taking $\L$-conjugates in $S$ if, for every $P\in\Delta$ and every $f\in\L$ with $P\subseteq S_f$, we have $P^f\in\Delta$.
\end{definition}

If $(\L,\Delta,S)$ is a locality, then for every $f\in\L$, the subset $S_f$ is actually a subgroup of $S$. Moreover, $P^f$ is a subgroup of $S$ for every subgroup $P$ of $S$ with $P\subseteq S_f$. We warn the reader that, if $\L$ is an arbitrary partial group with a maximal $p$-subgroup $S$, then its a priori not clear that these properties hold. In the proofs of the following results we therefore need to be very careful how we argue.

\begin{lemma}\label{L:ConstructLocality0}
 Let $\L$ be a partial group with product $\Pi\colon\D(\L)\rightarrow \L$. Let $\Delta$ be a set of subgroups of $\L$ such that $\D(\L)=\D(\L)_\Delta$. Then the following hold:
\begin{itemize}
 \item [(a)] For every $P\in\Delta$, $N_\L(P)$ is a subgroup of $\L$.
 \item [(b)] If $P\in\Delta$ and $f\in\L$ with $P\subseteq S_f$ and $P^f\in\Delta$, then $N_\L(P)\subseteq \D(f)$ and $c_f\colon N_\L(P)\rightarrow N_\L(P^f)$ is an isomorphism of groups.
 \item [(c)] If $w=(f_1,\dots,f_n)\in\D(\L)$ via $P_0,P_1,\dots,P_n\in\Delta$, then $c_{\Pi(w)}=c_{f_1}\circ c_{f_2}\circ \cdots \circ c_{f_n}$ as an isomorphism from $N_\L(P_0)$ to $N_\L(P_1)$.
\end{itemize}
\end{lemma}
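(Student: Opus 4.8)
The plan is to reduce everything to two purely partial-group-theoretic facts about conjugation and then, in each of (a), (b), (c), exhibit an explicit $\Delta$-chain witnessing membership in $\D(\L)=\D(\L)_\Delta$. The first fact is that \emph{conjugation can be reversed}: for every $g\in\L$ and every $x\in\D(g)$ one has $x^g\in\D(g^{-1})$ and $(x^g)^{g^{-1}}=x$. Indeed, starting from $(g^{-1},x,g)\in\D(\L)$, two applications of Lemma~\ref{L:PartialGroup}(b) give $(g,g^{-1},x,g,g^{-1})\in\D(\L)$ with product $\Pi(g,g^{-1},x)$, and this product equals $x$ by (PG4), (PG2) and Lemma~\ref{L:PartialGroup}(d); now (PG3) applied to the inner subword $(g^{-1},x,g)$ rewrites the word as $(g,x^g,g^{-1})\in\D(\L)$ with product $x$, which is exactly the claim. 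As an immediate consequence, if $P$ is a subgroup of $\L$, $f\in\L$ and $P\subseteq\D(f)$, then $P^f\subseteq\D(f^{-1})$, $(P^f)^{f^{-1}}=P$, and $c_f$, $c_{f^{-1}}$ restrict to mutually inverse bijections between $P$ and $P^f$.

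The second fact is a \emph{chain evaluation} statement: if $w=(f_1,\dots,f_n)\in\D(\L)$ via $Q_0,\dots,Q_n\in\Delta$ and $g\in N_\L(Q_0)$, then $g\in\D(\Pi(w))$ and $g^{\Pi(w)}=(\cdots(g^{f_1})^{f_2}\cdots)^{f_n}$; in particular $Q_0\subseteq\D(\Pi(w))$ and $Q_0^{\Pi(w)}=Q_n$. To see this, note that $W:=w^{-1}\circ(g)\circ w$ lies in $\D(\L)_\Delta$ via the chain $Q_n,\dots,Q_1,Q_0,Q_0,Q_1,\dots,Q_n$: along the first half the conjugation relations $Q_i^{f_i^{-1}}=Q_{i-1}$ come from the consequence of the first fact, the middle step uses $Q_0^g=Q_0$ (as $g\in N_\L(Q_0)$), and the second half is the given chain for $w$. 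Applying (PG3) to the two halves $w^{-1}$ and $w$, together with Lemma~\ref{L:PartialGroup}(g), identifies $\Pi(W)$ with $\Pi(\Pi(w)^{-1},g,\Pi(w))=g^{\Pi(w)}$ (so $g\in\D(\Pi(w))$), while peeling off the innermost triples $(f_i^{-1},\cdot,f_i)$ one at a time via (PG3) identifies $\Pi(W)$ with the iterated conjugate. Running the same argument for $w^{-1}$ and invoking the first fact once more upgrades $Q_0^{\Pi(w)}\subseteq Q_n$ to an equality.

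Given these two facts, the three assertions follow. For (a): every $w\in\W(N_\L(P))$ lies in $\D(\L)_\Delta$ via the constant chain $(P,\dots,P)$, so $\W(N_\L(P))\subseteq\D(\L)$; moreover $\One\in N_\L(P)$, $N_\L(P)$ is closed under inversion by the consequence of the first fact (applied to $P$ itself), and chain evaluation with the constant chain $(P,\dots,P)$ shows $\Pi(w)\in N_\L(P)$, so $N_\L(P)$ is a subgroup. For (b): by (a), $N_\L(P)$ and $N_\L(P^f)$ are groups; since $P\subseteq S_f$ gives $P\subseteq\D(f)$, for each $g\in N_\L(P)$ the word $(f^{-1},g,f)$ lies in $\D(\L)_\Delta$ via $(P^f,P,P,P^f)$, whence $N_\L(P)\subseteq\D(f)$, and chain evaluation shows $c_f$ maps $N_\L(P)$ into $N_\L(P^f)$; the identity $(g_1g_2)^f=g_1^fg_2^f$ follows by rewriting both sides to $\Pi(f^{-1},g_1,g_2,f)$ using (PG3) and Lemma~\ref{L:PartialGroup}(h) (the intermediate words lying in $\D(\L)$ via explicit chains with entries in $\{P,P^f\}$), so $c_f$ is a homomorphism; applying the same to $(P^f,f^{-1})$ — legitimate because $P^f\subseteq\D(f^{-1})$ by the first fact — yields a homomorphism $c_{f^{-1}}\colon N_\L(P^f)\to N_\L(P)$, and the first fact shows $c_f$ and $c_{f^{-1}}$ are mutually inverse, so $c_f$ is an isomorphism of groups. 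For (c): the argument of (b) applies verbatim to each pair $(P_{i-1},f_i)$ — only $P_{i-1}\subseteq\D(f_i)$ and $P_{i-1}^{f_i}=P_i\in\Delta$ are used — so each $c_{f_i}\colon N_\L(P_{i-1})\to N_\L(P_i)$ is a group isomorphism, hence so is $c_{f_1}\circ\cdots\circ c_{f_n}\colon N_\L(P_0)\to N_\L(P_n)$; by chain evaluation the restriction of $c_{\Pi(w)}$ to $N_\L(P_0)$ agrees with this composite, which is the assertion (with $N_\L(P_n)$ in place of the $N_\L(P_1)$ of the statement, the two coinciding when $n=1$).

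The step I expect to be the real obstacle is the first fact — coaxing conjugation into undoing itself using nothing but the partial-group axioms and Lemma~\ref{L:PartialGroup}. This is precisely the difficulty flagged in the paragraph preceding Lemma~\ref{L:ConstructLocality0}: a priori we do not know that $S_f$ is a subgroup or that $P^f$ is even a group, so none of the usual locality-theoretic conveniences are available, and one must work carefully at the level of words. Once the first fact (and hence chain evaluation) is in place, everything else is the routine but bookkeeping-heavy task of writing down the correct $\Delta$-chains and applying (PG3) and Lemma~\ref{L:PartialGroup}.
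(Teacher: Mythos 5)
Your proposal is correct and follows essentially the same strategy as the paper: establish that $c_g$ and $c_{g^{-1}}$ are mutually inverse, prove a chain-evaluation claim (the paper's auxiliary statement $(*)$, which it phrases for arbitrary subgroups $X_i$ with $P_i\unlhd X_i$ but applies exactly as you do), and then derive (a), (b), (c) by exhibiting the same explicit $\Delta$-chains, including the word $(f^{-1},x,f,f^{-1},y,f)$ for the homomorphism property in (b). The only real difference is that the paper simply cites \cite[Lemma~1.6(c)]{loc1} for the reversibility of conjugation, whereas you re-derive it from the axioms; your derivation is valid, and your observation that the codomain in (c) should read $N_\L(P_n)$ rather than $N_\L(P_1)$ is a correct reading of a typo in the statement.
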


\begin{proof}
By \cite[Lemma~1.6(c)]{loc1}, for any $g\in\L$, the conjugation map $c_g$ is a bijection $\D(g)\rightarrow \D(g^{-1})$ with inverse map $c_{g^{-1}}$. We will use this property throughout without further reference. We will first prove the following property from which (a) and (b) follow easily:

\begin{itemize}
 \item [(*)] Let $w=(f_1,\dots,f_n)\in\D$ via $P_0,P_1,\dots,P_n\in\Delta$, and suppose $X_0,X_1,\dots,X_n$ are subgroups of $\L$ with $P_i\unlhd X_i$ for $i=0,1,\dots,n$. Assume furthermore $X_{i-1}\subseteq \D(f_i)$ and $X_{i-1}^{f_i}=X_i$ for $i=1,\dots,n$. We show that $c_{\Pi(w)}=c_{f_1}\circ c_{f_2}\circ\cdots\circ c_{f_n}$ as a map from $X_0$ to $X_n$. 
\end{itemize}

\smallskip

To prove this choose $x\in X_0$ and notice that $w^{-1}\circ (x)\circ w\in\D=\D_\Delta$ via $P_n,P_{n-1},\dots,P_0,P_0,P_1,\dots,P_n$. Moreover, by Lemma~\ref{L:PartialGroup}(g), we have $\Pi(w)^{-1}=\Pi(w^{-1})$. 
Using axiom (PG3) of a partial group several times, we get thus $(\Pi(w)^{-1},x,\Pi(w))=(\Pi(w^{-1}),x,\Pi(w))\in\D$ and $x^{c_{\Pi(w)}}=\Pi(\Pi(w^{-1}),x,\Pi(w))=\Pi(w^{-1}\circ (x)\circ w)=\Pi(f_n^{-1},\dots,f_1^{-1},x,f_1,\dots,f_n)=x^{c_{f_1}\circ c_{f_2}\circ \cdots\circ c_{f_n}}$. This proves (*).

\smallskip

If (a) and (b) are true, then (c) follows immediately from (*) applied with $X_i:=N_\L(P_i)$ for $i=0,1,\dots,n$. So it remains to prove (a) and (b). 

\smallskip

Let $P\in\Delta$. For any $g\in N_\L(P)$, we have $g^{-1}\in N_\L(P)$ as $c_{g^{-1}}=(c_g)^{-1}$. Moreover, if $w=(f_1,\dots,f_n)\in\W(N_\L(P))$, then $w\in\D=\D_\Delta$ via $P,P,\dots,P$ and, by (*) applied with $P_i:=X_i:=P$ for $i=0,\dots,n$, we have $\Pi(w)\in N_\L(P)$. Hence, $N_\L(P)$ is a subgroup of $\L$ and we have proved (a).

\smallskip

Let now $f\in\L$ with $P\subseteq\D(f)$ and $P^f\in\Delta$. Then for $x\in N_\L(P)$, we have $(f^{-1},x,f)\in\D$ via $P^f,P,P,P^f$. So $x\in\D(f)$ and, by (*), $x^f\in N_\L(P^f)$. Thus $c_f$ induces a map $N_\L(P)\rightarrow N_\L(P^f)$. Applying this property with $P^f$ and $f^{-1}$ in the roles of $P$ and $f$, we get that $c_{f^{-1}}$ induces a map $N_\L(P^f)\rightarrow N_\L(P)$. Since $c_f\colon \D(f)\rightarrow \D(f^{-1})$ is bijective with inverse map $c_{f^{-1}}$, it follows that $c_f$ induces a bijection $N_\L(P)\rightarrow N_\L(P^f)$. If $x,y\in N_\L(P)$, then $u=(f^{-1},x,f,f^{-1},y,f)\in\D$ via $P^f,P,P,P^f,P,P,P^f$. So using axiom (PG3) and Lemma~\ref{L:PartialGroup}(h), we conclude $(xy)^{c_f}=\Pi(f^{-1},x,y,f)=\Pi(u)=\Pi(x^f,y^f)=(x^{c_f})(y^{c_f})$. Hence, $c_f$ is a homomorphism of groups and we have proved (c). 
\end{proof}

\begin{lemma}\label{L:ConstructLocality1}
Let $\L$ be a partial group and let $S$ be a maximal $p$-subgroup of $\L$. Suppose $\Delta_0$ is a set of subgroups of $S$ such that $\D(\L)=\D(\L)_{\Delta_0}$ and $\Delta_0$ is closed under taking $\L$-conjugates in $S$. 
\begin{itemize}
\item [(a)] Let $\Delta$ be a set of subgroups of $S$ such that $\Delta_0\subseteq\Delta$ and, for every $P\in\Delta$, there exists $Q\in\Delta_0$ with $Q\leq P$. Then $\D(\L)=\D(\L)_{\Delta}$.
\item [(b)] Set $\Delta:=\{P\leq S\colon \exists Q\in\Delta_0\mbox{ with }Q\unlhd P\}$. Then $\Delta$ is closed under taking $\L$-conjugates in $S$.
\end{itemize}
\end{lemma}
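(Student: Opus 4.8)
The plan is to prove (a) by propagating a chosen small subgroup along a word, and (b) by passing to the genuine group $N_\L(Q)$ on which conjugation is a bona fide group isomorphism.

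For (a): Since $\Delta_0\subseteq\Delta$, Remark~\ref{R:DDelta} gives $\D(\L)=\D(\L)_{\Delta_0}\subseteq\D(\L)_\Delta$, so it remains to prove $\D(\L)_\Delta\subseteq\D(\L)$. Take $w=(f_1,\dots,f_n)\in\D(\L)_\Delta$ via $P_0,P_1,\dots,P_n\in\Delta$ and, using the hypothesis on $\Delta$, pick $Q_0\in\Delta_0$ with $Q_0\leq P_0$. I would then set $Q_i:=Q_{i-1}^{f_i}$ and prove by induction on $i$ that $Q_i\in\Delta_0$, that $Q_i\leq P_i$, and that $Q_{i-1}\subseteq\D(f_i)$. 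For the inductive step, $Q_{i-1}\leq P_{i-1}\subseteq\D(f_i)$ gives $Q_{i-1}\subseteq\D(f_i)$, and $Q_{i-1}^{f_i}\subseteq P_{i-1}^{f_i}=P_i\leq S$ gives $Q_{i-1}\subseteq S_{f_i}$ (recall $Q_{i-1}\leq S$ as $Q_{i-1}\in\Delta_0$); since $\Delta_0$ is closed under $\L$-conjugates in $S$, it follows that $Q_i=Q_{i-1}^{f_i}\in\Delta_0$, whence $Q_i$ is a subgroup of $S$ contained in $P_i$ and so $Q_i\leq P_i$. Thus $w\in\D(\L)_{\Delta_0}$ via $Q_0,\dots,Q_n$, hence $w\in\D(\L)$.

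For (b): Let $P\in\Delta$ and $f\in\L$ with $P\subseteq S_f$, and pick $Q\in\Delta_0$ with $Q\unlhd P$. Exactly as above, $Q\leq P\subseteq\D(f)$ and $Q^f\subseteq P^f\subseteq S$ show $Q\subseteq S_f$, so $Q^f\in\Delta_0$ by closedness of $\Delta_0$. It remains to show that $P^f$ is a subgroup of $S$ with $Q^f\unlhd P^f$. The key observation is that $P\leq N_\L(Q)$: for $g\in P$ and $q\in Q$ the word $(g^{-1},q,g)$ lies in $\W(P)\subseteq\D(\L)$, so $q\in\D(g)$, and $\Pi(g^{-1},q,g)$ coincides with the conjugate of $q$ by $g$ inside the group $P$, which lies in $Q$ because $Q\unlhd P$; hence $Q^g=Q$ and $g\in N_\L(Q)$. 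By Lemma~\ref{L:ConstructLocality0}(a), $N_\L(Q)$ is a subgroup of $\L$, and by Lemma~\ref{L:ConstructLocality0}(b), applied with $Q$ and $f$ in the roles of $P$ and $f$ (valid since $Q\in\Delta_0$, $Q\subseteq S_f$, $Q^f\in\Delta_0$), the map $c_f$ restricts to a group isomorphism $N_\L(Q)\to N_\L(Q^f)$. Therefore $P^f=c_f(P)$ is a subgroup of $N_\L(Q^f)$, hence of $\L$; being contained in $S$, it is a subgroup of $S$; and $Q^f=c_f(Q)\unlhd c_f(P)=P^f$ since group isomorphisms preserve normality. As $Q^f\in\Delta_0$, this shows $P^f\in\Delta$.

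The only real subtlety — and the reason the argument must be routed through Lemma~\ref{L:ConstructLocality0} rather than argued naively — is that for an arbitrary partial group with maximal $p$-subgroup $S$ it is not clear a priori that $P^f$ is even a subgroup when $P\leq S$ and $P\subseteq\D(f)$. In (b) this obstacle is removed by realizing $P$ inside the actual group $N_\L(Q)$, on which $c_f$ is a genuine group isomorphism; in (a) it never arises, because the closedness hypothesis forces every $Q_i$ into $\Delta_0$, which consists of bona fide subgroups of $S$. Everything else is routine bookkeeping.
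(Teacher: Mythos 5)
Your proof is correct and follows essentially the same route as the paper's: propagating $Q_0\in\Delta_0$ along the word via the closedness of $\Delta_0$ for part (a), and routing part (b) through Lemma~\ref{L:ConstructLocality0} applied to $N_\L(Q)$ so that $c_f$ becomes a genuine group isomorphism. The only difference is that you explicitly verify $P\leq N_\L(Q)$, a point the paper asserts without comment; your justification of it is correct.
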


\begin{proof}
(a) Let $\Delta$ be as in (a). As $\Delta_0\subseteq\Delta$, we have clearly $\D(\L)=\D(\L)_{\Delta_0}\subseteq\D(\L)_{\Delta}$. Let now $w=(f_1,\dots,f_n)\in\D(\L)_{\Delta}$ via some elements $P_0,P_1,\dots,P_n\in\Delta$. By assumption, there exists $Q_0\in\Delta_0$ with $Q_0\leq P_0$. For $i=1,\dots,n$ define $Q_i$ recursively by $Q_i:=Q_{i-1}^{f_i}$. Note that $Q_i\subseteq P_{i-1}^{f_i}=P_i\leq S$ and thus $Q_{i-1}\subseteq S_{f_i}$ for $i=1,\dots,n$. As $\Delta_0$ is closed under taking $\L$-conjugates in $S$ and $Q_0\in\Delta_0$, it follows that by induction that $Q_0,Q_1,\dots,Q_n\in\Delta_0$. Hence, $w\in\D(\L)_{\Delta_0}=\D(\L)$ via $Q_0,Q_1,\dots,Q_n$. This proves (a).

\smallskip

(b) Let now $\Delta$ be as in (b). Pick $P\in\Delta$ and $f\in\L$ with $P\subseteq S_f$. By definition of $\Delta$, there exists $Q\in\Delta_0$ such that $Q\unlhd P$. We apply now Lemma~\ref{L:ConstructLocality0} with $\Delta_0$ in place of $\Delta$. By part (a) of that lemma, $N_\L(Q)$ is a subgroup of $\L$. As $\Delta_0$ is closed under taking $\L$-conjugates in $S$ and $Q\leq P\subseteq S_f$, we have $Q^f\in\Delta_0$. So by Lemma~\ref{L:ConstructLocality0}(b), $c_f\colon N_\L(Q)\rightarrow N_\L(Q^f)$ is an isomorphism of groups. In particular, as $P$ is a subgroup of $N_\L(Q)$, $P^f$ is a subgroup of $N_\L(Q^f)$ and thus also of $S$. Moreover, $Q^f\unlhd P^f$ and thus $P^f\in\Delta$. This shows that $\Delta$ is closed under taking $\L$-conjugates in $S$.
\end{proof}

\begin{lemma}\label{L:ConstructLocality}
 Let $\L$ be a partial group and let $S$ be a maximal $p$-subgroup of $\L$. Suppose $\Delta_0$ is a set of subgroups of $S$ which is closed under taking $\L$-conjugates. Assume $\D(\L)=\D(\L)_{\Delta_0}$. Set
\[\Delta:=\{P\leq S\colon\exists Q\in\Delta_0\mbox{ with }Q\leq P\}.\]
Then $(\L,\Delta,S)$ is a locality.
\end{lemma}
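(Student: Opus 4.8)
The plan is to verify the three conditions in Definition~\ref{locality} for the triple $(\L,\Delta,S)$. Condition~(1) is immediate: $S$ is a maximal $p$-subgroup of $\L$ by hypothesis. The real content lies in conditions~(2) and~(3), and the strategy is to reduce both of them to the two preceding lemmas. The key observation is that the set $\Delta$ defined here is exactly the set obtained by first passing from $\Delta_0$ to $\Delta_1:=\{P\leq S\colon\exists Q\in\Delta_0\text{ with }Q\unlhd P\}$ and then enlarging to all overgroups in $S$; equivalently, $\Delta=\{P\le S\colon\exists Q\in\Delta_0\text{ with }Q\le P\}$ already contains $\Delta_0$ and is closed under overgroups in $S$ by construction, since $Q\le P\le R\le S$ forces $R\in\Delta$.

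First I would establish condition~(2), that $\D(\L)=\D(\L)_\Delta$. Since $\Delta_0$ is closed under taking $\L$-conjugates (the stronger, full notion), it is in particular closed under taking $\L$-conjugates \emph{in} $S$: if $P\in\Delta_0$ and $f\in\L$ with $P\subseteq S_f$, then $P\subseteq\D(f)$, so $P^f$ is defined and lies in $\Delta_0$ by the hypothesis, and $P^f\le S$ as well. Thus the hypotheses of Lemma~\ref{L:ConstructLocality1} are met with this $\Delta_0$. Now $\Delta$ as defined here satisfies $\Delta_0\subseteq\Delta$, and for every $P\in\Delta$ there is by definition some $Q\in\Delta_0$ with $Q\le P$; these are precisely the hypotheses on $\Delta$ in Lemma~\ref{L:ConstructLocality1}(a). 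Applying that lemma gives $\D(\L)=\D(\L)_\Delta$, which is condition~(2).

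Next I would establish condition~(3), which has two parts. Closure under overgroups in $S$ is built into the definition of $\Delta$: if $P\in\Delta$ with witness $Q\in\Delta_0$, $Q\le P$, and $P\le R\le S$, then $Q\le R\le S$, so $R\in\Delta$. For closure under $\L$-conjugates, let $P\in\Delta$ and $g\in\L$ with $P\subseteq\D(g)$; we must show $P^g\in\Delta$. Here I would invoke Lemma~\ref{L:ConstructLocality0}, applied with $\Delta_0$ in the role of $\Delta$ (legitimate since $\D(\L)=\D(\L)_{\Delta_0}$). Let $Q\in\Delta_0$ with $Q\le P$. The subtlety is that a priori $P^g$ need not be a subgroup of $S$, so I cannot directly say $P\subseteq S_g$; instead I would argue via normalizers. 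Hmm --- but the cleanest route is: $Q\le P\subseteq\D(g)$, so $Q\subseteq\D(g)$; since $\Delta_0$ is closed under $\L$-conjugates, $Q^g\in\Delta_0$, and as $\Delta_0$ consists of subgroups of $S$, $Q^g\le S$, whence $Q\subseteq S_g$. By Lemma~\ref{L:ConstructLocality0}(b), $c_g\colon N_\L(Q)\to N_\L(Q^g)$ is an isomorphism of groups. Since $P$ is a subgroup of $\L$ contained in $N_\L(Q)$ (as $Q\unlhd P$... wait, we only have $Q\le P$, not $Q\unlhd P$), I need to be careful: $P$ normalizes $Q$ only if $Q\unlhd P$. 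This is the main obstacle, and I expect it to be resolved by first passing to the intermediate set $\Delta_1:=\{P\le S\colon\exists Q\in\Delta_0\text{ with }Q\unlhd P\}$: by Lemma~\ref{L:ConstructLocality1}(b), $\Delta_1$ is closed under $\L$-conjugates in $S$, and by Lemma~\ref{L:ConstructLocality1}(a), $\D(\L)=\D(\L)_{\Delta_1}$. Then $\Delta=\{P\le S\colon\exists Q\in\Delta_1\text{ with }Q\le P\}$, and now each $P\in\Delta$ has a witness $Q\in\Delta_1$ with $Q\le P$; choosing a witness $Q'\in\Delta_0$ with $Q'\unlhd Q$, I apply Lemma~\ref{L:ConstructLocality0}(b) to $Q'$, get that $c_g\colon N_\L(Q')\to N_\L(Q'^g)$ is a group isomorphism, and since $Q\le N_\L(Q')$ (because $Q'\unlhd Q$), the image $Q^g=Q^{c_g}$ is a subgroup of $N_\L(Q'^g)$, hence of $S$; likewise $P^g$ is a subgroup of $S$ with $Q^g\le P^g$ and $Q^g\in\Delta_1$ (using closure of $\Delta_1$ under $\L$-conjugates in $S$, valid since $Q\subseteq S_g$). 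Therefore $P^g\in\Delta$, establishing closure under $\L$-conjugates and completing the verification of condition~(3), hence of the locality axioms.
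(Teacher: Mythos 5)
Your overall strategy --- verifying the three conditions of Definition~\ref{locality} by means of Lemmas~\ref{L:ConstructLocality0} and \ref{L:ConstructLocality1} --- is the same as the paper's, and your treatment of condition~(1), of condition~(2) via a direct application of Lemma~\ref{L:ConstructLocality1}(a), and of closure under overgroups is correct. You also correctly identify the central obstacle: a witness $Q\in\Delta_0$ with $Q\leq P$ need not be \emph{normal} in $P$, so one cannot place $P$ inside $N_\L(Q)$ and transport it by the group isomorphism of Lemma~\ref{L:ConstructLocality0}(b).

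However, your resolution of that obstacle has a genuine gap. Passing once from $\Delta_0$ to $\Delta_1=\{P\leq S\colon\exists Q\in\Delta_0\mbox{ with }Q\unlhd P\}$ merely moves the problem up one level: a general $P\in\Delta$ has a witness $Q\in\Delta_1$ with $Q\leq P$, and again $Q$ need not be normal in $P$. Concretely, in your final step you choose $Q'\in\Delta_0$ with $Q'\unlhd Q$ and use $Q\leq N_\L(Q')$ to conclude that $Q^g$ is a subgroup; but then ``likewise $P^g$ is a subgroup of $S$'' does not follow, because $P$ need not normalize $Q'$ (only $Q$ does), so $P\not\subseteq N_\L(Q')$ and the isomorphism $c_g\colon N_\L(Q')\rightarrow N_\L((Q')^g)$ says nothing about $P$. (Two further small problems: $N_\L((Q')^g)$ is not contained in $S$, so ``$Q^g$ is a subgroup of $N_\L((Q')^g)$, hence of $S$'' needs a separate argument; and the assertion $Q\subseteq S_g$ presupposes $Q^g\subseteq S$, which is part of what is being proved.) The paper closes exactly this gap by iterating: it defines $\Delta_i=\{P\leq S\colon\exists Q\in\Delta_{i-1}\mbox{ with }Q\unlhd P\}$ recursively and observes that, since $Q\leq P\leq S$ are $p$-groups, $Q$ is \emph{subnormal} in $P$ with defect bounded by $|S|$, so $\Delta_n=\Delta$ for some $n$; Lemma~\ref{L:ConstructLocality1} then applies inductively at every step, each time with a genuinely normal witness. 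Your argument becomes correct once you replace the single step from $\Delta_0$ to $\Delta_1$ by this finite chain.
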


\begin{proof}
Note that $\Delta$ is by construction closed under taking overgroups in $S$. Since $S$ is by assumption a maximal $p$-subgroup of $\L$, it remains thus to show that $\Delta$ is closed under taking $\L$-conjugates, and that $\D(\L)=\D(\L)_{\Delta}$. Since $\Delta_0$ is given, we can defined sets $\Delta_i$ for $i\geq 1$ recursively by 
\[\Delta_i:=\{P\leq S\colon \exists Q\in\Delta_{i-1}\mbox{ with }Q\unlhd P\}.\]
If $Q\in\Delta_0$ and $Q\leq P\leq S$, then $Q$ is subnormal in $P$, and the subnormal length is bounded by $|S|$. Hence, there exists $n\in\N$ with $\Delta_n=\Delta$. Therefore, it is sufficient to prove that, for all $i\geq 0$, $\D(\L)=\D(\L)_{\Delta_i}$ and $\Delta_i$ is closed under taking $\L$-conjugates in $S$. Using induction on $i$, this follows however immediately from Lemma~\ref{L:ConstructLocality1} and the fact that the claim is by assumption true for $i=0$.
\end{proof}

In Section~\ref{S:SemidirectProdLoc}, we will consider substructures of localities which are localities again, as introduced in the following definition.

\begin{definition}
We say that $(\H,\Delta_\H,S_\H)$ is a \textit{sublocality} of $(\L,\Delta,S)$ if $\H$ is a partial subgroup of $\L$, $S_\H=S\cap \H$, $\Delta_\H$ is a set of subgroups of $S_\H$ and, regarding $\H$ as a partial group with product $\Pi|_{\W(\H)\cap\D(\L)}$, the triple $(\H,\Delta_\H,S_\H)$ forms a locality. 
\end{definition}

We stress that, in the above definition, $\Delta_\H$ is not assumed to be a subset of $\Delta$. Such a condition would be too restrictive for our purposes as will become clear in Section~\ref{S:SemidirectProdLoc}.

\subsection{Homomorphisms of partial groups.}

\begin{definition}\label{D:PartialHom}
Let $\L$ and $\L'$ be partial groups, let $\varphi \colon \L \rightarrow \L',g\mapsto g^\phi$ be a mapping. By abuse of notation, let $\varphi$ also denote the induced map on words 
\[\W(\L) \rightarrow \W(\L'),\quad w=(f_1,\dots,f_n)\mapsto w^{\phi}=(f_1^\phi,\dots,f_n^\phi).\]
Accordingly, set $\D(\L)^\phi=\{w^\phi\colon w\in\D(\L)$. We say that $\varphi$ is a homomorphism of partial groups if 
\begin{enumerate}
\item $\D(\L)^{\varphi} \subseteq \D(\L')$; and 
\item $\Pi(w)^\varphi = \Pi'(w^\phi)$ for every $w \in \D(\L)$.
\end{enumerate}
If moreover $\varphi$ is bijective and $\D(\L)^\phi = \D(\L')$, then we say that $\varphi$ is an isomorphism of partial groups. The isomorphisms of partial groups from $\L$ to itself are called \emph{automorphisms} and the set of these automorphisms is denoted by $\Aut(\L)$. 
\end{definition}

\begin{notation}
Whenever $\phi\colon\L\rightarrow\L'$ is a homomorphism of partial groups, then (as in Definition~\ref{D:PartialHom}) by abuse of notation, we will denote by $\phi$ also the induced map on words
\[\W(\L) \rightarrow \W(\L'),\quad w=(f_1,\dots,f_n)\mapsto w^{\phi}=(f_1^\phi,\dots,f_n^\phi).\]
\end{notation}

\begin{lemma}\label{L:PartialHom}
If  $\varphi \colon \L \rightarrow \L'$ is a homomorphism of partial groups, then the following hold:
\begin{itemize}
 \item [(a)] $(g^{-1})^\varphi = (g^\varphi)^{-1}$ for every $g \in \L$.
 \item [(b)] If $u,v\in\W(\L)$ with $u\circ v\in\D(\L)$, then $\Pi(u\circ v)^\phi=\Pi'(\Pi(u)^\phi,\Pi(v)^\phi)$ where $\Pi$ and $\Pi'$ denote the partial products on $\L$ and $\L'$ respectively.
 \item [(c)] We have $\One^\phi=\One'$.
\end{itemize}
\end{lemma}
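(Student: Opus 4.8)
The plan is to prove the three parts of Lemma~\ref{L:PartialHom} in order, using only the defining axioms of a homomorphism of partial groups (Definition~\ref{D:PartialHom}) together with Lemma~\ref{L:PartialGroup}.

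For part~(a), I would start from the fact that $(g^{-1},g)\in\D(\L)$ by axioms (PG1) and (PG4), with $\Pi(g^{-1},g)=\One$. Applying the homomorphism, condition (i) of Definition~\ref{D:PartialHom} gives $(g^{-1},g)^\phi=((g^{-1})^\phi,g^\phi)\in\D(\L')$, and condition (ii) gives $\One^\phi=\Pi'((g^{-1})^\phi,g^\phi)$. Running the same argument with $g$ replaced by $g^{-1}$ (recall $(-)^{-1}$ is involutory) yields $\One^\phi=\Pi'(g^\phi,(g^{-1})^\phi)$ as well. Since $\L'$ is a partial group with $(g^\phi)^{-1}$ the unique inverse of $g^\phi$ in the group-theoretic sense, these two relations force $(g^{-1})^\phi=(g^\phi)^{-1}$; concretely, one can argue via \cite[Lemma~1.4]{loc1} or directly from (PG3)/(PG4) that a one-sided inverse in a partial group is the formal inverse. (Care is needed here because part~(c), that $\One^\phi=\One'$, is proved only afterwards, so at this stage one should phrase the argument using the element $\One^\phi$ rather than $\One'$.)

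For part~(b), the key input is Lemma~\ref{L:PartialGroup}(a): since $u\circ v\in\D(\L)$, we have $(\Pi(u),\Pi(v))\in\D(\L)$ and $\Pi(u\circ v)=\Pi(\Pi(u),\Pi(v))$. Now apply $\phi$: by condition (ii) of Definition~\ref{D:PartialHom} applied to the word $(\Pi(u),\Pi(v))\in\D(\L)$, we get $\Pi(\Pi(u),\Pi(v))^\phi=\Pi'(\Pi(u)^\phi,\Pi(v)^\phi)$. Combining with the previous identity gives $\Pi(u\circ v)^\phi=\Pi'(\Pi(u)^\phi,\Pi(v)^\phi)$, which is exactly the claim. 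This part is essentially immediate once Lemma~\ref{L:PartialGroup}(a) is invoked.

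For part~(c), recall $\One=\Pi(\emptyset)$. Since $\emptyset\in\D(\L)$ (a consequence of (PG1)) and $\emptyset^\phi=\emptyset$, condition (ii) gives $\One^\phi=\Pi(\emptyset)^\phi=\Pi'(\emptyset^\phi)=\Pi'(\emptyset)=\One'$. The main (only genuine) obstacle is the inverse-uniqueness step in part~(a): one must be careful that in an arbitrary partial group a one-sided formal inverse relation like $\Pi'(a,g^\phi)=\One^\phi$ pins down $a=(g^\phi)^{-1}$, and one must avoid circularly using part~(c). Everything else is a direct unwinding of the definitions and of Lemma~\ref{L:PartialGroup}.
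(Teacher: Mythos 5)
Your proposal is correct. Parts (b) and (c) essentially match the paper: for (c) you give the identical computation, and for (b) you traverse the same square in the opposite order --- you apply Lemma~\ref{L:PartialGroup}(a) in $\L$ first to get $\Pi(u\circ v)=\Pi(\Pi(u),\Pi(v))$ and then push the length-two word $(\Pi(u),\Pi(v))$ through $\phi$, whereas the paper first applies the homomorphism condition to $u\circ v$ and then invokes Lemma~\ref{L:PartialGroup}(a) in $\L'$; both are one-line arguments and equally valid. The real difference is in (a): the paper simply cites \cite[Lemma 1.13]{loc1}, while you sketch a self-contained proof. Your sketch is sound but leaves the key cancellation step informal. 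Two remarks to tighten it. First, the circularity you worry about is a non-issue: part (c) does not depend on (a), so just prove (c) first and freely use $\One^\phi=\One'$. Second, the ``one-sided inverse is the formal inverse'' claim follows cleanly from Lemma~\ref{L:PartialGroup}(b) applied in $\L'$: writing $a=g^\phi$ and $b=(g^{-1})^\phi$, you have $(b,a)\in\D(\L')$ with $\Pi'(b,a)=\One'$, hence $(b,a,a^{-1})\in\D(\L')$ with $\Pi'(b,a,a^{-1})=b$, while axiom (PG3) and Lemma~\ref{L:PartialGroup}(d) give $\Pi'(b,a,a^{-1})=\Pi'(\One',a^{-1})=a^{-1}$, so $b=a^{-1}$ as required. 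With that step made explicit, your argument for (a) is a complete alternative to the external citation.
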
 

\begin{proof}
For the proof of (a) see \cite[Lemma 1.13]{loc1}. For (b) note that $\Pi(u\circ v)^\phi=\Pi((u\circ v)^{\phi})=\Pi((u^{\phi})\circ (v^{\phi}))=\Pi(\Pi(u^{\phi}),\Pi(v^{\phi}))=\Pi(\Pi(u)^\phi,\Pi(v)^\phi)$, where the third equality uses Lemma~\ref{L:PartialGroup}(a). This proves (b). Property (c) holds as  $\One^\phi=\Pi(\emptyset)^\phi=\Pi'(\emptyset^\phi)=\Pi'(\emptyset)=\One'$. 
\end{proof}

\begin{lemma}\label{L:PartialIso}
 Let $\L$ and $\L'$ be partial groups and let $\varphi\colon \L\rightarrow\L'$ be a map. Then $\phi$ is an isomorphism of partial groups if and only if $\phi$ is bijective and $\phi$ and $\phi^{-1}$ are both homomorphisms of partial groups.
\end{lemma}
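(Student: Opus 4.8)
The plan is to prove both directions of the equivalence, the forward direction being essentially trivial and the reverse direction requiring a short verification.

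For the forward direction, suppose $\varphi\colon\L\to\L'$ is an isomorphism of partial groups. Then by Definition~\ref{D:PartialHom}, $\varphi$ is a bijective homomorphism with $\D(\L)^\varphi=\D(\L')$, so it only remains to check that $\psi:=\varphi^{-1}$ is a homomorphism of partial groups. First I would observe that since $\varphi$ is bijective and the induced map on words $\W(\L)\to\W(\L')$ is then also bijective with inverse the induced map of $\psi$, the equality $\D(\L)^\varphi=\D(\L')$ gives $\D(\L')^\psi=\D(\L)$; in particular $\D(\L')^\psi\subseteq\D(\L)$, which is condition (1) for $\psi$. For condition (2), take $w'\in\D(\L')$ and write $w'=w^\varphi$ for the unique $w\in\D(\L)$ with $w\in\D(\L)$ (using that $\D(\L')^\psi=\D(\L)$). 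Then applying the homomorphism property of $\varphi$ to $w$ gives $\Pi(w)^\varphi=\Pi'(w^\varphi)=\Pi'(w')$, and applying $\psi$ to both sides yields $\Pi(w)=\Pi'(w')^\psi$; since $w=(w')^\psi$, this is exactly $\Pi'(w')^\psi=\Pi((w')^\psi)$, i.e. condition (2) for $\psi$. Hence $\psi$ is a homomorphism of partial groups.

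For the reverse direction, suppose $\varphi$ is bijective and both $\varphi$ and $\psi:=\varphi^{-1}$ are homomorphisms of partial groups. Then $\varphi$ is a bijective homomorphism, so by Definition~\ref{D:PartialHom} it suffices to show $\D(\L)^\varphi=\D(\L')$. The inclusion $\D(\L)^\varphi\subseteq\D(\L')$ is condition (1) in the definition of homomorphism applied to $\varphi$. For the reverse inclusion, let $w'\in\D(\L')$. Since $\psi$ is a homomorphism, $(w')^\psi\in\D(\L)$ by condition (1) for $\psi$, and then $\left((w')^\psi\right)^\varphi\in\D(\L)^\varphi$. But $\psi=\varphi^{-1}$, so on words $\left((w')^\psi\right)^\varphi=w'$, giving $w'\in\D(\L)^\varphi$. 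Hence $\D(\L)^\varphi=\D(\L')$, and $\varphi$ is an isomorphism of partial groups.

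There is no real obstacle here; the only point requiring a little care is the bookkeeping with the induced maps on words, namely that if $\varphi$ is bijective then the induced map $\W(\L)\to\W(\L')$ is bijective with inverse the map induced by $\varphi^{-1}$, and that this interacts correctly with the sets $\D(\L)^\varphi$ and $\D(\L')^\psi$. This is immediate from the componentwise definition of the induced map in Definition~\ref{D:PartialHom}, so I would simply state it in passing.
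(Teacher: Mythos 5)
Your proof is correct and follows essentially the same route as the paper: both directions reduce to the observation that the induced maps on words are mutually inverse, so $\D(\L)^\varphi=\D(\L')$ is equivalent to $\D(\L')^{\varphi^{-1}}\subseteq\D(\L)$ together with $\D(\L)^\varphi\subseteq\D(\L')$, and the multiplicativity of $\varphi^{-1}$ is obtained by applying $\varphi^{-1}$ to the identity $\Pi(w^{\varphi^{-1}})^\varphi=\Pi'(w)$. No gaps; the argument matches the paper's.
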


\begin{proof}
If $\phi$ is bijective and $\phi$ and $\phi^{-1}$ are both homomorphisms of partial groups, then $\D(\L)^\phi\subseteq\D(\L')$ and $\D(\L')^{\phi^{-1}}\subseteq \D(\L)$, with the latter inclusion implying $\D(\L')\subseteq\D(\L)^\phi$. Thus, we get $\D(\L)^\phi=\D(\L')$ and thus $\phi$ is a homomorphism of partial groups.

\smallskip

Assume now that $\phi$ is an isomorphism of partial groups. Then $\D(\L)^\phi=\D(\L')$ and thus $\D(\L')^{\phi^{-1}}=\D(\L)$. Given $w\in\D(\L')$, it remains to show that $\Pi'(w)^{\phi^{-1}}=\Pi(w^{\phi^{-1}})$. Note that $w^{\phi^{-1}}\in\D(\L)$ and thus, as $\phi$ is a homomorphism of partial groups, $\Pi(w^{\phi^{-1}})^\phi=\Pi'((w^{\phi^{-1}})^\phi)=\Pi'(w)$. This implies the required equation.
\end{proof}


\section{External semidirect products of partial groups}\label{S:ExternalPartialGroup}

In this section we will introduce the external semidirect product of partial groups as a natural generalization of the external semidirect product of groups. For that we need to consider the action of a partial group on another partial group as introduced in the following definition.

\begin{definition}\label{D:PartialGroupAction}
 Let $\X$ and $\N$ be partial groups. Then we say that \emph{$\X$ acts on the partial group $\N$} if there exists a homomorphism $\varphi \colon \X \rightarrow \Aut(\N)$ of partial groups. If such $\phi$ is given, we say also that $\X$ acts on the partial group $\N$ via $\phi$. 
\end{definition}

Note here that $\Aut(\N)$ forms a group with the composition of maps as multiplication; we regard $\Aut(\N)$ as a partial group in the usual way by extending the ``binary'' product on $\Aut(\N)$ to a multivariable product $\Pi_{\Aut(\N)}\colon\W(\Aut(\N))\rightarrow \Aut(\N)$.

\begin{remark}
If $X$ is a group, then $X$ acts on the partial group $\N$ via $\phi$ if and only if $\phi\colon X\rightarrow \Aut(\N)$ is a homomorphism of groups (cf. \cite[Lemma~1.16]{loc1}). 
\end{remark}

\begin{notation} 
Assume a partial group $\X$ acts on a partial group $\N$ via a homomorphism $\phi$. For $\Y\subseteq\X$ and $\M\subseteq\N$, set
\[\C_\Y^\phi(\M):=\{y\in\Y\colon m^{y^\phi}=m\mbox{ for all }m\in\M\}.\] 
If it does not lead to confusion, we write also $\C_\Y(\M)$ instead of $\C_\Y^\phi(\M)$.
\end{notation}

\begin{lemma}\label{L:ExternalCXNNormal}
If a partial group $\X$ acts on a partial group $\N$ via a homomorphism $\phi$, then $\C_\X^\phi(\N)=\ker(\phi)$ is a partial normal subgroup of $\X$.
\end{lemma}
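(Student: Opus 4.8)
The plan is to show two things: first that $\C_\X^\phi(\N) = \ker(\phi)$ as subsets of $\X$, and second that $\ker(\phi)$, viewed as the preimage of the identity automorphism under the partial group homomorphism $\phi\colon\X\to\Aut(\N)$, is a partial normal subgroup of $\X$. For the first claim, unravel the definitions: $y\in\C_\X^\phi(\N)$ means $m^{y^\phi}=m$ for all $m\in\N$, i.e.\ the automorphism $y^\phi\in\Aut(\N)$ fixes every element of $\N$, which says precisely that $y^\phi=\mathrm{id}_\N=\One_{\Aut(\N)}$, i.e.\ $y\in\ker(\phi)$. (Here I use Lemma~\ref{L:PartialHom}(c) to identify $\One_{\Aut(\N)}$ with the identity automorphism, which is the product of the empty word.) This equivalence is immediate.

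For the second claim, the cleanest route is a general observation: if $\phi\colon\X\to\G$ is any homomorphism of partial groups with $\G$ a (genuine) group, then $\ker(\phi):=\{x\in\X\colon x^\phi=\One_\G\}$ is a partial normal subgroup of $\X$. To see it is a partial subgroup: if $g\in\ker(\phi)$ then $(g^{-1})^\phi=(g^\phi)^{-1}=\One_\G^{-1}=\One_\G$ by Lemma~\ref{L:PartialHom}(a), so $g^{-1}\in\ker(\phi)$; and if $w=(f_1,\dots,f_k)\in\D(\X)\cap\W(\ker(\phi))$, then $\Pi(w)^\phi=\Pi_\G(w^\phi)=\Pi_\G(\One_\G,\dots,\One_\G)=\One_\G$ since $\G$ is a group, so $\Pi(w)\in\ker(\phi)$. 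For normality, let $g\in\X$ and $n\in\ker(\phi)\cap\D(g)$; then $(g^{-1},n,g)\in\D(\X)$, so applying $\phi$ and using that $\phi$ is a homomorphism, $(n^g)^\phi=\Pi(g^{-1},n,g)^\phi=\Pi_\G((g^\phi)^{-1},n^\phi,g^\phi)=\Pi_\G((g^\phi)^{-1},\One_\G,g^\phi)=\One_\G$, so $n^g\in\ker(\phi)$. Applying this with $\G=\Aut(\N)$ finishes the proof.

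I do not anticipate a genuine obstacle here — the argument is a routine diagram-chase once the definitions are unwound. The only point requiring a little care is the bookkeeping around $\One$: one must note that the product $\Pi_\G$ of any word all of whose letters equal $\One_\G$ is again $\One_\G$ (true in any group, hence in $\G=\Aut(\N)$ regarded as a partial group), and that $\phi$ sends $\One_\X$ to $\One_\G$ and commutes with inversion, which are exactly Lemma~\ref{L:PartialHom}(a),(c). It is also worth stating the kernel lemma in the slightly more general form above (homomorphism into an arbitrary group), since that is no harder and makes the structure of the argument transparent; alternatively one can just inline the computation for $\G=\Aut(\N)$.
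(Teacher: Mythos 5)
Your proof is correct and takes essentially the same route as the paper: the paper likewise observes that $\C_\X^\phi(\N)=\ker(\phi)$ is immediate from the definitions and then invokes Lemma~1.14 of \cite{loc1}, which states that the kernel of a homomorphism of partial groups is always a partial normal subgroup. The only difference is that you prove that cited kernel fact directly (for group-valued targets, which suffices here), and your verification of it is sound.
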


\begin{proof}
Clearly, $\C_\X(\N)=\ker(\phi)$. By \cite[Lemma~1.14]{loc1}, the kernel of a homomorphism of partial groups is always a partial normal subgroup.
\end{proof}

We will now construct external semidirect products of partial groups. For that we will work under the following hypothesis.

\begin{hypo}\label{H:ExternalPartialGroups}
Let $\X$ and $\N$ be partial groups with products $\Pi_\X\colon\D(\X)\rightarrow \X$ and $\Pi_\N\colon\D(\N)\rightarrow \N$ respectively. Assume that $\X$ acts on the partial group $\N$ via $\varphi \colon \X \rightarrow \Aut(\N)$. So $x^\phi$ is an automorphism of $\N$ for every $x \in \X$. By abuse of notation, denote by $x^\varphi$ also the corresponding map induced on the set of words in $\N$:
\[ x^\varphi \colon \W(\N) \rightarrow \W(\N), \quad w = (f_1, \dots, f_n) \mapsto w^{x^\varphi}=(f_1^{x^\varphi}, \dots, f_n^{x^\varphi}).\]
\end{hypo}

\begin{lemma}\label{L:ActionPartialGroup}
Assume Hypothesis~\ref{H:ExternalPartialGroups}. Then the following hold:
\begin{itemize}
\item[(a)] For every $w \in \D(\N)$, we have $w^{x^\varphi} \in \D(\N)$ and $(\Pi_{\N}(w))^{x^\varphi} = \Pi_{\N}( w^{x^\varphi})$.
\item[(b)] For every $f \in \N$, we have  $(f^{-1})^{x^\varphi} = (f^{x^\varphi})^{-1}$.
\item[(c)] If $u,v\in\W(\X)$ with $u\circ v\in\D(\X)$, then $f^{\Pi(u\circ v)^\phi}=(f^{\Pi(u)^\phi})^{\Pi(v)^\phi}$.
\end{itemize}
\end{lemma}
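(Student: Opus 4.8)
All three parts are direct unwindings of the fact that $x^\varphi \in \Aut(\N)$ and that $\varphi$ is a homomorphism of partial groups; the only real work is bookkeeping. For (a), fix $x \in \X$. Since $x^\varphi$ is an isomorphism of partial groups $\N \to \N$, Definition~\ref{D:PartialHom} gives $\D(\N)^{x^\varphi} \subseteq \D(\N)$, which is precisely the first claim $w^{x^\varphi} \in \D(\N)$ for $w \in \D(\N)$; and condition (2) of that definition gives $\Pi_\N(w)^{x^\varphi} = \Pi_\N(w^{x^\varphi})$, which is the second claim. (One could remark that one does not even need $x^\varphi$ to be an isomorphism here — being a homomorphism suffices — but since the action lands in $\Aut(\N)$ there is nothing to optimise.)

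For (b), apply Lemma~\ref{L:PartialHom}(a) to the homomorphism $x^\varphi \colon \N \to \N$: it states $(f^{-1})^\psi = (f^\psi)^{-1}$ for any homomorphism $\psi$ of partial groups, so with $\psi = x^\varphi$ we get $(f^{-1})^{x^\varphi} = (f^{x^\varphi})^{-1}$ for every $f \in \N$, as required.

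For (c), the key point is that $\varphi \colon \X \to \Aut(\N)$ is a homomorphism of partial groups, where $\Aut(\N)$ is regarded as a (genuine) group, hence as a partial group with full domain. Given $u, v \in \W(\X)$ with $u \circ v \in \D(\X)$, apply Lemma~\ref{L:PartialHom}(b) to $\varphi$: it yields
\[
\Pi(u \circ v)^\varphi = \Pi_{\Aut(\N)}\bigl(\Pi(u)^\varphi, \Pi(v)^\varphi\bigr).
\]
Since the product in $\Aut(\N)$ is composition of maps, the right-hand side is the automorphism $\Pi(u)^\varphi \circ \Pi(v)^\varphi$, i.e.\ the map sending $f \in \N$ to $(f^{\Pi(u)^\varphi})^{\Pi(v)^\varphi}$. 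Evaluating both sides at $f$ gives $f^{\Pi(u \circ v)^\varphi} = (f^{\Pi(u)^\varphi})^{\Pi(v)^\varphi}$, as claimed.

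There is no genuine obstacle here; the mild points to be careful about are (i) distinguishing the two meanings of $x^\varphi$ (map on $\N$ versus map on $\W(\N)$), so that in (a) the statement is read on the level of words, and (ii) making explicit that the exponential notation for the action in (c) is exactly composition in $\Aut(\N)$ applied in the order dictated by writing homomorphisms on the right, so that the bracketing $(f^{\Pi(u)^\varphi})^{\Pi(v)^\varphi}$ matches $\Pi(u \circ v)^\varphi$ and not its reverse. Once those conventions are pinned down, each part is a one-line citation of Lemma~\ref{L:PartialHom} or Definition~\ref{D:PartialHom}.
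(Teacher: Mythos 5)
Your proof is correct and follows exactly the same route as the paper: part (a) from the definition of a homomorphism of partial groups applied to the automorphism $x^\varphi$, part (b) from Lemma~\ref{L:PartialHom}(a), and part (c) from Lemma~\ref{L:PartialHom}(b) applied to $\varphi$ together with the fact that the product in $\Aut(\N)$ is composition in the order dictated by right-hand exponential notation. The only difference is that you spell out the conventions slightly more explicitly than the paper does.
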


\begin{proof}
As $x^\varphi$ is an automorphism of $\N$, property (a) follows from the definition of a homomorphism of partial groups, whereas (b) follows from Lemma~\ref{L:PartialHom}(a). As $\phi\colon \X\rightarrow \Aut(\N)$ is a homomorphism of partial groups, Lemma~\ref{L:PartialHom}(b) gives that $(\Pi(u\circ v))^\phi=\Pi_{\Aut(\N)}(\Pi(u)^\phi,\Pi(v)^\phi)$ is the composition of $\Pi(u)^\phi$ with $\Pi(v)^\phi$. So property (c) holds.
\end{proof}

\begin{definition}\label{D:ExternalNotationwxwn}
Assume Hypothesis~\ref{H:ExternalPartialGroups}.  
If $w = ( (x_1, f_1), \dots, (x_n, f_n))$ with $x_i \in \X$ and $f_i \in \N$, then we write
\[ w_\X = (x_1, \dots, x_n) \in \W(\X).\]
If $w_\X\in\D(\X)$, then set  
\[w_\N=( f_1^{(x_2 \cdots x_n)^\varphi}, f_2^{(x_3 \cdots x_n)^\varphi}, \dots, f_{n-1}^{x_n^\varphi}, f_n) \in \W(\N).\]
If $w$ is the empty word, we mean here that $w_\X$ and $w_\N$ are also both equal to the empty word.
\end{definition}

In the definition above note that, if $w_\X\in\W(\X)$, then for each $i=1,\dots,n$ the product $\Pi_\X(x_i,x_{i+1},\dots,x_n)=x_ix_{i+1}\cdots x_n$ is defined by axiom (PG1) of a partial group. Thus, the word $w_\N$ is in this case well-defined.

\begin{definition}[External semidirect product of partial groups]\label{external.par}
Assume Hypothesis~\ref{H:ExternalPartialGroups}. The external semidirect product of $\X$ with $\N$ (via $\varphi$) is the triple $(\L, \Pi, (-)^{-1}) $ where
\begin{itemize}
\item $\L= \{ (x, f) \mid x \in \X, f \in \N\}$;
\item$\D(\L) = \{ w \in \W(\L) \mid w_\X\in\D(\X)\mbox{ and }w_\N \in \D(\N)\}$;
\item $\Pi \colon \D(\L) \rightarrow \L, \quad w \mapsto (\Pi_\X(w_\X), \Pi_{\N}(w_\N))$; and 
\item $(-)^{-1} \colon \L \rightarrow \L, \quad (x,f) \mapsto (x,f)^{-1} = (x^{-1}, ((f^{-1})^{(x^{-1})^\varphi})$.
\end{itemize}
We write also $\X \ltimes_\varphi \N$ instead of $\L$.
\end{definition}

The next goal will be to show that the external semidirect product forms a partial group. We will need the following lemma.

\begin{lemma}\label{properties}
Assume Hypothesis~\ref{H:ExternalPartialGroups}. Let $u,v\in\W(\L)$. Then the following hold:
\begin{itemize}
\item[(a)] $(u\circ v)_\X=u_\X\circ v_\X$. If $(u\circ v)_\X\in\D(\X)$, then $u_\X,v_\X\in\D(\X)$ and $(u \circ v)_\N = u_\N^{(\Pi_\X(v_\X))^\varphi} \circ v_\N$;
\item[(b)] $(u^{-1})_\X=(u_\X)^{-1}$ and $(u^{-1}\circ u)_\X=(u_\X)^{-1}\circ u_\X$. In particular, if $(u^{-1}\circ u)_\X\in\D(\X)$, then $u_\X\in\D(\X)$ and $(u^{-1} \circ u)_\N = (u_\N)^{-1} \circ u_\N$.
\end{itemize}
\end{lemma}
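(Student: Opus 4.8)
The plan is to prove the two parts by direct unwinding of Definition~\ref{D:ExternalNotationwxwn}, being careful about where each clause of that definition is well-defined (it is only when the $\X$-component lies in $\D(\X)$). Write $u=((x_1,f_1),\dots,(x_m,f_m))$ and $v=((y_1,g_1),\dots,(y_n,g_n))$, so that $(u\circ v)_\X=(x_1,\dots,x_m,y_1,\dots,y_n)=u_\X\circ v_\X$; this first claim in (a) holds with no hypothesis, straight from the definition of $(-)_\X$ and concatenation. If $(u\circ v)_\X\in\D(\X)$, then axiom (PG1) applied to $u_\X\circ v_\X\in\D(\X)$ gives $u_\X,v_\X\in\D(\X)$, so all the relevant products $\Pi_\X$ of suffixes are defined and $u_\N$, $v_\N$, $(u\circ v)_\N$ all make sense.

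For the formula $(u\circ v)_\N=u_\N^{(\Pi_\X(v_\X))^\varphi}\circ v_\N$ I would compare the two sides letter by letter. The last $n$ letters of $(u\circ v)_\N$, coming from $g_1,\dots,g_n$, are by definition conjugated by $(y_{j+1}\cdots y_n)^\varphi$, which is exactly $v_\N$. The first $m$ letters, coming from $f_1,\dots,f_m$, are in $(u\circ v)_\N$ conjugated by $(x_{i+1}\cdots x_m y_1\cdots y_n)^\varphi$, whereas in $u_\N$ they are conjugated by $(x_{i+1}\cdots x_m)^\varphi$. Applying $(\,\cdot\,)^{(\Pi_\X(v_\X))^\varphi}=(\,\cdot\,)^{(y_1\cdots y_n)^\varphi}$ to the $i$-th letter of $u_\N$ therefore produces $f_i$ conjugated first by $(x_{i+1}\cdots x_m)^\varphi$ and then by $(y_1\cdots y_n)^\varphi$; by Lemma~\ref{L:ActionPartialGroup}(c) (applied with $u=(x_{i+1},\dots,x_m)$ and $v=(y_1,\dots,y_n)$, a decomposition of a word in $\D(\X)$) this equals $f_i$ conjugated by $(x_{i+1}\cdots x_m y_1\cdots y_n)^\varphi$, matching $(u\circ v)_\N$. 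This finishes (a). Note this is also exactly the place where Hypothesis~\ref{H:ExternalPartialGroups} and the multiplicativity of $\varphi$ are used; the only subtlety is bookkeeping of which suffix product multiplies which $f_i$.

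For (b), the identity $(u^{-1})_\X=(u_\X)^{-1}$ is immediate: $u^{-1}=((x_m^{-1},\cdot),\dots,(x_1^{-1},\cdot))$ by Definition~\ref{D:ExternalNotationwxwn}'s inversion on $\L$, so reading off $\X$-components gives $(x_m^{-1},\dots,x_1^{-1})=(u_\X)^{-1}$; combining with part (a)'s first identity gives $(u^{-1}\circ u)_\X=(u_\X)^{-1}\circ u_\X$. If this lies in $\D(\X)$ then (PG1) gives $u_\X\in\D(\X)$. The remaining formula $(u^{-1}\circ u)_\N=(u_\N)^{-1}\circ u_\N$ I would get by applying part (a) (now legitimate, since $(u^{-1}\circ u)_\X\in\D(\X)$) to the concatenation $u^{-1}\circ u$: this yields $(u^{-1}\circ u)_\N=(u^{-1})_\N^{(\Pi_\X(u_\X))^\varphi}\circ u_\N$, so it suffices to show $(u^{-1})_\N^{(\Pi_\X(u_\X))^\varphi}=(u_\N)^{-1}$. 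The $i$-th letter of $u_\N$ is $f_i^{(x_{i+1}\cdots x_m)^\varphi}$, so the $i$-th letter of $(u_\N)^{-1}$ (counting from the end) is its inverse. On the other side, the letters of $u^{-1}$ are the $(x,f)^{-1}=(x^{-1},(f^{-1})^{(x^{-1})^\varphi})$, and forming $(u^{-1})_\N$ conjugates the $\N$-part of the $k$-th letter of $u^{-1}$ by the product of the $\X$-parts of the later letters of $u^{-1}$, i.e.\ by $(x_{k-1}^{-1}\cdots x_1^{-1})^\varphi$. Writing this out for the letter of $u^{-1}$ coming from $(x_i,f_i)$, and then applying the outer conjugation by $(\Pi_\X(u_\X))^\varphi=(x_1\cdots x_m)^\varphi$, one chains the conjugations using Lemma~\ref{L:ActionPartialGroup}(c) and Lemma~\ref{L:ActionPartialGroup}(b) (to move the inverse past $x^\varphi$) and the fact that $x^\varphi$ is an automorphism; the telescoping of $x_i^{-1}\cdots x_1^{-1}$ against $x_1\cdots x_m$ leaves exactly $(x_{i+1}\cdots x_m)^\varphi$, producing $(f_i^{(x_{i+1}\cdots x_m)^\varphi})^{-1}$, as required.

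The main obstacle is purely notational: keeping the indexing of suffix products straight through the inversion in part (b), where the word is reversed and each letter's $\N$-component is itself already twisted by $(x^{-1})^\varphi$. I expect all the algebraic content to reduce to repeated use of Lemma~\ref{L:ActionPartialGroup}(b),(c); no new idea beyond careful reindexing should be needed.
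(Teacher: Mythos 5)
Your proposal is correct and follows essentially the same route as the paper: read off $(u\circ v)_\X=u_\X\circ v_\X$ from the definition, use (PG1) for well-definedness, compare $(u\circ v)_\N$ with $u_\N^{(\Pi_\X(v_\X))^\varphi}\circ v_\N$ letter by letter via Lemma~\ref{L:ActionPartialGroup}(c), and for (b) reduce to showing $(u^{-1})_\N^{(\Pi_\X(u_\X))^\varphi}=(u_\N)^{-1}$ using Lemma~\ref{L:ActionPartialGroup}(b),(c) and the telescoping of the suffix products. The bookkeeping you describe, including the reindexing through the reversal in part (b), is exactly what the paper's proof carries out.
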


\begin{proof}
Write $u= ( (x_1, f_1), \dots, (x_n, f_n))$ and $v = ( (y_1, g_1), \dots, (y_m, g_m))$ with $x_i, y_i \in \X$ and $f_i,g_i \in \N$. Clearly $(u\circ v)_\X=u_\X\circ v_\X$. Assume now that $(u\circ v)_\X\in\D(\X)$. By axiom (PG1) of a partial group, we have  $u_\X,v_\X\in\D(\X)$. So $(u\circ v)_\N$, $u_\N$ and $v_\N$ are well-defined. To show the last part of (a), set $y = \Pi_\X(v_\X) =y_1y_2\cdots y_m$. Using Lemma~\ref{L:ActionPartialGroup}(c) for the second equality, we see that 
\begin{align*} 
(u \circ v)_\N &= ( f_1^{(x_2 \cdots x_n \cdot y_1\cdots y_m)^\varphi}, f_2^{(x_3 \cdots x_n \cdot y_1\cdots y_m)^\varphi}, \dots, f_{n-1}^{(x_n \cdot y_1\cdots y_m)^\varphi}, f_n^{(y_1\cdots y_m)^\varphi}) \circ v_\N \\
&= ( (f_1^{(x_2 \cdots x_n )^\varphi})^{y^\varphi}, (f_2^{(x_3 \cdots x_n)^\varphi})^{y^\varphi}, \dots, (f_{n-1}^{x_n^\varphi})^{y^\varphi}, f_n^{y^\varphi}) \circ v_\N \\
&= u_\N^{y^\varphi} \circ v_\N. 
\end{align*}
Notice that 
\[u^{-1}=((x_n^{-1},(f_n^{-1})^{(x_n^{-1})^\phi}),\dots,(x_1^{-1},(f_1^{-1})^{(x_1^{-1})^\phi})).\] 
In particular, $(u^{-1})_\X=(x_n^{-1},\dots,x_1^{-1})=(u_\X)^{-1}$ and $(u^{-1}\circ u)_\X=(u^{-1})_\X\circ u_\X=(u_\X)^{-1}\circ u_\X$. Assume now $(u^{-1}\circ u)_\X\in\D(\X)$. Then by axiom (PG1) of a partial group, $u_\X,(u^{-1})_\X\in\D(\X)$ and thus $u_\N,(u^{-1})_\N$ are well-defined. Using Lemma~\ref{L:ActionPartialGroup}(c), it follows 
\[(u^{-1})_\N = ((f_n^{-1})^{(x_n^{-1}x_{n-1}^{-1}\cdots x_1^{-1})^\phi},(f_{n-1}^{-1})^{(x_{n-1}^{-1}x_{n-2}^{-1}\cdots x_1^{-1})^\phi},\dots,(f_1^{-1})^{(x_1^{-1})^\phi}).\]
Using Lemma~\ref{L:ActionPartialGroup}(b),(c), we conclude 
\[(u^{-1})_\N^{\Pi_\X(u_\X)}=(u^{-1})_\X^{(x_1x_2\cdots x_n)^\phi}=(f_n^{-1},(f_{n-1}^{-1})^{x_n^\phi},\dots,(f_1^{-1})^{(x_2\cdots x_n)^\phi})=(u_\N)^{-1}.\] 
So by part (a), $(u^{-1}\circ u)_\N=(u^{-1})_\N^{\Pi_\X(u_\X)}\circ u_\N=(u_\N)^{-1}\circ u_\N$. This completes the proof.
\end{proof}

\begin{lemma}\label{partialgrp}
If a partial group $\X$ acts on a partial group $\N$ via $\phi$, then the external semidirect product $\X \ltimes_\varphi \N$ of $\X$ with $\N$ is a partial group.
\end{lemma}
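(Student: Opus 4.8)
The plan is to verify the four axioms (PG1)–(PG4) of Definition~\ref{partial} directly for $\L = \X\ltimes_\phi\N$, using Lemma~\ref{properties} as the main bookkeeping tool to reduce statements about words in $\L$ to paired statements about words in $\X$ and in $\N$, and then invoking the fact that $\X$ and $\N$ are themselves partial groups. Throughout, for $w\in\W(\L)$ I will freely use that $w\in\D(\L)$ iff $w_\X\in\D(\X)$ and $w_\N\in\D(\N)$, and that the product is $\Pi(w)=(\Pi_\X(w_\X),\Pi_\N(w_\N))$.

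\smallskip

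First, (PG1): words of length one lie in $\D(\L)$ because for $(x,f)\in\L$ we have $w_\X=(x)\in\D(\X)$ by (PG1) for $\X$, and $w_\N=(f)\in\D(\N)$ by (PG1) for $\N$. For the implication $u\circ v\in\D(\L)\Rightarrow u,v\in\D(\L)$: by Lemma~\ref{properties}(a), $(u\circ v)_\X=u_\X\circ v_\X\in\D(\X)$, so by (PG1) for $\X$ both $u_\X,v_\X\in\D(\X)$; then $(u\circ v)_\N=u_\N^{(\Pi_\X(v_\X))^\phi}\circ v_\N\in\D(\N)$, so by (PG1) for $\N$ both $u_\N^{(\Pi_\X(v_\X))^\phi}$ and $v_\N$ lie in $\D(\N)$, and applying the automorphism $(\Pi_\X(v_\X))^{-\phi}$ (Lemma~\ref{L:ActionPartialGroup}(a)) gives $u_\N\in\D(\N)$. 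Hence $u,v\in\D(\L)$. (PG2) is immediate from $\Pi(x,f)=(\Pi_\X(x),\Pi_\N(f))=(x,f)$ using (PG2) for $\X$ and $\N$.

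\smallskip

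For (PG3), suppose $u\circ v\circ w\in\D(\L)$. Applying Lemma~\ref{properties}(a) repeatedly, the $\X$-component gives $u_\X\circ v_\X\circ w_\X\in\D(\X)$, so (PG3) for $\X$ yields $u_\X\circ(\Pi_\X(v_\X))\circ w_\X\in\D(\X)$ with equal products; note $(\Pi(v))_\X=(\Pi_\X(v_\X))$, so $u\circ(\Pi(v))\circ w$ has the right $\X$-component and $\Pi_\X$ agrees. For the $\N$-component one has to track the twisting factors carefully: writing out $(u\circ v\circ w)_\N$ and $(u\circ(\Pi(v))\circ w)_\N$ via Lemma~\ref{properties}(a) and Lemma~\ref{L:ActionPartialGroup}(a),(c), both reduce — using (PG3) for $\N$ applied to the appropriately conjugated subword $v_\N^{(\Pi_\X(w_\X))^\phi}$ — to the same element of $\D(\N)$ with equal product. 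Checking that the conjugating exponents match is the place requiring care, but it is forced: the exponent on $v_\N$ inside $(u\circ v\circ w)_\N$ is exactly $(\Pi_\X(w_\X))^\phi$, which is also what appears when one expands $(u\circ(\Pi(v))\circ w)_\N$. For (PG4): given $w\in\D(\L)$, Lemma~\ref{properties}(b) gives $(w^{-1}\circ w)_\X=(w_\X)^{-1}\circ w_\X$, which lies in $\D(\X)$ by (PG4) for $\X$, and $(w^{-1}\circ w)_\N=(w_\N)^{-1}\circ w_\N\in\D(\N)$ by (PG4) for $\N$; hence $w^{-1}\circ w\in\D(\L)$. Then $\Pi(w^{-1}\circ w)=(\Pi_\X((w_\X)^{-1}\circ w_\X),\Pi_\N((w_\N)^{-1}\circ w_\N))=(\One_\X,\One_\N)=\Pi(\emptyset)$ by (PG4) for $\X$ and $\N$, as required. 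Finally one should note $(-)^{-1}$ on $\L$ is an involutory bijection: this is a short direct computation using Lemma~\ref{L:ActionPartialGroup}(a),(b),(c) showing $((x,f)^{-1})^{-1}=(x,f)$.

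\smallskip

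\textbf{Main obstacle.} The routine-looking but genuinely delicate step is (PG3) on the $\N$-component: one must expand both $(u\circ v\circ w)_\N$ and $(u\circ(\Pi(v))\circ w)_\N$ using the definition of $(-)_\N$ together with Lemma~\ref{L:ActionPartialGroup}(c) (compatibility of the action with the $\X$-product), verify that the decoration exponents on the subwords coming from $u$, from $v$ (resp.\ from $\Pi(v)$), and from $w$ agree, and only then apply (PG3) for $\N$. The decorations are where sign/order errors would creep in, so the proof should display these two expansions side by side — but no deep idea is needed beyond Lemma~\ref{properties} and Lemma~\ref{L:ActionPartialGroup}.
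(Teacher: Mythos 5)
Your proposal is correct and follows essentially the same route as the paper: both verify (PG1)--(PG4) by splitting each word into its $\X$- and $\N$-components via Lemma~\ref{properties}, invoking the corresponding axioms in $\X$ and $\N$, and using Lemma~\ref{L:ActionPartialGroup} to handle the twisting; in particular you correctly identify the key point in (PG3), namely that the exponent $(\Pi_\X(w_\X))^\phi$ decorating $v_\N$ matches on both sides so that $\Pi_\N(v_\N)^{z^\phi}=\Pi_\N(v_\N^{z^\phi})$. The only difference is that you sketch this expansion rather than writing it out, and you additionally flag the (easy) check that the inversion is involutory, which the paper leaves implicit.
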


\begin{proof}
Adopt the notation introduced in Hypothesis~\ref{H:ExternalPartialGroups}. We prove that the triple $(\L, \Pi, (-)^{-1}) $ defined in Definition~\ref{external.par} satisfies all the axioms of Definition \ref{partial}.
As $\N$ and $\X$ are non-empty, $\L$ is non-empty. As usual, we regard elements of $\L$, $\X$ or $\N$ as words of length one.
Then, for any $(x,f) \in \L$, we have $(x,f)_\X=x\in\D(\X)$ and $(x,f)_\N =f\in \D(\N)$, which implies $(x,f)\in\D(\L)$. Thus $\L \subseteq \D(\L)$. 
Also, $\Pi( (x,f)) = (\Pi_\X(x), \Pi_{\N}(f)) = (x,f)$. So $\Pi$ restricts to the identity map on $\L$. 

\smallskip

Let now $u,v\in\W(\L)$ such that $u \circ v \in \D(\L)$. By the first part of Lemma~\ref{properties}(a) we get $u_\X,v_\X\in\D(\X)$. Set $y= \Pi_\X(v_\X)$. Using the second part of Lemma~\ref{properties}(a), it follows then $ (u \circ v)_\N =  u_\N^{y^\varphi} \circ v_\N  \in \D(\N)$.
Since $\N$ is a partial group we deduce that $u_\N^{y^\varphi}, v_\N \in \D(\N)$ and since $y^\varphi$ is an automorphism of $\N$ we also get $u_\N \in \D(\N)$. Hence $u,v \in \D(\L)$ by the definition of $\D(\L)$. So we have shown that axioms (PG1) and (PG2) of Definition~\ref{partial} hold.

\smallskip

Let now $u,v,w\in\W(\L)$ such that $u \circ v \circ w \in \D(\L)$. By what we have just shown, we have $u,v,w\in\D(\L)$. In particular, $v_\X,w_\X\in\D(\X)$. Set $y= \Pi_\X(v_\X)$ and $z = \Pi_\X(w_\X)$. By Lemma \ref{properties}(a) applied twice we get $(u\circ v\circ w)_\X=u_\X\circ v_\X\circ w_\X$ and  $(u \circ v \circ w)_\N =  u_\N^{(yz)^\varphi} \circ v_\N^{z^\varphi} \circ w_\N$ (and all the terms in the latter equation are well-defined). So by definition of $\D(\L)$, we have $u_\X\circ v_\X\circ w_\X\in\D(\X)$ and $u_\N^{(yz)^\varphi} \circ v_\N^{z^\varphi} \circ w_\N \in \D(\N)$. Since $\X$ is a partial group, it follows 
\[u_\X\circ(y)\circ w_\X\in\D(\X)\mbox{ and }\Pi_\X(u_\X\circ v_\X\circ w_\X)=\Pi(u_\X\circ(y)\circ w_\X).\]
Similarly, since $\N$ is a partial group, we can conclude that  
\[u_\N^{(yz)^\varphi} \circ (\Pi_{\N}(v_\N^{z^\varphi})) \circ w_\N \in \D(\N)\]
and 
\[\Pi(u_\N^{(yz)^\varphi} \circ (\Pi_{\N}(v_\N^{z^\varphi})) \circ w_\N)=\Pi(u_\N^{(yz)^\varphi} \circ v_\N^{z^\varphi} \circ w_\N).\] 
Recall that $v \in \D(\L)$. By definition of the product on $\L$, we have $\Pi(v) = (y,\Pi_{\N}(v_\N))$. Using this and Lemma~\ref{properties}(a) twice we observe  
\[  (u \circ (\Pi(v)) \circ w)_\N = u_\N^{(yz)^\varphi} \circ ( \Pi_{\N}(v_\N)^{z^\varphi}) \circ w_\N = u_\N^{(yz)^\varphi} \circ (\Pi_{\N}(v_\N^{z^\varphi})) \circ w_\N \in \D(\N), \]
where the last equality uses Lemma~\ref{L:ActionPartialGroup}(a) (i.e. the fact that $z^\phi$ is an automorphism of $\N$). Recall also that $(u\circ(\Pi(v))\circ w)_\X=u_\X\circ (y)\circ w_\X\in\D(\X)$. 
Hence $u \circ \Pi(v) \circ w \in \D(\L)$ by definition of $\D(\L)$. Also,
\begin{eqnarray*}
\Pi( u \circ v \circ w) &=& (\Pi_\X(u_\X\circ v_\X\circ w_\X), \Pi_{\N}(  u_\N^{(yz)^\varphi} \circ v_\N^{z^\varphi} \circ w_\N) ) \\
&=& (\Pi_\X(u_\X\circ (y)\circ w_\X), \Pi_{\N}( u_\N^{(yz)^\varphi} \circ (\Pi_{\N}(v_\N^{z^\varphi})) \circ w_\N ) ) \\
&=& (\Pi_\X((u\circ (\Pi(v))\circ w)_\X),\Pi_\N((u\circ (\Pi(v))\circ w)_\N))\\
&=& \Pi( u \circ \Pi(v) \circ w).
\end{eqnarray*}
This shows axiom (PG3) in Definition~\ref{partial}. 

\smallskip

To show the final axiom, suppose now $u \in \D(\L)$, i.e. $u_\N \in \D(\N)$. By Lemma \ref{properties}(b) and the assumption that $\X$ and $\N$ are partial groups, we get $(u^{-1}\circ u)_\X=u_\X^{-1}\circ u_\X\in\D(\X)$ and $(u^{-1} \circ u )_\N =  u_\N^{-1} \circ u_\N \in \D(\N)$. Moreover, $\Pi_\X(u_\X^{-1}\circ u_\X)=\One_\X:=\Pi_\X(\emptyset)$ and $\Pi_\N(u_\N^{-1}\circ u_\N)=\One_\N:=\Pi_\N(\emptyset)$. Hence $u^{-1} \circ u  \in \D(\L)$ and
\[ \Pi(u^{-1} \circ u ) = (\Pi_\X(u_\X^{-1}\circ u_\X), \Pi_{\N}(  u_\N^{-1} \circ u_\N) ) = (\One, \One)=(\Pi_\X(\emptyset),\Pi_\N(\emptyset))=\Pi(\emptyset)=:\One_\L.\]
Therefore the set $\L$ with the product $\Pi$ and the inversion $(-)^{-1}$ is a partial group.
\end{proof}

\begin{remark}
\begin{itemize}
\item If $\X$ and $\N$ are groups and we regard $\X$ and $\N$ as partial groups in the natural way, then the group automorphisms of $\N$ are precisely the automorphisms of the partial group $\N$. Moreover, a map $\phi\colon \X\rightarrow \Aut(\N)$ is a homomorphism of partial groups if and only if $\phi$ is a homomorphism of groups. If so, then $\X \ltimes_\varphi \N$ is the usual external semidirect product of groups.
\item If $\X$ and $\N$ are partial groups and $\phi\colon\X\rightarrow \Aut(\N)$ maps every element of $\X$ to the identity, then $\phi$ is a homomorphism of partial groups and the external semidirect product $\X\ltimes_\varphi\N$ is the same as the external direct product $\X\times \N$ of partial groups as introduced in \cite{direct.prod}.
\end{itemize}
\end{remark}

Similarly as in the case of external semidirect products of groups, $\X$ and $\N$ can be identified with partial subgroups of the external semidirect product $\X \ltimes_\varphi \N$. More generally this holds for partial subgroups $\Y$ and $\M$ of $\X$ and $\N$ respectively. We will use the following notation.

\begin{notation}\label{N:ExternalCartesianProd}
Assume Hypothesis~\ref{H:ExternalPartialGroups} and suppose $\L =\X\ltimes_\varphi \N$. For every partial subgroup $\Y$ of $\X$ and every partial subgroup $\M$ of $\N$ we set
\[(\Y,\M) = \{ (y,m) \in \L \mid y \in \Y,\;m\in\M \}.\]
\end{notation}
Note that $(\Y,\M)$ is actually the same as the Cartesian product of $\Y$ and $\M$, which is usually denoted by $\Y\times\M$. However, we wish to avoid this notation as it would lead to confusion with our notation of the direct product $\Y\times\M$ of partial groups. 

Following the usual notation for (binary) groups, we write $\One$ for the partial subgroup $\{\One\}$ of any partial group with identity $\One$; note that this is a partial subgroup by Lemma~\ref{L:PartialGroup}(e). In particular, $(\Y,\One)$ and $(\One,\M)$ are defined.

\begin{lemma}\label{subgroups}
Assume Hypothesis~\ref{H:ExternalPartialGroups} and suppose $\L = \X \ltimes_\varphi \N$. Let $\Y$ be a partial subgroup of $\X$ and let $\M$ be a partial subgroup of $\N$. Then the following hold:
\begin{itemize}
\item[(a)] If $\Y\subseteq \rN_\X(\M)$, then $(\Y,\M)$ is a partial subgroup of $\L$; 
\item[(b)] $(\Y,\One)$ is a partial subgroup of $\L$;
\item[(c)] $(\One,\M)$ is a partial subgroup of $\L$.
\item[(d)] The maps $\alpha\colon \Y\rightarrow (\Y,\One),y\mapsto (y,\One)$ and $\M\rightarrow (\One,\M),m\mapsto (\One,m)$ are isomorphisms of partial groups.
\end{itemize}
\end{lemma}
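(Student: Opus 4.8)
The strategy is to verify the two defining properties of a partial subgroup for each of the four claimed subsets, reducing everything to the behaviour of the coordinates $w_\X$ and $w_\N$ under the product on $\L$, as described in Definition~\ref{D:ExternalNotationwxwn} and Lemma~\ref{properties}. The key computational fact I will use repeatedly is that for $w\in\W(\L)$ with all letters in $(\Y,\M)$ (respectively $(\Y,\One)$, $(\One,\M)$), the word $w_\X$ lies in $\W(\Y)$ and, when $w_\X\in\D(\X)$, the word $w_\N$ is obtained from the $\N$-coordinates of the letters of $w$ by applying automorphisms $x^\varphi$ with $x\in\X$; when $\Y\subseteq\rN_\X(\M)$ these automorphisms preserve $\M$, so $w_\N\in\W(\M)$.

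\smallskip

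For part (a): closure under inversion is immediate from the formula $(x,f)^{-1}=(x^{-1},(f^{-1})^{(x^{-1})^\varphi})$, since $x\in\Y$ gives $x^{-1}\in\Y$, and $\Y\subseteq\rN_\X(\M)$ together with $f\in\M$, $f^{-1}\in\M$ gives $(f^{-1})^{(x^{-1})^\varphi}\in\M$. For closure under the product, take $w=((x_1,f_1),\dots,(x_n,f_n))\in\D(\L)\cap\W((\Y,\M))$. Then $w_\X=(x_1,\dots,x_n)\in\D(\X)\cap\W(\Y)$, so $\Pi_\X(w_\X)\in\Y$ since $\Y$ is a partial subgroup. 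Moreover each entry of $w_\N$ has the form $f_i^{(x_{i+1}\cdots x_n)^\varphi}$ with $x_{i+1}\cdots x_n\in\Y$ (as $\Y$ is a partial subgroup, hence a group, and these are products of words in $\W(\Y)$); since $\Y\subseteq\rN_\X(\M)$, each such entry lies in $\M$, so $w_\N\in\D(\N)\cap\W(\M)$ and $\Pi_\N(w_\N)\in\M$. Hence $\Pi(w)=(\Pi_\X(w_\X),\Pi_\N(w_\N))\in(\Y,\M)$. (Here I should be slightly careful that $x_{i+1}\cdots x_n$ really lies in $\Y$: it equals $\Pi_\X$ of the word $(x_{i+1},\dots,x_n)\in\W(\Y)$, which is in $\D(\X)$ by (PG1), so this is fine.)

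\smallskip

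Parts (b) and (c) are then special cases, but it is cleanest to observe directly that $\One$ is a partial subgroup of $\X$ and of $\N$ by Lemma~\ref{L:PartialGroup}(e), that $\One\subseteq\rN_\X(\M)$ trivially, and that $\Y\subseteq\rN_\X(\One)$ trivially (conjugation fixes $\One$ by Lemma~\ref{L:PartialGroup}(f) or directly); hence (b) and (c) follow from (a) with $\M=\One$ and $\Y=\One$ respectively. Alternatively one checks them by hand, which is even quicker since in $(\Y,\One)$ every $f_i=\One$ so $w_\N\in\W(\{\One\})$ automatically, and in $(\One,\M)$ every $x_i=\One$ so $w_\X\in\W(\{\One\})$.

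\smallskip

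For part (d): the map $\alpha\colon\Y\rightarrow(\Y,\One)$ is clearly a bijection. To see it is a homomorphism of partial groups, let $w=(y_1,\dots,y_n)\in\D(\Y)\subseteq\D(\X)$; I need $w^\alpha=((y_1,\One),\dots,(y_n,\One))\in\D(\L)$ and $\Pi(w)^\alpha=\Pi(w^\alpha)$. But $(w^\alpha)_\X=w\in\D(\X)$ and $(w^\alpha)_\N\in\W(\{\One\})$ which lies in $\D(\N)$ by Lemma~\ref{L:PartialGroup}(d)--(e); and $\Pi(w^\alpha)=(\Pi_\X(w),\Pi_\N((w^\alpha)_\N))=(\Pi_\X(w),\One)=\Pi(w)^\alpha$ using Lemma~\ref{L:PartialGroup}(e). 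The same argument with the roles of the coordinates swapped handles $\M\rightarrow(\One,\M)$, where now $(w^\beta)_\X\in\W(\{\One\})\subseteq\D(\X)$ has $\Pi_\X$-value $\One$, and $(w^\beta)_\N=w$ since all the acting group elements are $\One$ and $\One^\varphi$ is the identity automorphism of $\N$ by Lemma~\ref{L:PartialHom}(c). Surjectivity onto $\D(\L)$ restricted to these partial subgroups, needed for the isomorphism claim, follows by running the argument backwards, using that membership of a word of $(\Y,\One)$ in $\D(\L)$ is equivalent to its $\X$-coordinate lying in $\D(\X)=\D(\Y)$-restricted (and the $\N$-coordinate is automatically in $\D(\N)$), with the analogous statement for $(\One,\M)$.

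\smallskip

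\textbf{Main obstacle.} The only genuinely delicate point is the bookkeeping in part (a): one must confirm that the acting elements appearing in $w_\N$ — namely the partial products $x_{i+1}\cdots x_n$ — actually lie in $\Y$ (so that $\rN_\X(\M)$ can be invoked), which relies on $\Y$ being a \emph{partial subgroup}, hence a group, closed under these products. Everything else is routine coordinate-chasing through Lemma~\ref{properties} and the elementary identities of Lemma~\ref{L:PartialGroup}.
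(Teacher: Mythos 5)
Your proof is correct and follows essentially the same route as the paper's: coordinate-chasing through $w_\X$ and $w_\N$ for closure of $(\Y,\M)$ under the product and inversion, deducing (b) and (c) as special cases of (a), and establishing (d) via the observation that membership of $u^\alpha$ in $\D(\L)$ is equivalent to $u\in\D(\X)$ because the other coordinate is a word in $\{\One\}$. One remark: your aside that $\Y$ is ``hence a group'' is not true for a general partial subgroup, but your parenthetical correction via (PG1) is exactly the right justification, and your handling of the inverse $(x,f)^{-1}=(x^{-1},(f^{-1})^{(x^{-1})^\varphi})$ in the normalizer case is in fact more careful than the paper's own wording there.
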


\begin{proof}
(a) Suppose $\Y\subseteq \C_\X(\M)$. Using the definition of $\D(\L)$, we get that, for any word $w=((y_1,m_1),\dots,(y_k,m_k))\in\D(\L)\cap\W((\Y,\M))$ with $y_1,\dots,y_k\in\Y$ and $m_1,\dots,m_k\in\M$, we have $w_\X=(y_1,\dots,y_m)\in\D(\X)\cap\W(\Y)$ and $w_\N=(m_1^{(y_2 \cdots y_k)^\varphi}, m_2^{(y_3 \cdots y_k)^\varphi}, \dots, m_{k-1}^{y_k^\varphi},m_k)\in\D(\N)\cap \W(\M)$, as $\Y$ is a partial subgroup of $\X$ with $\Y\subseteq\rN_\X(\M)$. So by definition of $\Pi$, we have $\Pi(w)=(\Pi_\X(w_\X),\Pi_\N(w_\N))\in(\Y,\M)$ as $\Y$ and $\M$ are partial subgroups. Moreover, if $(y,m)\in(\Y,\M)$ with $y\in\Y$ and $m\in\M$, then $y^{-1}\in\Y$ and $m^{-1}\in\M$. Thus $(m^{-1})^{(y^{-1})^\phi}=m^{-1}$ and  $(y,m)^{-1}=(y^{-1},m^{-1})\in(\Y,\M)$. This proves (a). Properties (b) and (c) follow from (a).

\smallskip

(d) Note that $\alpha$ and $\beta$ are clearly bijective. Let $u=(y_1,\dots,y_k)\in\W(\Y)$. Then $u^\alpha=((y_1,\One),\dots,(y_k,\One))$, so $(u^\alpha)_\X=(y_1,\dots,y_k)=u$. Moreover, if $(u^\alpha)_\X=u\in\D(\X)$ so that $(u^\alpha)_\N$ is well-defined, then $(u^\alpha)_\N=(\One,\dots,\One)$ by Lemma~\ref{L:PartialHom}(c). Hence, by Lemma~\ref{L:PartialGroup}(e), we have then $(u^\alpha)_\N\in\D(\N)$ and $\Pi_\N((u^\alpha)_\N)=\One$. So by (SD3), $u\in\D(\X)$ if and only if $u^\alpha\in\D(\L)$. Hence, $u\in\D(\Y)=\D(\X)\cap\W(\Y)$ if and only if $u^\alpha\in\D((\Y,\One))=\D(\L)\cap\W((\Y,\One))$. Since $\alpha$ is bijective, this shows $\D(\Y)^\alpha=\D((\Y,\One))$. Moreover, if $u\in\D(\Y)$, then $\Pi(u^\alpha)=(\Pi_\X((u^\alpha)_\X),\Pi_\N((u^\alpha)_\N))=(\Pi_\X(u),\One)=\Pi_\X(u)^\alpha$. This proves that $\alpha$ is an isomorphism of partial groups. Similar arguments show that $\beta$ is an isomorphism of partial groups.  
\end{proof}

\section{Internal semidirect products of partial groups}\label{S:InternalPartialGroup}

\begin{definition}[Internal semidirect products of partial groups]\label{internal.par}

Let $\L$ be a partial group, let $\X$ and $\N$ be a partial subgroups of $\L$. Assume
\begin{itemize}
\item[(SD1)] $\X \subseteq \mathrm{N}_\L(\N)$;
\item[(SD2)] for every $g \in \L$ there is a unique $x\in \X$ and a unique $n\in\N$ such that $(x,n)\in\D(\L)$ and $g= \Pi( x,n )$;
\end{itemize}
For every word $w=(\Pi(x_1,n_1), \dots, \Pi(x_k,n_k) )\in\W(\L)$ with $x_1,\dots,x_k\in \X$ and $n_1,\dots,n_k\in\N$ set $w_\X:=(x_1,\dots,x_k)$. Moreover, if $w_\X\in\D(\L)$, set 
\[w_\N:=(n_1^{\Pi(x_2, \dots, x_k)}, n_2^{ \Pi(x_3, \dots, x_k)}, \dots ,n_k ).\]

\smallskip

We say that $\L$ is the \emph{internal semidirect product} of $\X$ with $\N$ if in addition to (SD1) and (SD2) the following property holds: 
\begin{itemize}
\item[(SD3)] For every word $w\in\W(\L)$ we have $w\in\D(\L)$ if and only if $w_\X\in \D(\L)$ and $w_\N\in\D(\L)$; and in this case 
\[ \Pi(w) = \Pi(\Pi(w_\X), \Pi(w_\N)).\]
\end{itemize}
\end{definition}

\begin{remark}
\begin{itemize}
\item [(a)] If $\L$ is the internal semidirect product of $\X$ with $\N$, and $\X$ and $\N$ are subgroups, then $\L$ is by (SD3) a group. Indeed, $\L$ is the internal semidirect product of groups in the usual definition; to see this use Lemma~\ref{internal.prop.par}(b),(d) below.
\item [(b)] The internal direct product of partial groups as defined in \cite{direct.prod} is a special case of the internal semidirect product as defined above. For, if $\L$ is the internal direct product of $\X$ and $\N$, then by \cite[Lemma~6.3]{direct.prod}, $\X\subseteq \C_\L(\N)$ and so in particular (SD1) holds. Moreover, if $\X$ and $\N$ are partial subgroups of $\L$ with $\X\subseteq \C_\L(\N)$, then (SD2) and (SD3) are equivalent to saying that $\L$ is the internal direct product of $\X$ and $\N$ as defined in \cite[Definition~6.1]{direct.prod}.
\end{itemize} 
\end{remark}

\begin{lemma}\label{internal.prop.par}
Let $\L$ be  the internal semidirect product of $\X$ with $\N$. Then
\begin{itemize}
\item[(a)] $(x,n), (n, x) \in \D(\L)$ for every $x \in \X$ and every $n \in \N$; and
\item[(b)] $\L= \X\N$ and  $\X \cap \N = \One$.
\item[(c)] If $x\in \X$ and $n\in\N$, then 
\[\Pi(x,n)^{-1}=\Pi(x^{-1},(n^{-1})^{x^{-1}})\mbox{ and }\Pi(n,x)=\Pi(x,n^x).\]
\item[(d)] $\N$ is a partial normal subgroup of $\L$. 
\end{itemize}
\end{lemma}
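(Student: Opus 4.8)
The plan is to establish the four parts in order, since the later parts build on the earlier ones.

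\medskip

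\textbf{Plan for part (a).} Let $x \in \X$ and $n \in \N$. The word $w = (x,n)$ has $w_\X = (x)$, which lies in $\D(\L)$ by axiom (PG1), and $w_\N = (n)$ (using the convention in Definition~\ref{internal.par} that $n_1^{\Pi(\emptyset)} = n_1^{\One}$; more carefully, $w_\N$ should be read so that the trailing entry is just $n$), which lies in $\D(\L)$ again by (PG1). Hence by (SD3), $w = (x,n) \in \D(\L)$. For $(n,x) \in \D(\L)$: here I would instead start from the word $(x^{-1}, n, x)$ read in the right way, or more directly observe that $\X \subseteq \rN_\L(\N)$ by (SD1) gives $(x^{-1}, n, x) \in \D(\L)$ via the conjugation axiom, i.e. $n \in \D(x)$ and $n^x \in \N$; then $\Pi(n,x)$ — I would apply (SD3) to $u = (n,x)$, computing $u_\X = (\One, x)$ and $u_\N = (n^{\Pi(x)}, \One) = (n^x, \One)$; since $\X, \N$ are partial subgroups these sub-words lie in $\D(\L)$, so $(n,x) \in \D(\L)$. (One subtlety: $(n,x)$ is literally a word in $\L$, but to apply (SD3) one must write each letter as $\Pi(x_i, n_i)$; writing $n = \Pi(\One, n)$ and $x = \Pi(x, \One)$ does this.) This computation simultaneously yields $\Pi(n,x) = \Pi(\Pi(\One,x), \Pi(n^x, \One)) = \Pi(x, n^x)$, which is the second identity of part~(c).

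\medskip

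\textbf{Plan for parts (b), (c), (d).} For (b): $\L = \X\N$ is immediate from (SD2), which says every $g \in \L$ equals $\Pi(x,n)$ for some $x \in \X$, $n \in \N$; and $\X \cap \N = \One$ follows from the \emph{uniqueness} clause in (SD2) — if $g \in \X \cap \N$ then $g = \Pi(g, \One) = \Pi(\One, g)$ are two decompositions, forcing $g = \One$ (using $\Pi(\One, g) = g = \Pi(g,\One)$ via Lemma~\ref{L:PartialGroup}(d), since $\One \in \X \cap \N$). For (c): the second identity is already obtained above; for the first, $\Pi(x,n)^{-1}$, I would use Lemma~\ref{L:PartialGroup}(g) applied to the word $(x,n) \in \D(\L)$, giving $\Pi(x,n)^{-1} = \Pi((x,n)^{-1}) = \Pi(n^{-1}, x^{-1})$, and then apply the second identity of (c) (with $x^{-1} \in \X$, $n^{-1} \in \N$) to rewrite $\Pi(n^{-1}, x^{-1}) = \Pi(x^{-1}, (n^{-1})^{x^{-1}})$. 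For (d): $\N$ is a partial subgroup by hypothesis, so I only need partial normality, i.e. for $g \in \L$ and $n \in \N \cap \D(g)$ that $n^g \in \N$. Writing $g = \Pi(x,m)$ with $x \in \X$, $m \in \N$ via (SD2), the conjugation map $c_g$ factors as $c_x \circ c_m$ (using Lemma~\ref{L:ConstructLocality0}(c)-style reasoning, or directly via (PG3): $n^g = \Pi(g^{-1}, n, g) = \Pi(m^{-1}, x^{-1}, n, x, m)$, valid because the relevant words lie in $\D(\L)$); then $n^x \in \N$ by (SD1), and $(n^x)^m \in \N$ because $\N$ is a partial subgroup (it is closed under products of its own elements, and $m^{-1}, n^x, m \in \N$).

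\medskip

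\textbf{Main obstacle.} The delicate point throughout is the bookkeeping in applying (SD3): its statement is phrased for words whose letters are \emph{presented} as $\Pi(x_i, n_i)$, so each time I want to apply it to a concrete word like $(n,x)$ or $(m^{-1}, x^{-1}, n, x, m)$ I must first fix a presentation of each letter in the form $\Pi(x_i, n_i)$ and check that $w_\X$ and $w_\N$ (computed from \emph{that} presentation) land in $\D(\L)$ — here I would lean on (a), on $\X, \N$ being partial subgroups, and on (SD1) ($\X \subseteq \rN_\L(\N)$, so conjugation by elements of $\X$ preserves $\N$ and the relevant three-letter words are in $\D(\L)$). The other mild care is ensuring the trailing-entry convention in the definition of $w_\N$ is handled consistently, but that is a notational matter, not a mathematical one. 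No single step is deep; the work is in being scrupulous about domains.
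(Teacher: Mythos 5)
Your proposal is correct and follows essentially the same route as the paper's proof: part (b) and the inversion formula in (c) are argued identically, the identity $\Pi(n,x)=\Pi(x,n^x)$ and part (d) both reduce to applying (SD3) to a word whose letters are presented as $\Pi(x_i,n_i)$ with $\One$ inserted where needed, and the domain bookkeeping you flag as the main obstacle is exactly the content of the paper's argument. The only cosmetic difference is in (a), where the paper sidesteps (SD3) entirely by observing that (SD1) gives $(x^{-1},n,x)\in\D(\L)$ and then extracting the subwords $(x^{-1},n)$ and $(n,x)$ via (PG1).
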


\begin{proof}~
(a) Since $\X \subseteq \N_\L(\N)$, for every $n\in \N$ and every $y\in \X$ we have $(y^{-1}, n, y) \in \D(\L)$. Thus $(y^{-1}, n), (n,y) \in \D(\L)$. Every element $x\in \X$ can be written as $x = (x^{-1})^{-1}$, where $x^{-1}\in \X$ since $\X$ is a partial subgroup. Therefore, for every $x \in \X$ we get $(x,n),(n,x) \in \D(\L)$.

\smallskip

(b) By Axiom (SD2) in Definition \ref{internal.par}, we get $\L=\X \N$. Suppose $g \in \X \cap \N$. Then $g = \Pi(g,\One) = \Pi(\One,g)$. Since every element of $\L$ can be written in a unique way as a product of an element in $\X$ and an element in $\N$ we deduce that $g=\One$. Hence $\X \cap\N = \One$.

\smallskip

(c) Let $x\in\X$ and $n\in\N$. As $\X\subseteq N_\L(\N)$, we have $(x^{-1},n,x)\in\D(\L)$. So by Lemma~\ref{L:PartialGroup}(b), we have $w:=(x,x^{-1},n,x)\in\D(\L)$. So by axiom (PG3), $\Pi(n,x)=\Pi(w)=\Pi(x,n^x)$. Applying this property with $(n^{-1},x^{-1})$ in place of $(n,x)$ and using Lemma~\ref{L:PartialGroup}(g), we obtain $\Pi(x,n)^{-1}=\Pi(n^{-1},x^{-1})=\Pi(x^{-1},(n^{-1})^{x^{-1}})$. 

\smallskip

(d) Let $m\in\N$ and $g\in\L$ with $w:=(g^{-1},m,g)\in\D(\L)$. We need to show that $m^g\in\N$. By (SD2), there is $x\in\X$ and $n\in\N$ such that $g=\Pi(x,n)$. By (c), $g^{-1}=\Pi(x^{-1},(n^{-1})^{x^{-1}})$. Moreover, $m=\Pi(\One,m)$ with $\One\in\X$. As $w\in\D(\L)$, we have $w_\X,w_\N\in\D(\L)$ and $\Pi(w)=\Pi(\Pi(w_\X),\Pi(w_\X))$ by Axiom (SD3). By Lemma~\ref{L:PartialGroup}(f), we have $\Pi(w_\X)=\One$. Note also that $\Pi(w_\N)\in\N$, as $\N$ is a partial subgroup. Hence, $m^g=\Pi(w)=\Pi(\One,\Pi(w_\N))=\Pi(w_\N)\in\N$. This proves (d). 
\end{proof}

We show next that external semidirect products of partial groups provide natural examples of internal semidirect products.

\begin{theorem}\label{external.internal.par}
If $\X$ and $\N$ are partial groups and $\phi\colon \X\rightarrow \Aut(\N)$ is a homomorphism of partial groups, then the semidirect product $\L = \X \ltimes_\varphi \N$ is the internal semidirect product of $(\X,\One)$ with $(\One,\N)$. Moreover, $(\One,n)^{(x,\One)}=(\One,n^{x^\phi})$ and $(x,\One)(\One,n)=(x,n)$ for all $x\in \X$, $n\in\N$.
\end{theorem}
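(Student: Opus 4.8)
The strategy is to verify the three axioms (SD1), (SD2), (SD3) of Definition~\ref{internal.par} for the partial subgroups $\X' := (\X,\One)$ and $\N' := (\One,\N)$ of $\L = \X\ltimes_\varphi\N$, using Lemma~\ref{subgroups} to know these are partial subgroups and Lemma~\ref{subgroups}(d) to transport computations back to $\X$ and $\N$. First I would record the key computational identity: for $x\in\X$ and $n\in\N$ one has $(x,\One)\circ(\One,n)\in\D(\L)$, since $w_{\X}=(x,\One)\in\D(\X)$ and $w_{\N}=(\One^{\One^\varphi},n)=(\One,n)\in\D(\N)$; moreover $\Pi((x,\One),(\One,n)) = (\Pi_\X(x,\One),\Pi_\N(\One,n)) = (x,n)$. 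This already gives the last displayed claim $(x,\One)(\One,n)=(x,n)$, and shows existence in (SD2): every element $(x,n)\in\L$ factors as $\Pi$ of an $\X'$-element and an $\N'$-element.

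For (SD1), I must show $\X'\subseteq \rN_\L(\N')$, i.e.\ for each $x\in\X$ and $n\in\N$, that $(x,\One)^{-1}\circ(\One,n)\circ(x,\One)\in\D(\L)$ and its product lies in $(\One,\N)$; unwinding the definition of $\D(\L)$ and of $(-)^{-1}$ on $\L$, the $\X$-component word is $(x^{-1},\One,x)\in\D(\X)$ (by Lemma~\ref{L:PartialGroup}(f)), and the $\N$-component word simplifies — using Lemma~\ref{L:ActionPartialGroup} and $\Pi_\X(x^{-1},\One,x)=\One$ — to a word in $\N$ whose product is $n^{x^\phi}$. This simultaneously proves the first displayed claim $(\One,n)^{(x,\One)} = (\One,n^{x^\phi})$. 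For uniqueness in (SD2): if $(x,\One)\circ(\One,n)$ and $(x',\One)\circ(\One,n')$ both have product $(a,b)$, then comparing $\X$- and $\N$-components via the formula above forces $x=x'=a$ and then $n=n'=b$.

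The substantive axiom is (SD3). Given $w = (g_1,\dots,g_k)\in\W(\L)$ with $g_i = \Pi((x_i,\One),(\One,n_i)) = (x_i,n_i)$, the word $w_{\X'}$ in the sense of Definition~\ref{internal.par} is $((x_1,\One),\dots,(x_k,\One))$, and (when defined) $w_{\N'}$ is $((\One,n_1^{\Pi((x_2,\One),\dots)}),\dots,(\One,n_k))$. The point is to match these against the external-product data: I would show $(w_{\X'})_\X = (x_1,\dots,x_k) = w_\X$ and, using Lemma~\ref{internal.prop.par}(c) together with Theorem's own conjugation formula $(\One,n)^{(x,\One)}=(\One,n^{x^\phi})$ just proved, that the $\N$-component of $w_{\N'}$ is exactly $w_\N = (n_1^{(x_2\cdots x_k)^\varphi},\dots,n_k)$; then $w_{\X'},w_{\N'}\in\D(\L)$ iff $w_\X\in\D(\X)$ and $w_\N\in\D(\N)$ iff $w\in\D(\L)$, by Lemma~\ref{subgroups}(d) and the definition of $\D(\L)$. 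For the product formula in (SD3), I compute $\Pi(\Pi(w_{\X'}),\Pi(w_{\N'})) = \Pi((\Pi_\X(w_\X),\One),(\One,\Pi_\N(w_\N))) = (\Pi_\X(w_\X),\Pi_\N(w_\N)) = \Pi(w)$, the middle step being the key identity from the first paragraph.

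The main obstacle is bookkeeping in (SD3): the recursive twisting in the definitions of $w_\N$ (external, Definition~\ref{D:ExternalNotationwxwn}) and of $w_{\N'}$ (internal, Definition~\ref{internal.par}) must be shown to agree, which requires carefully tracking that $\Pi((x_{i+1},\One),\dots,(x_k,\One))$ acts on $(\One,n_i)$ the same way $(x_{i+1}\cdots x_k)^\varphi$ acts on $n_i$ — this follows by induction from the already-established formula $(\One,n)^{(x,\One)}=(\One,n^{x^\phi})$ combined with Lemma~\ref{L:ActionPartialGroup}(c), but it is where the care must go. Everything else is a direct unwinding of definitions.
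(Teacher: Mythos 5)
Your plan is correct and follows essentially the same route as the paper's proof: establish the two displayed identities first by direct computation of $s_\X,s_\N$ and $t_\X,t_\N$, then verify (SD1)--(SD3) by matching the internal decomposition words $w_{(\X,\One)}$, $w_{(\One,\N)}$ against the external data $w_\X$, $w_\N$ via the conjugation formula. One small caution: your appeal to Lemma~\ref{internal.prop.par}(c) in the (SD3) step would be circular (that lemma assumes $\L$ is already an internal semidirect product), but it is also unnecessary, since $\Pi((x_{i+1},\One),\dots,(x_k,\One))=(x_{i+1}\cdots x_k,\One)$ follows directly from the definition of the external product, after which the just-proved formula $(\One,n)^{(x,\One)}=(\One,n^{x^\phi})$ does all the work, exactly as in the paper.
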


\begin{proof}
First notice that $(\X, \One)$ and $(\One,\N)$ are partial subgroups of $\L$ by Lemma \ref{subgroups}(b),(c).
Let $x\in \X$ and $n\in\N$. Set $s:=((x,\One)^{-1},(\One,n),(x,\One))$. Note that $(x,\One)^{-1} = (x^{-1},\One)$ and 
\[ s_\X=(x^{-1},\One,x)\mbox{ and }s_\N = (\One, n^{x^\varphi}, \One).\]
By axioms (PG1),(PG2) and Lemma~\ref{L:PartialGroup}(d), we have $s_\N\in\D(\N)$ with $\Pi_\N(s_\N)=n^{x^\phi}$. Moreover, by Lemma~\ref{L:PartialGroup}(f), we have $s_\X\in\D(\X)$ with $\Pi_\X(s_\X)=\One$. Thus, $s\in\D(\L)$ and $(\One,n)^{(x,\One)}=\Pi(s)=(\Pi_\X(s_\X),\Pi_\N(s_\N))=(\One,n^{x^\phi})$. In particular, $(\X,\One) \subseteq \mathrm{N}_\L((\One,\N))$. 

\smallskip

Using axiom (PG1) and Lemma~\ref{L:PartialGroup}(d) observe that, for every $x\in \X$ and every $n \in\N$, we have $t:=( (x,\One), (\One,n) ) \in \D(\L)$, since $t_\X:=(x,\One)\in\D(\X)$ and $t_\N:=(\One,n)\in\D(\N)$, and that moreover $\Pi( (x,\One), (\One,n))=\Pi(t)=(\Pi(t_\X),\Pi(t_\N))= (x,n)$. It follows that every element of $\L$ can be written in a unique way as a product of an element in $(\X,\One)$ and an element in $(\One,\N)$. 

\smallskip

Let $w = ( (x_1, n_1), \dots,  (x_k,n_k)) \in \W(\L)$ with $x_1,\dots,x_k\in\X$ and $n_1,\dots,n_k\in\N$. Using the notation introduced in  Definition~\ref{internal.par} set 
\[u:=w_{(\X,\One)}=((x_1,\One),\dots,(x_k,\One)).\] 
Now using the notation introduced in Definition~\ref{D:ExternalNotationwxwn}, we have $u_\X=(x_1,\dots,x_k)=w_\X$ and (if $u_\X\in\D(\X)$ and thus $u_\N$ is defined) $u_\N:=(\One,\dots,\One)\in\D(\N)$ by Lemma~\ref{L:PartialGroup}(e). So by definition of $\D(\L)$, we have $w_\X\in\D(\X)$ if and only if $u=w_{(\X,\One)}\in\D(\L)$. If so, then again using the notation introduced in Definition~\ref{internal.par},  
\[v:=w_{(\One,\N)}:=( (\One,n_1)^{\Pi( (x_2,\One), \dots, (x_k,\One) )},(\One,n_2)^{\Pi( (x_3,\One), \dots, (x_k,\One) )}, \dots, (\One,n_k) ) \in \W(\L).\]
By the property we proved above, we have $v=((\One,n_1^{(x_2\dots x_k)^\phi}),(\One,n_2^{(x_3\dots x_k)^\phi}),\dots,(\One,n_k))$. So $v_\X=(\One,\dots,\One)\in\D(\X)$ by Lemma~\ref{L:PartialGroup}(e), and 
\[w_\N = (n_1^{(x_2\dots x_k)^\phi},n_2^{(x_3\dots x_k)^\phi},\dots,n_k)=v_\N.\]
So, by definition of $\D(\L)$, we have $w_\N\in\D(\N)$ if and only if $v=w_{(\One,\N)}\in \D(\L)$. Altogether, as $w\in\D(\L)$ if and only $w_\X\in\D(\X)$ and $w_\N\in\D(\N)$, it follows that $w\in \D(\L)$ if and only if $u=w_{(\X,\One)}\in\D(\L)$ and $v=w_{(1,\N)} \in \D(\L)$. Moreover, if this is the case, we have 
\begin{eqnarray*}
\Pi(w) &=& (\Pi_\X(w_\X), \Pi_\N(w_\N)) \\
&=& \Pi((\Pi_\X(w_\X),\One),(\One,\Pi_\N(w_\N)))\\
&=& \Pi((\Pi_\X(u_\X),\Pi(u_\N)),(\Pi(v_\X),\Pi_\N(v_\N)))\\
&=& \Pi( \Pi(u),\Pi(v))\\
&=& \Pi(\Pi(w_{(\X,\One)}),\Pi(w_{(\One,\N)})),
\end{eqnarray*}
where the third equality uses Lemma~\ref{L:PartialGroup}(e). This proves that $\L$ is the internal semidirect product of $(\X,\One)$ with $(\One,\N)$.
\end{proof}

We will now show that internal semidirect products also lead to external semidirect products. More precisely, given a partial group $\L$ which is an internal semidirect product of a partial subgroup $\X$ with a partial subgroup $\N$, we show that $\X$ acts on $\N$ via conjugation.

\begin{lemma}\label{L:InternalExternalAction}
Let $\X$ and $\N$ be partial subgroups of a partial group $\L$ such that $\L$ is the internal semidirect product of $\X$ with $\N$. Then for every $x\in\X$ the map $c_x\colon \N\rightarrow \N,n\mapsto n^x$ is well-defined and an automorphism of the partial group $\N$. Moreover, $\phi\colon \X\rightarrow \Aut(\N),x\mapsto c_x$ is a homomorphism of partial groups. In particular, for all $x,y\in\X$ and $n\in\N$, we have $(n^x)^y=n^{xy}$. 
\end{lemma}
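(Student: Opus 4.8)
The plan is to verify the three assertions in turn, relying on Definition~\ref{internal.par} (especially (SD1)--(SD3)) and Lemma~\ref{internal.prop.par}. First, for fixed $x\in\X$, I would show $c_x\colon\N\to\N$, $n\mapsto n^x$, is well-defined: by Lemma~\ref{internal.prop.par}(a) we have $(x^{-1},n,x)\in\D(\L)$ for every $n\in\N$, so $n\in\D(x)$ and $n^x$ is defined; since $\X\subseteq N_\L(\N)$ by (SD1), we get $n^x\in\N$. Next I would check $c_x$ is a homomorphism of the partial group $\N$. Given $w=(n_1,\dots,n_k)\in\D(\N)=\D(\L)\cap\W(\N)$, I want $w^{c_x}\in\D(\N)$ and $\Pi(w)^{c_x}=\Pi_\N(w^{c_x})$. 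The natural move is to consider the word $(x^{-1})\circ w\circ(x)\in\W(\L)$; using (PG3)/Lemma~\ref{L:PartialGroup}(h) one pushes the conjugation by $x$ through the product, expressing $\Pi(x^{-1},n_1,\dots,n_k,x)$ both as $\Pi(n_1^x,\dots,n_k^x)$ and as $\Pi(x^{-1},\Pi(n_1,\dots,n_k),x)=\Pi(w)^{c_x}$. One must be slightly careful that all the intermediate words lie in $\D(\L)$; this follows by inserting $x\,x^{-1}$ pairs as in the proof of Lemma~\ref{L:ConstructLocality0}(*). This gives that $c_x$ is an endomorphism of $\N$; since $c_{x^{-1}}$ is its two-sided inverse (because $(n^x)^{x^{-1}}=n$, again from (PG3)), $c_x\in\Aut(\N)$.

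The second step is to show $\phi\colon\X\to\Aut(\N)$, $x\mapsto c_x$, is a homomorphism of partial groups, where $\Aut(\N)$ carries its usual (group, hence partial group) structure with $\D(\Aut(\N))=\W(\Aut(\N))$. So condition (1) of Definition~\ref{D:PartialHom} is automatic, and I only need: for $u=(x_1,\dots,x_k)\in\D(\X)=\D(\L)\cap\W(\X)$, we have $c_{\Pi(u)}=c_{x_1}\circ c_{x_2}\circ\cdots\circ c_{x_k}$ as maps $\N\to\N$. For a fixed $n\in\N$, evaluate: by Lemma~\ref{internal.prop.par}(a) and repeated use of (PG3), the word $(x_k^{-1},\dots,x_1^{-1},n,x_1,\dots,x_k)$ is in $\D(\L)$ and its product is on one hand $n^{c_{x_1}\circ\cdots\circ c_{x_k}}$ (peeling off one conjugation at a time) and on the other hand $\Pi(\Pi(u)^{-1},n,\Pi(u))=n^{c_{\Pi(u)}}$, using $\Pi(u)^{-1}=\Pi(u^{-1})$ from Lemma~\ref{L:PartialGroup}(g) and (PG3) to collapse $u^{-1}\circ(n)\circ u$. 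This is essentially the computation already carried out in the proof of Lemma~\ref{L:ConstructLocality0}, property (*), specialized to $X_i=\N$, and I would cite that pattern rather than redo it. The ``in particular'' clause $(n^x)^y=n^{xy}$ is then just the case $u=(x,y)$ of this identity together with $xy=\Pi(x,y)$.

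I expect the main obstacle to be purely bookkeeping: ensuring at each stage that the words involved genuinely lie in $\D(\L)$ before applying (PG3), since in a general partial group one cannot freely rearrange or insert letters. The clean way around this is to always work with a word of the shape $u^{-1}\circ(n)\circ u$ (which lies in $\D(\L)$ whenever $u\in\D(\L)$ and $n\in\D(\Pi(u))$, by Lemma~\ref{L:PartialGroup}(b) applied to $u^{-1}\circ u$ and then (PG3)), and to justify $n\in\D(\Pi(u))$ for $n\in\N$ from $\X\subseteq N_\L(\N)$ via Lemma~\ref{internal.prop.par}(a) — note $\Pi(u)\in\X$ since $\X$ is a partial subgroup. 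With that observation in hand, every other step is a routine application of (PG3) and Lemma~\ref{L:PartialGroup}(g),(h), and no genuinely new idea beyond what appears in the proof of Lemma~\ref{L:ConstructLocality0} is needed.
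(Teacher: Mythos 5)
Your overall architecture matches the paper's --- well-definedness of $c_x$ from (SD1), the (PG3) contractions identifying $\Pi(x^{-1},n_1,\dots,n_k,x)$ with both $\Pi(n_1^x,\dots,n_k^x)$ and $\Pi(w)^{c_x}$, bijectivity of $c_x$ via $c_{x^{-1}}$, and the evaluation of $c_{\Pi(v)}$ on $v^{-1}\circ(n)\circ v$ --- but the one step that genuinely needs the semidirect-product hypothesis is justified by an argument that is not valid in a partial group. Your parenthetical claim that $u^{-1}\circ(n)\circ u\in\D(\L)$ whenever $u\in\D(\L)$ and $n\in\D(\Pi(u))$ does not follow from Lemma~\ref{L:PartialGroup}(b) and (PG3): axiom (PG3) only lets you \emph{contract} a subword to its product, never expand $\Pi(u)$ back into the word $u$, and none of the cited rules lets you \emph{insert} an arbitrary letter $n$ (or an $x\,x^{-1}$ pair) into a word already known to lie in $\D(\L)$; Lemma~\ref{L:PartialGroup}(d) inserts only copies of $\One$, and Lemma~\ref{L:PartialGroup}(h) deletes $v\circ v^{-1}$ pairs rather than creating them. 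The appeal to the pattern of Lemma~\ref{L:ConstructLocality0}(*) also does not transfer: there, domain membership is verified via $\D=\D_\Delta$ and a chain of objects, a locality structure not assumed here. So as written you never legitimately establish that the long conjugation words lie in $\D(\L)$, which is the heart of the proof.

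The correct tool is (SD3), which you name at the outset but never deploy at the critical moment. The paper starts from the interleaved word $w=(x^{-1},n_1,x,x^{-1},n_2,x,\dots,x^{-1},n_k,x)$, computes $w_\X=(x^{-1},\One,x,\dots,x^{-1},\One,x)$ and $w_\N=(\One,n_1,\One,\dots,\One,n_k,\One)$, checks these lie in $\D(\L)$ by Lemma~\ref{L:PartialGroup}(f) and (d), and only then concludes $w\in\D(\L)$ from the ``if and only if'' in (SD3); afterwards (PG3) contracts $w$ down to $(n_1^x,\dots,n_k^x)$ and to $(x^{-1},\Pi(n_1,\dots,n_k),x)$. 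Note that applying (SD3) directly to your shorter word $(x^{-1},n_1,\dots,n_k,x)$ would be circular, since its $\N$-component is $(\One,n_1^x,\dots,n_k^x,\One)$, whose membership in $\D(\L)$ is exactly what is to be proved; the interleaved word, whose $\N$-component involves only the original $n_i$, is the right starting point. The same repair is needed in the second half for $w^*=v^{-1}\circ(n)\circ v$: Lemma~\ref{internal.prop.par}(a) and ``repeated use of (PG3)'' do not put it in $\D(\L)$; one must compute $w^*_\X=v^{-1}\circ(\One)\circ v$ and $w^*_\N=(\One,\dots,\One,n,\One,\dots,\One)$ and invoke (SD3).
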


\begin{proof}
 Note that $\N$ is a partial group with product defined on $\D(\N):=\D(\L)\cap\W(\N)$. Let $u=(n_1,\dots,n_k)\in\D(\N)$, $x\in\X$ and set \[w:=(x^{-1},n_1,x,x^{-1},n_2,x,\dots,x^{-1},n_k,x).\]
Note that $c_x$ is well-defined by (SD1). By abuse of notation, we also write $c_x$ for the induced map on words in $\N$; in particular $u^{c_x}=(n_1^{c_x},\dots,n_k^{c_x})$. We show first that $c_x$ is a homomorphism of partial groups by proving that $u^{c_x}\in\D(\L)$ and $\Pi(u^{c_x})=\Pi(u)^{c_x}$. As a first step, we prove that $w\in\D(\L)$. Note that, $x=\Pi(x,\One)$ and $x^{-1}=\Pi(x^{-1},\One)$ where $\One\in\N$. Similarly, for every $n\in\N$, we have $n=\Pi(\One,n)$ with $\One\in\X$. So we conclude
\[w_\X=(x^{-1},\One,x,x^{-1},\One,x,\dots,x^{-1},\One,x)\mbox{ and }w_\N=(\One,n_1,\One,\One,n_2,\One,\dots,\One,n_k,\One).\]
By Lemma~\ref{L:PartialGroup}(f), we have $w_\X\in\D(\L)$, and by part (d) of the same lemma, $w_\N\in\D(\L)$. Hence, by (SD3), we have \[w\in\D(\L).\] 
Using axiom (PG3) several times, we conclude $u^{c_x}=(n_1^x,n_2^x,\dots,n_k^x)\in\D(\L)$ and $\Pi(u^{c_x})=\Pi(w)=\Pi(x^{-1},n_1,\dots,n_k,x)=\Pi(x^{-1},\Pi(u),x)=\Pi(u)^x=\Pi(u)^{c_x}$. This proves that $c_x$ is a homomorphism of partial groups from $\N$ to $\N$. As $x\in\X$ was arbitrary, it follows that $c_{x^{-1}}$ is also a homomorphism of partial groups from $\N$ to $\N$. By \cite[Lemma~1.6(c)]{loc1}, $c_x$ is bijective with $(c_x)^{-1}=c_{x^{-1}}$. This implies that $c_x$ is an automorphism of $\N$.

\smallskip

Thus, we know now that the map $\phi\colon\X\rightarrow\Aut(\N),x\mapsto c_x$ is well-defined. It remains to show that $\phi$ is a homomorphism of partial groups. Note that $\X$ forms a partial group with product defined on $\D(\X):=\D(\L)\cap \W(\X)$. Let $v=(x_1,\dots,x_l)\in\D(\X)$. As $\Aut(\N)$ is a group, $v^{\phi}:=(x_1^\phi,\dots,x_l^\phi)=(c_{x_1},\dots,c_{x_l})$ is in the domain $\W(\Aut(\N))$ of the naturally defined multivariable product $\Pi_{\Aut(\N)}$ on $\Aut(\N)$. So it remains only to show that $\Pi_{\Aut(\N)}(v^\phi)=\Pi(v)^\phi$, i.e. that $c_{x_1}c_{x_2}\cdots c_{x_l}=c_{\Pi(v)}$. Let $n\in\N$ and set 
\[w^*=(x_l^{-1},\dots,x_1^{-1},n,x_1,\dots,x_l)=v^{-1}\circ (n) \circ v.\]
Then $w^*_\X=(x_l^{-1},\dots,x_1^{-1},\One,x_1,\dots,x_l)=v^{-1}\circ (\One)\circ v$ and $w^*_\N=(\One,\dots,\One,n,\One,\dots,\One)$. 
Using Lemma~\ref{L:PartialGroup}(d) as well as axioms (PG4) and (PG1) of a partial group, we see that $w^*_\X\in\D(\L)$ and $w^*_\N\in\D(\L)$. Hence, $w^*\in\D(\L)$ by (SD3). By \cite[Lemma~1.4(f)]{loc1}, we have $\Pi(v^{-1})=\Pi(v)^{-1}$. Using axiom (PG3) several times, we conclude $n^{c_{x_1}c_{x_2}\cdots c_{x_l}}=\Pi(w^*)=\Pi(\Pi(v^{-1}),n,\Pi(v))=\Pi(\Pi(v)^{-1},n,\Pi(v))=n^{\Pi(v)}=n^{c_{\Pi(v)}}$. As $n\in\N$ was arbitrary, this shows $c_{x_1}c_{x_2}\cdots c_{x_l}=c_{\Pi(v)}$ as required. Thus, the proof is complete.
\end{proof}

\begin{cor}\label{C:CXNnormalX}
Suppose $\L$ is the internal semidirect product of a partial subgroup $\X$ with a partial subgroup $\N$. Then $\C_\X(\N)$ is a partial normal subgroup of $\X$.
\end{cor}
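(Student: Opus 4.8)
The plan is to reduce this to Lemma~\ref{L:ExternalCXNNormal} by way of the action constructed in Lemma~\ref{L:InternalExternalAction}. First I would invoke Lemma~\ref{L:InternalExternalAction} to obtain the homomorphism of partial groups $\phi\colon\X\rightarrow\Aut(\N)$, $x\mapsto c_x$; thus $\X$ acts on the partial group $\N$ via $\phi$ in the sense of Definition~\ref{D:PartialGroupAction}. Lemma~\ref{L:ExternalCXNNormal} then immediately yields that $\C_\X^\phi(\N)=\ker(\phi)$ is a partial normal subgroup of $\X$.

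It remains to identify $\C_\X^\phi(\N)$ with $\C_\X(\N)$. By construction $x^\phi=c_x$, so for $x\in\X$ and $n\in\N$ we have $n^{x^\phi}=n^{c_x}=n^x=\Pi(x^{-1},n,x)$, the conjugation computed in $\L$; hence $\C_\X^\phi(\N)=\{x\in\X\colon n^x=n\text{ for all }n\in\N\}$. On the other hand, $\C_\X(\N)$ is by definition the set of $x\in\X$ with $\N\subseteq\D(x)$ and $n^x=n$ for all $n\in\N$. Axiom (SD1) gives $\X\subseteq\mathrm{N}_\L(\N)$, so $\N\subseteq\D(x)$ holds automatically for every $x\in\X$, and the two descriptions coincide. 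Therefore $\C_\X(\N)=\C_\X^\phi(\N)$ is a partial normal subgroup of $\X$.

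There is essentially no serious obstacle here: the real content was already carried out in Lemmas~\ref{L:InternalExternalAction} and~\ref{L:ExternalCXNNormal}, and the only point that requires a moment's care is that the centralizer $\C_\X(\N)$ taken inside $\L$ matches the ``action centralizer'' $\C_\X^\phi(\N)$ — which it does precisely because the action $\phi$ is given by conjugation in $\L$ and (SD1) renders the domain condition $\N\subseteq\D(x)$ vacuous.
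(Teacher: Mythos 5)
Your proposal is correct and follows exactly the paper's own argument: apply Lemma~\ref{L:InternalExternalAction} to obtain the conjugation action $\phi\colon\X\rightarrow\Aut(\N)$, then invoke Lemma~\ref{L:ExternalCXNNormal}, identifying $\C_\X^\phi(\N)$ with $\C_\X(\N)$ via (SD1). The extra care you take in checking that the two centralizers coincide is a point the paper passes over silently, but the substance is the same.
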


\begin{proof}
By Lemma~\ref{L:InternalExternalAction}, the map $\phi\colon\X\rightarrow \Aut(\N),x\mapsto c_x$ is a homomorphism of partial groups, i.e. $\X$ acts on $\N$ via $\phi$. So by Lemma~\ref{L:ExternalCXNNormal}, $\C_\X(\N):=\{x\in\X\colon n^x=n\mbox{ for all }n\in\N\}=\C_\X^\phi(\N)$ is a partial normal subgroup of $\X$.
\end{proof}

Lemma~\ref{L:InternalExternalAction} means that, given a partial group $\L$ which is an internal semidirect product of a partial subgroup $\X$ with a partial subgroup $\N$, it is indeed true that $\X$ acts on $\N$ in the sense of Definition~\ref{D:PartialGroupAction}. Thus, we are in a situation where we can also form the external semidirect product of $\X$ with $\N$. The next goal will be to show that such an external semidirect product will be isomorphic to $\L$. We will need the following lemma.  

\begin{lemma}\label{iso.par}
Let $\L$ and $\L'$ be partial groups which are internal semidirect products of $\X$ with $\N$ and of $\X'$ with $\N'$, respectively. Suppose that there exist isomorphisms of partial groups $\alpha \colon \X \rightarrow \X'$ and $\beta \colon \N \rightarrow \N'$ such that
\[  (n^\beta)^{x^\alpha} = (n^x)^\beta \text{ for every } x\in \X \text{ and every } n\in \N.\]
Then the mapping
\begin{align*} \varphi \colon \L \longrightarrow ~ \L'\mbox{ defined by }\Pi(x,n) \mapsto ~ \Pi'(x^\alpha, n^\beta) \mbox{ for all }x\in\X,\;n\in\N
\end{align*}
is an isomorphism of partial groups.
\end{lemma}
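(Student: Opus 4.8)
The plan is to verify directly that $\varphi$ is well-defined, bijective, and that both $\varphi$ and its inverse are homomorphisms of partial groups; then Lemma~\ref{L:PartialIso} yields that $\varphi$ is an isomorphism. First I would note that $\varphi$ is well-defined and bijective: by axiom (SD2) applied to $\L$, every $g\in\L$ has a unique expression $g=\Pi(x,n)$ with $x\in\X$, $n\in\N$, so the assignment $g\mapsto\Pi'(x^\alpha,n^\beta)$ makes sense; since $\alpha$ and $\beta$ are bijections and $\L'$ satisfies (SD2), the two-sided inverse is $g'=\Pi'(x',n')\mapsto\Pi(x'^{\alpha^{-1}},n'^{\beta^{-1}})$, which is of the same shape with $(\X',\N',\L')$ in place of $(\X,\N,\L)$ and $(\alpha^{-1},\beta^{-1})$ satisfying the analogous intertwining identity (this follows from the hypothesis $(n^\beta)^{x^\alpha}=(n^x)^\beta$ by substituting $n\mapsto n^{\beta^{-1}}$, $x\mapsto x^{\alpha^{-1}}$). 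So it suffices to show $\varphi$ is a homomorphism of partial groups, and by symmetry the same argument applied to $\varphi^{-1}$ will finish the proof.

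To show $\varphi$ is a homomorphism, take a word $w=(\Pi(x_1,n_1),\dots,\Pi(x_k,n_k))\in\D(\L)$ with $x_i\in\X$, $n_i\in\N$. By (SD3) for $\L$, membership $w\in\D(\L)$ is equivalent to $w_\X=(x_1,\dots,x_k)\in\D(\L)$ and $w_\N=(n_1^{\Pi(x_2,\dots,x_k)},\dots,n_k)\in\D(\L)$, and then $\Pi(w)=\Pi(\Pi(w_\X),\Pi(w_\N))$. Now I compute $w^\varphi$. By definition of $\varphi$, $w^\varphi=(\Pi'(x_1^\alpha,n_1^\beta),\dots,\Pi'(x_k^\alpha,n_k^\beta))$, so $(w^\varphi)_\X=(x_1^\alpha,\dots,x_k^\alpha)=(w_\X)^\alpha$. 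Since $\alpha$ is an isomorphism of partial subgroups $\X\to\X'$ — here I would note that $\D(\X)=\D(\L)\cap\W(\X)$ and similarly for $\X'$ — we get $w_\X\in\D(\L)\iff (w^\varphi)_\X\in\D(\L')$, and in that case $\Pi'((w^\varphi)_\X)=\Pi(w_\X)^\alpha$. Assuming $w_\X\in\D(\L)$ so both sides are defined, the key point is to identify $(w^\varphi)_\N$. Unwinding the definition, $(w^\varphi)_\N=\bigl((n_1^\beta)^{\Pi'(x_2^\alpha,\dots,x_k^\alpha)},\dots,n_k^\beta\bigr)$; using $\Pi'(x_2^\alpha,\dots,x_k^\alpha)=\Pi(x_2,\dots,x_k)^\alpha$ and the intertwining hypothesis (applied with $x=\Pi(x_2,\dots,x_k)$, which lies in $\X$ since $\X$ is a partial subgroup), each entry $(n_i^\beta)^{\Pi(x_{i+1},\dots,x_k)^\alpha}$ equals $(n_i^{\Pi(x_{i+1},\dots,x_k)})^\beta$, so $(w^\varphi)_\N=(w_\N)^\beta$. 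Since $\beta\colon\N\to\N'$ is an isomorphism of partial subgroups, $w_\N\in\D(\L)\iff(w^\varphi)_\N\in\D(\L')$ with $\Pi'((w^\varphi)_\N)=\Pi(w_\N)^\beta$.

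Combining, $w\in\D(\L)$ iff $(w^\varphi)_\X\in\D(\L')$ and $(w^\varphi)_\N\in\D(\L')$, which by (SD3) for $\L'$ is equivalent to $w^\varphi\in\D(\L')$; hence $\D(\L)^\varphi\subseteq\D(\L')$ (in fact equality, but we only need the inclusion here, the reverse being obtained from $\varphi^{-1}$). And when $w\in\D(\L)$,
\[
\Pi(w)^\varphi=\Pi\bigl(\Pi(w_\X),\Pi(w_\N)\bigr)^\varphi=\Pi'\bigl(\Pi(w_\X)^\alpha,\Pi(w_\N)^\beta\bigr)=\Pi'\bigl(\Pi'((w^\varphi)_\X),\Pi'((w^\varphi)_\N)\bigr)=\Pi'(w^\varphi),
\]
where the second equality is the definition of $\varphi$ (valid since $\Pi(w_\X)\in\X$ and $\Pi(w_\N)\in\N$, as $\X,\N$ are partial subgroups, so $\Pi(w)=\Pi(\Pi(w_\X),\Pi(w_\N))$ is precisely the canonical (SD2)-decomposition) and the last is (SD3) for $\L'$. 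Thus $\varphi$ is a homomorphism; applying the same reasoning to $\varphi^{-1}$ and invoking Lemma~\ref{L:PartialIso} completes the proof. The main obstacle is bookkeeping: correctly matching the definitions of $w_\X,w_\N$ for $\L$ (Definition~\ref{internal.par}) with those for the image word under $\varphi$, and making sure that every application of $\alpha$ or $\beta$ respects the restricted domains $\D(\X)=\D(\L)\cap\W(\X)$ etc.; the intertwining hypothesis is exactly what is needed to push the $\N$-part through, so once the indices are lined up the verification is routine.
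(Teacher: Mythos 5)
Your proposal is correct and follows essentially the same route as the paper's proof: well-definedness and bijectivity via (SD2) (the paper additionally cites Lemma~\ref{internal.prop.par}(a) to ensure $(x^\alpha,n^\beta)\in\D(\L')$), reduction to the homomorphism property via Lemma~\ref{L:PartialIso} and the symmetric argument for $\varphi^{-1}$, the identification $(w^\varphi)_{\X'}=(w_\X)^\alpha$ and $(w^\varphi)_{\N'}=(w_\N)^\beta$ via the intertwining hypothesis, and the final computation through (SD3). No substantive differences.
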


\begin{proof} 
Note that for every $x\in \X$ and $n\in \N$ we have $(x^\alpha, n^\beta) \in \D(\L')$ by Lemma~\ref{internal.prop.par}(a). This fact together with (SD2) guarantees that the mapping $\varphi$ is well defined. It's also clear that $\varphi$ is bijective. By Lemma~\ref{L:PartialIso}, $\alpha^{-1}$ and $\beta^{-1}$ are isomorphisms of partial groups, and $\phi$ is an isomorphism if $\phi$ and $\phi^{-1}$ are homomorphisms or partial groups. Note that $\phi^{-1}$ is the map
\[\phi^{-1}\colon\L'\longrightarrow \L,\;\Pi(x',n')\mapsto \Pi((x')^{\alpha^{-1}},(n')^{\beta^{-1}}).\]
Moreover, if $x'\in\X'$ and $n'\in\N'$, then $x=(x')^{\alpha^{-1}}\in\X$ and $n=(n')^{\beta^{-1}}\in\N$, so by assumption $(n^x)^\beta=(n^\beta)^{x^\alpha}=(n')^{x'}$. Hence,
\[((n')^{\beta^{-1}})^{(x')^{\alpha^{-1}}}=n^x=((n')^{x'})^{\beta^{-1}}.\]
So it is enough to show that $\phi$ is a homomorphism of partial groups, as it will then follow similarly with $\alpha^{-1}$, $\beta^{-1}$ and $\phi^{-1}$ in the roles of $\alpha$, $\beta$ and $\phi$ that $\phi^{-1}$ is a homomorphism of partial groups. To prove that $\phi$ is a homomorphism of partial groups, let 
\[w= (\Pi(x_1,n_1), \dots, \Pi(x_k,n_k) )  \in \D(\L)\]
with $x_1,\dots,x_k\in\X$ and $n_1,\dots,n_k\in\N$. Recall from Definition~\ref{internal.par} that 
\[w_\X=(x_1,\dots,x_k)\mbox{ and }w_\N:=( n_1^{\Pi(x_2, \dots, x_k)}, \dots, n_{k-1}^{x_k}, n_k).\]
Here, as $w\in\D(\L)$, $w_\X$ is an element of $\D(\L)$ by (SD3) and thus $w_\N$ is well-defined. Moreover, again by (SD3), we have  $w_\N\in\D(\L)$. As $w_\X\in\D(\L)\cap\W(\X)=\D(\X)$, we have $(x_{i+1},x_{i+2},\dots,x_k)\in\D(\X)$ for $i=1,\dots,k-1$ and thus, as $\alpha$ is a homomorphism of partial groups, $(x_{i+1}^\alpha,x_{i+2}^\alpha,\dots,x_k^\alpha)\in\D(\X')$. Define
\[v := ( (n_1^\beta)^{\Pi'(x_2^\alpha, \dots, x_k^\alpha)}, \dots, (n_{k-1}^\beta)^{x_k^\alpha}, n_k^\beta) \in \W(\N').\]

\smallskip

\emph{Step~1:} We show that $v =(w_\N)^\beta$. By assumption, for every $n\in\N$ and $x\in \X$, we have $(n^x)^\beta=(n^\beta)^{x^\alpha}$. Moreover, for every $i=1,\dots,k-1$, we have $\Pi'(x_{i+1}^\alpha,x_{i+2}^\alpha,\dots,x_k^\alpha)=\Pi(x_{i+1},x_{i+2},\dots,x_k)^\alpha$ as $w_\X\in\D(\X)$ and $\alpha$ is a homomorphism of partial groups. Hence, it follows that $(n_i^{\Pi(x_{i+1},\dots,x_k)})^\beta=(n_i^\beta)^{\Pi(x_{i+1},x_{i+2},\dots,x_k)^\alpha}=(n_i^\beta)^{\Pi'(x_{i+1}^\alpha,x_{i+2}^\alpha,\dots,x_k^\alpha)}$ for all $i=1,\dots,k-1$. This implies $(w_\N)^\beta=v$.

\smallskip

\emph{Step~2:} We show $(w^\phi)_{\N'}=v=(w_\N)^\beta$, $(w^\phi)_{\X'}=(w_\X)^\alpha$ and $w^\phi\in\D(\L')$. As $w\in\D(\L)$ was arbitrary, this shows $\D(\L)^\phi\subseteq\D(\L')$.

\smallskip

Recall first that, by Step~1, we have $v=(w_\N)^\beta$. Moreover, as remarked above, $w_\N \in \D(\L)$ and thus $w_\N\in\D(\L)\cap\W(\N)=\D(\N)$. Since $\beta$ is a homomorphism of partial groups we deduce that $v \in \D(\N')=\D(\L')\cap\W(\N')$. Note that $w^\phi=  (\Pi'(x_1^\alpha,n_1^\beta), \dots, \Pi'(x_k^\alpha,n_k^\beta) )$ and so (using again the notation introduced in Definition~\ref{internal.par}) $(w^\phi)_{\N'}=v\in\D(\L')$. Observe also that $(w^\phi)_{\X'}=(x_1^\alpha,\dots,x_k^\alpha)=(x_1,\dots,x_k)^\alpha=(w_\X)^\alpha\in\D(\X')\subseteq\D(\L')$ as $w_\X\in\D(\X)$ and $\alpha\colon \X\rightarrow \X'$ is a homomorphism of partial groups. So using (SD3), we conclude that $w^\phi\in\D(\L')$.

\smallskip

\emph{Step~3:} We complete the proof that $\phi$ is a homomorphism of partial groups by showing that $\Pi'(w^\phi)=\Pi(w)^\phi$. By Step~2, we have $(w^\phi)_{\X'}=(w_\X)^\alpha$ and $(w^\phi)_{\N'}=v=(w_\N)^\beta$. We conclude 
\begin{eqnarray*}
\Pi'(w^\varphi) &=& \Pi'(\Pi'((w^\phi)_{\X'}),\Pi'((w^\phi)_{\N'}))\\
&=& \Pi'(\Pi'((w_\X)^\alpha),\Pi'((w_\N)^\beta))\\
&=& \Pi'(\Pi(w_\X)^\alpha,\Pi(w_\N)^\beta)\\
&=& \Pi(\Pi(w_\X),\Pi(w_\N))^\phi\\
&=& \Pi(w)^\phi,
\end{eqnarray*}
where the first and last equality use that (SD3) holds in $\L'$ and $\L$ respectively, and the third equality uses that $\alpha$ and $\beta$ are homomorphisms of partial groups. So $\Pi(w)^\varphi =\Pi'(w^\varphi)$.
As $w$ was arbitrary, this completes the proof that $\varphi$ is an isomorphism of partial groups.
\end{proof}

\begin{theorem}\label{int.ext.iso.par}
Let $\L$ be the internal semidirect product of $\X$ with $\N$. Then $\L$ is isomorphic to $\L'=\X \ltimes_\varphi \N$, where the action $\varphi\colon \X\rightarrow\Aut(\N),x\mapsto c_x$ is the map defined in Lemma~\ref{L:InternalExternalAction}.
\end{theorem}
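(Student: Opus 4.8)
The strategy is to apply Lemma~\ref{iso.par} with $\X'=(\X,\One)$, $\N'=(\One,\N)$, and $\L'=\X\ltimes_\varphi\N$ (regarded as the internal semidirect product of $(\X,\One)$ with $(\One,\N)$, as established in Theorem~\ref{external.internal.par}). To invoke that lemma I need two isomorphisms of partial groups $\alpha\colon\X\to(\X,\One)$ and $\beta\colon\N\to(\One,\N)$ that are compatible with the respective conjugation actions. The natural candidates are the maps $\alpha\colon x\mapsto(x,\One)$ and $\beta\colon n\mapsto(\One,n)$, which are isomorphisms of partial groups by Lemma~\ref{subgroups}(d). Once these are in place, the conclusion of Lemma~\ref{iso.par} gives an isomorphism $\L\to\L'$ sending $\Pi(x,n)\mapsto\Pi'((x,\One),(\One,n))$, which by the last sentence of Theorem~\ref{external.internal.par} equals $(x,n)$; thus $\L\cong\X\ltimes_\varphi\N$.

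First I would invoke Theorem~\ref{external.internal.par} to record that $\L'=\X\ltimes_\varphi\N$ is the internal semidirect product of $(\X,\One)$ with $(\One,\N)$, and recall from it that $(x,\One)(\One,n)=(x,n)$ and that conjugation in $\L'$ satisfies $(\One,n)^{(x,\One)}=(\One,n^{x^\varphi})$ for all $x\in\X$, $n\in\N$. Meanwhile $\L$ itself is the internal semidirect product of $\X$ with $\N$ by hypothesis, and by Lemma~\ref{L:InternalExternalAction} the action $\varphi$ in the statement is precisely $x\mapsto c_x$, so conjugation in $\L$ satisfies $n^{(x\text{ inside }\L)}=n^{x^\varphi}$. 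Then I take $\alpha$ and $\beta$ as above, cite Lemma~\ref{subgroups}(d) for their being isomorphisms of partial groups.

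The one genuine thing to check is the compatibility hypothesis of Lemma~\ref{iso.par}, namely $(n^\beta)^{x^\alpha}=(n^x)^\beta$ for all $x\in\X$, $n\in\N$ — where on the left the conjugation is computed inside $\L'=\X\ltimes_\varphi\N$ and on the right inside $\L$. The left side is $(\One,n)^{(x,\One)}$, which by Theorem~\ref{external.internal.par} equals $(\One,n^{x^\varphi})$. The right side is $(n^x)^\beta=(n^{x^\varphi})^\beta=(\One,n^{x^\varphi})$, where the first equality is Lemma~\ref{L:InternalExternalAction} (conjugation in $\L$ is exactly the action $\varphi$). Hence the two sides agree and the hypothesis of Lemma~\ref{iso.par} is satisfied.

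Applying Lemma~\ref{iso.par} now yields that $\psi\colon\L\to\L'$, $\Pi(x,n)\mapsto\Pi'(x^\alpha,n^\beta)=\Pi'((x,\One),(\One,n))=(x,n)$, is an isomorphism of partial groups; the last equality is the explicit formula $(x,\One)(\One,n)=(x,n)$ from Theorem~\ref{external.internal.par}. This completes the proof. The only mild subtlety — which is really a bookkeeping point rather than a difficulty — is keeping straight that the symbol $x$ plays two roles (an element of $\X$, and the element $(x,\One)$ of $\L'$) and that ``$n^x$'' must be interpreted in the correct ambient partial group on each side of the compatibility identity; once Lemma~\ref{L:InternalExternalAction} is used to identify $\varphi$ with conjugation in $\L$, everything lines up, so I do not anticipate any real obstacle.
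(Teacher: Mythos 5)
Your proposal is correct and follows essentially the same route as the paper: both invoke Theorem~\ref{external.internal.par} to view $\X\ltimes_\varphi\N$ as an internal semidirect product of $(\X,\One)$ with $(\One,\N)$, take the embeddings $\alpha,\beta$ from Lemma~\ref{subgroups}(d), verify the compatibility identity $(n^\beta)^{x^\alpha}=(n^x)^\beta$ via the conjugation formula of Theorem~\ref{external.internal.par} together with $n^{x^\varphi}=n^x$ from Lemma~\ref{L:InternalExternalAction}, and conclude by Lemma~\ref{iso.par}. No gaps.
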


\begin{proof}
By Theorem~\ref{external.internal.par}, the partial group $\L'$  is the internal semidirect product of $(\X,1)$ with $(1,\N)$. Consider the natural embeddings  $\alpha \colon \X \hookrightarrow (\X,1)$ and $\beta \colon \N \hookrightarrow (1,\N)$ which are by Lemma~\ref{subgroups}(d) isomorphisms of partial groups. If $n\in \N$ and $x\in \X$ then $(n^\beta)^{x^\alpha} = (1,n)^{(x,1)} = (1, n^x) = (n^x)^\beta$, where the second equality uses the formula given in Theorem~\ref{external.internal.par} and the fact that $n^{x^\phi}=n^x$ by definition of $\phi$. Therefore, by Lemma \ref{iso.par}, we conclude that $\L$ and $\L'$ are isomorphic as partial groups.
\end{proof}

The following lemma can be seen as a version of Lemma~\ref{subgroups}(a) for internal semidirect products.

\begin{lemma}\label{L:subgroupsInternal}
Let $\L$ be a partial group which is the internal semidirect product of a partial subgroup $\X$ with a partial subgroup $\N$. Let $\Y$ be a partial subgroup of $\X$ and $\M$ be a partial subgroup of $\N$ such that $\Y\subseteq\rN_\X(\M)$. Then $\Y\M$ is a partial subgroup of $\L$.
\end{lemma}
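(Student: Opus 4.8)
The plan is to verify the two defining conditions of a partial subgroup for $\Y\M$ directly, using the structure theory developed above. First I would observe that, since $\L$ is the internal semidirect product of $\X$ with $\N$, Lemma~\ref{L:InternalExternalAction} tells us that $\X$ acts on $\N$ via $\phi\colon\X\to\Aut(\N),\,x\mapsto c_x$, and Theorem~\ref{int.ext.iso.par} gives an isomorphism of partial groups $\varphi\colon \L\to \X\ltimes_\varphi\N$ sending $\Pi(x,n)$ to $(x,n)$. Under this isomorphism, $\Y\M$ corresponds to the Cartesian product $(\Y,\M)$ inside $\X\ltimes_\varphi\N$: indeed, by Lemma~\ref{internal.prop.par}(a) every element of $\Y\M$ can be written $\Pi(y,m)$ with $y\in\Y$, $m\in\M$, and these map to $(y,m)$. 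Since the hypothesis $\Y\subseteq\rN_\X(\M)$ (conjugation being via $\phi$) is exactly the hypothesis of Lemma~\ref{subgroups}(a), that lemma shows $(\Y,\M)$ is a partial subgroup of $\X\ltimes_\varphi\N$.

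The remaining point is purely formal: the preimage of a partial subgroup under an isomorphism of partial groups is a partial subgroup. This is immediate from the definitions — if $\varphi\colon\L\to\L'$ is an isomorphism and $\H'\leq\L'$ is a partial subgroup, then $\H=\{g\in\L\colon g^\varphi\in\H'\}$ is closed under inversion by Lemma~\ref{L:PartialHom}(a), and for $w\in\D(\L)\cap\W(\H)$ we have $w^\varphi\in\D(\L')\cap\W(\H')$, so $\Pi(w)^\varphi=\Pi'(w^\varphi)\in\H'$, whence $\Pi(w)\in\H$. Applying this with $\H'=(\Y,\M)$ yields that $\Y\M=\H$ is a partial subgroup of $\L$.

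Alternatively, and perhaps more in the spirit of the ``internal'' viewpoint, I could argue directly without invoking the external product: given $w=(\Pi(y_1,m_1),\dots,(\Pi(y_k,m_k)))\in\D(\L)\cap\W(\Y\M)$ with $y_i\in\Y$, $m_i\in\M$ — noting that this representation is forced and unique by (SD2) and that $\Y\cap\N=\One$ — axiom (SD3) gives $w_\X=(y_1,\dots,y_k)\in\D(\L)$ and $w_\N=(m_1^{\,y_2\cdots y_k},\dots,m_{k-1}^{\,y_k},m_k)\in\D(\L)$. Since $\Y$ is a partial subgroup, $\Pi(w_\X)\in\Y$; since $\Y\subseteq\rN_\X(\M)$ and each partial product $y_{i+1}\cdots y_k$ lies in $\Y$ (being the product of a subword of $w_\X$, available by axiom (PG1)), each entry of $w_\N$ lies in $\M$, so $\Pi(w_\N)\in\M$ as $\M$ is a partial subgroup. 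Then $\Pi(w)=\Pi(\Pi(w_\X),\Pi(w_\N))\in\Y\M$. Closure under inversion follows from Lemma~\ref{internal.prop.par}(c): $\Pi(y,m)^{-1}=\Pi(y^{-1},(m^{-1})^{y^{-1}})$ with $y^{-1}\in\Y$ and $(m^{-1})^{y^{-1}}\in\M$ since $y^{-1}\in\rN_\X(\M)$.

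The main obstacle, such as it is, is bookkeeping: making sure that the products $\Pi(x_{i+1},\dots,x_k)$ appearing inside $w_\N$ actually land in $\Y$ so that normalization by $\Y$ applies — this uses that $\Y$ is closed under the partial product together with axiom (PG1) — and being careful that the notation $\rN_\X(\M)$ here refers to normalization under the conjugation action of $\L$ restricted to $\X$, which is consistent with the $\phi$-action by Lemma~\ref{L:InternalExternalAction}. I would present the direct argument, since it keeps the proof self-contained, and merely remark that it also follows from Lemma~\ref{subgroups}(a) via Theorem~\ref{int.ext.iso.par}.
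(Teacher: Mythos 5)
Your direct argument is essentially the paper's own proof: write each letter of $w$ as $\Pi(y_i,m_i)$ via (SD2), apply (SD3) to see that $w_\X\in\W(\Y)\cap\D(\L)$ and $w_\N\in\W(\M)\cap\D(\L)$ (using $\Y\subseteq\rN_\X(\M)$ and (PG1) for the entries of $w_\N$), and conclude $\Pi(w)=\Pi(\Pi(w_\X),\Pi(w_\N))\in\Y\M$. You are in fact slightly more careful than the paper, whose proof omits the verification that $\Y\M$ is closed under inversion — your appeal to Lemma~\ref{internal.prop.par}(c) supplies exactly that missing step — and your alternative route through Theorem~\ref{int.ext.iso.par} and Lemma~\ref{subgroups}(a) is also sound, though not the one the paper takes.
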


\begin{proof}
If $w=(f_1,\dots,f_k)\in\W(\Y\M)\cap\D(\L)$ then for all $i=1,\dots,k$, we can write $f_i=\Pi(y_i,m_i)$ for some $y_i\in\Y$ and some $m_i\in\M$. By (SD3), $w_\X=(y_1,\dots,y_k)\in\D(\L)$ and $w_\N=(m_1^{\Pi(y_2,\dots,y_k)},m_2^{\Pi(y_3,\dots,y_k)},\dots,m_{k-1}^{y_k},m_k)\in\D(\L)$. Note that $w_\X\in\W(\Y)$ and $w_\N\in\W(\M)$ as $\Y$ is a partial subgroup of $\X$ normalizing $\M$. Since $\Y$ and $\M$ are partial subgroups, it follows $\Pi(w_\X)\in\Y$ and $\Pi(w_\N)\in\M$. So by (SD3), we have $\Pi(w)=\Pi(\Pi(w_\X),\Pi(w_\N))\in\Y\M$. 
\end{proof}

We next state some calculation rules for computing in internal semidirect products of partial groups.

\begin{lemma}\label{L:Conjugate}
Let $\L$ be a partial group which is an internal semidirect product of a partial subgroup $\X$ with a partial subgroup $\N$. Fix $x,y\in\X$ and $m,n\in\N$. Assume $y\in \C_\X(\N)$. Then the following hold:
\begin{itemize}
\item [(a)] We have $ym\in\D(xn)$ if and only if $y\in \D(x)$ and $m^x\in\D(n)$. Moreover, if so, then $(ym)^{xn}=y^x(m^x)^n$. 
\item [(b)] We have $ym\in\D(nx)$ if and only if $y\in\D(x)$ and $m\in\D(n)$. Moreover, if so, then $(ym)^{nx}=y^x(m^n)^x$.
\end{itemize}
\end{lemma}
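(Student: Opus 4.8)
The strategy is to reduce everything to computations in $\L$ using the characterization of $\D(\L)$ and the product formula supplied by axiom (SD3), together with the basic partial-group identities from Lemma~\ref{L:PartialGroup} and the conjugation-calculus rules from Lemma~\ref{L:InternalExternalAction} and Lemma~\ref{internal.prop.par}(c). For part (a), I would start by unpacking $ym \in \D(xn)$, i.e. the word $s := \big((xn)^{-1},\, ym,\, xn\big) \in \D(\L)$. Writing $xn = \Pi(x,n)$ and $(xn)^{-1} = \Pi(x^{-1}, (n^{-1})^{x^{-1}})$ by Lemma~\ref{internal.prop.par}(c), and $ym = \Pi(y,m)$, one can expand $s_\X$ and $s_\N$ according to Definition~\ref{internal.par}. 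The key point is that the $\X$-coordinate $s_\X$ reduces (using that $x^{-1}xx^{-1}$-type subwords contribute identities, via Lemma~\ref{L:PartialGroup}(f),(h) and (PG4)) to essentially $(x^{-1}, y, x)$, so $s_\X \in \D(\L)$ iff $y \in \D(x)$; meanwhile $s_\N$, after pushing all the $\X$-conjugations through (here one uses $y \in \C_\X(\N)$ so that conjugation by $y$ acts trivially on $\N$, collapsing several terms), reduces to a word equivalent to $(n^{-1}, m^x, n)$ up to identities, so $s_\N \in \D(\L)$ iff $m^x \in \D(n)$. By (SD3), $s \in \D(\L)$ iff both hold, which gives the equivalence; and then $\Pi(s) = \Pi(\Pi(s_\X), \Pi(s_\N)) = \Pi(y^x, (m^x)^n)$, which by Lemma~\ref{internal.prop.par} and Lemma~\ref{L:InternalExternalAction} equals $y^x(m^x)^n$ once one checks $y^x \in \X$ and $(m^x)^n \in \N$ (the former because $\X \subseteq \rN_\L(\N)$ forces $\X$ to be normalized appropriately — actually one needs $\X^x \subseteq \X$; I would instead observe $y^x \in \C_\X(\N)$ directly or argue via the conjugation homomorphism that $y^x$ lands in $\X$).

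For part (b), the approach is analogous but now one conjugates $ym = \Pi(y,m)$ by $nx = \Pi(n,x)$. Here I would first rewrite $nx = \Pi(x, n^x)$ using Lemma~\ref{internal.prop.par}(c), so that $nx = \Pi(x, \tilde n)$ with $\tilde n := n^x \in \N$; this puts the conjugating element back in "internal semidirect product normal form" and lets me reuse the bookkeeping from part (a). Then $ym \in \D(nx)$ becomes $ym \in \D(\Pi(x,\tilde n))$, and by part (a) this holds iff $y \in \D(x)$ and $m^x \in \D(\tilde n) = \D(n^x)$. The remaining task is to show $m^x \in \D(n^x)$ is equivalent to $m \in \D(n)$: this should follow because $c_x \colon \N \to \N$ is an automorphism of the partial group $\N$ (Lemma~\ref{L:InternalExternalAction}), and automorphisms of partial groups preserve the domain of conjugation — concretely, $(n^{-1}, m, n) \in \D(\N)$ iff its image $((n^x)^{-1}, m^x, n^x) \in \D(\N)$ under $c_x$. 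For the formula, part (a) gives $(ym)^{nx} = (ym)^{\Pi(x,\tilde n)} = y^x (m^x)^{\tilde n} = y^x (m^x)^{n^x}$, and then $(m^x)^{n^x} = m^{xnx^{-1}x}$—better: using $(m^x)^{n^x} = ((m^n)^x)$ since $c_x$ is a homomorphism (so $c_n c_x = c_x c_{n^x}$ rearranges to $c_x^{-1} c_n c_x = c_{n^x}$, giving $(m^x)^{n^x} = (m^n)^x$). Hence $(ym)^{nx} = y^x(m^n)^x$, as claimed.

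The main obstacle I anticipate is the careful reduction of the words $s_\X$ and $s_\N$ to their "essential" forms: these are long concatenations involving inverses of products $\Pi(x,n)$, and one must justify each collapse (cancelling $v \circ v^{-1}$ blocks, inserting/removing $\One$'s, pushing $\X$-conjugations through using that $y$ centralizes $\N$) by citing the correct part of Lemma~\ref{L:PartialGroup} — particularly parts (d), (f), (g), (h) — rather than hand-waving "by (PG3) several times." A secondary subtlety is verifying that intermediate elements land in the correct partial subgroup ($y^x \in \X$, etc.), since a priori $\X$ need not be closed under conjugation by arbitrary elements of $\L$; here the hypothesis $y \in \C_\X(\N)$ is essential, and I would want to check whether it actually forces $y^x \in \X$ (it does: conjugation by $x$ preserves $\X$ because, by (SD2), $y^x = \Pi(x',n')$ uniquely, and one can show $n' = \One$ using that $y^x$ centralizes $\N$ together with Lemma~\ref{internal.prop.par}(d)). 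I would isolate this as a preliminary observation before doing the two main computations.
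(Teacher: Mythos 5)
Your proposal is correct and follows essentially the same route as the paper: write the conjugating element in normal form ($xn$, resp.\ $nx=xn^x$ via Lemma~\ref{internal.prop.par}(c)), expand the word $\bigl(f^{-1},ym,f\bigr)$ into its $\X$- and $\N$-coordinates, use (SD3) for the equivalence and the product formula, collapse the $\N$-coordinate to $(n^{-1},m^x,n)$ (resp.\ $(n^{-1},m,n)^{c_x}$) using that $y^x\in\C_\X(\N)$ by Corollary~\ref{C:CXNnormalX}, and invoke that $c_x$ is an automorphism of $\N$ from Lemma~\ref{L:InternalExternalAction}. The only cosmetic difference is that you deduce (b) by applying (a) to $x\cdot n^x$ rather than redoing the coordinate computation, which amounts to the same argument.
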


\begin{proof}
Throughout this proof, we will use without further reference that, by Lemma~\ref{L:InternalExternalAction}, $(a^{x_1})^{x_2}=a^{x_1x_2}$ for all $a\in\N$ and $x_1,x_2\in\X$. Set $g:=ym$.

\smallskip

For the proof of (a) set $f=xn$. Note that, by Lemma~\ref{internal.prop.par}(c), we have $f^{-1}=\Pi(x^{-1},(n^{-1})^{x^{-1}})$. Set $u:=(f^{-1},g,f)$. By (SD3), we have $u\in\D(\L)$ if and only if $u_\X=(x^{-1},y,x)\in\D(\L)$ and  $u_\N=((n^{-1})^{x^{-1}yx},m^x,n)=((n^{-1})^{y^x},m^x,n)\in\D(\L)$; moreover, if so, then $g^f=\Pi(u)=\Pi(\Pi(u_\X),\Pi(u_\N))$. As $y\in \C_\X(\N)$, it follows from Corollary~\ref{C:CXNnormalX} that $y^x\in \C_\X(\N)$ and hence (if $u_\X\in\D(\L)$ and thus $u_\N$ is well-defined), we have $u_\N=(n^{-1},m^x,n)$. Thus, $g\in\D(f)$ if and only if $y\in\D(x)$ and $m^x\in\D(n)$. Moreover, if this is the case, then $g^f=\Pi(u)=\Pi(\Pi(u_\X),\Pi(u_\N))=\Pi(y^x,(m^x)^n)$. This shows (a).

\smallskip

For the proof of (b) set $h=nx$. Then by Lemma~\ref{internal.prop.par}(c), $h=xn^x$. Moreover, by Lemma~\ref{L:PartialGroup}(g), we have $h^{-1}=x^{-1}n^{-1}$. Set $v:=(h^{-1},g,h)$. By (SD3), we have $v\in\D(\L)$ if and only if $v_\X=(x^{-1},y,x)\in\D(\L)$ and  $u_\N=((n^{-1})^{yx},m^x,n^x)\in\D(\L)$; moreover, if so, then $g^h=\Pi(u)=\Pi(\Pi(u_\X),\Pi(u_\N))$. 

\smallskip

Suppose for a moment that $v_\X\in\D(\L)$ so that $v_\N$ is defined. Then we have $v_\N=((n^{-1})^x,m^x,n^x)$ as $y\in \C_\X(\N)$. Moreover, as $c_x$ is an automorphism of $\N$ by Lemma~\ref{L:InternalExternalAction}, we have $v_\N\in\D(\L)$ if and only if $(n^{-1},m,n)\in\D(\L)$; if so, then $\Pi(v_\N)=\Pi((n^{-1},m,n)c_x^*)=\Pi(n^{-1},m,n)c_x=(m^n)^x$.   

\smallskip

Putting everything together, we have $v\in\D(\L)$ if and only if $v_\X=(x^{-1},y,x)\in\D(\L)$ and $(n^{-1},m,n)\in\D(\L)$. Moreover, if so then $g^h=\Pi(v)=\Pi(\Pi(v_\X),\Pi(v_\N))=\Pi(y^x,(m^n)^x)$. This implies (b).
\end{proof}

\begin{cor}\label{C:CXNormalL}
Let $\L$ be a partial group which is the internal semidirect product of a partial subgroup $\X$ with a partial subgroup $\N$. If $\M\subseteq\N$ is a partial normal subgroup of $\L$, then $\C_\X(\N)\M$ is a partial normal subgroup of $\L$. In particular, $\C_\X(\N)$ and $\C_\X(\N)\N$ are partial normal subgroups of $\L$.
\end{cor}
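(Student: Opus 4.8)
The plan is to obtain the partial-subgroup property of $\C_\X(\N)\M$ directly from Lemma~\ref{L:subgroupsInternal}, and then to verify partial normality by computing $a^g$ explicitly via the conjugation formula in Lemma~\ref{L:Conjugate}(a), tracking which factor lands in $\C_\X(\N)$ and which in $\M$.

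First I would assemble the ingredients. By Corollary~\ref{C:CXNnormalX}, $\C_\X(\N)$ is a partial normal subgroup of $\X$, hence in particular a partial subgroup of $\X$. Any $y\in\C_\X(\N)$ satisfies $\N\subseteq\D(y)$ and fixes $\N$ pointwise, so it fixes $\M\subseteq\N$ pointwise; thus $\C_\X(\N)\subseteq\rN_\X(\M)$. Also $\M$, being a partial normal subgroup of $\L$ contained in $\N$, is a partial subgroup of $\N$. So Lemma~\ref{L:subgroupsInternal}, applied with $\Y=\C_\X(\N)$, shows that $\C_\X(\N)\M$ is a partial subgroup of $\L$.

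Next I would prove normality. Let $g\in\L$ and $a\in\C_\X(\N)\M$ with $a\in\D(g)$. Write $a=\Pi(y,m)$ with $y\in\C_\X(\N)$ and $m\in\M$; since $\C_\X(\N)\subseteq\X$ and $\M\subseteq\N$, the uniqueness in (SD2) shows this is the (SD2)-decomposition of $a$, so in particular $y$ genuinely lies in $\C_\X(\N)$. Write $g=\Pi(x,n)$ with $x\in\X$, $n\in\N$. Now apply Lemma~\ref{L:Conjugate}(a) to $x,y,m,n$: since $ym=a\in\D(g)=\D(xn)$, the lemma yields $a^g=(ym)^{xn}=y^x(m^x)^n=\Pi\bigl(y^x,(m^x)^n\bigr)$, and it also gives $y\in\D(x)$ and $m^x\in\D(n)$. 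It remains to check that $y^x\in\C_\X(\N)$ and $(m^x)^n\in\M$. The former holds since $\C_\X(\N)\norm\X$ (Corollary~\ref{C:CXNnormalX}) and $y\in\C_\X(\N)\cap\D(x)$. For the latter: $\X\subseteq\rN_\L(\N)$ gives $m\in\N\subseteq\D(x)$, so $m^x$ is defined and lies in $\M$ because $\M\norm\L$; then $m^x\in\M\cap\D(n)$ and $\M\norm\L$ again gives $(m^x)^n\in\M$. Hence $a^g\in\C_\X(\N)\M$, so $\C_\X(\N)\M\norm\L$.

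Finally, for the ``in particular'' part I would specialize $\M$. The trivial partial subgroup $\One$ is a partial normal subgroup of $\L$ (for $g\in\L$ we have $\One^g=\Pi(g^{-1},\One,g)=\One$ by Lemma~\ref{L:PartialGroup}(f)) and $\C_\X(\N)\One=\C_\X(\N)$, so taking $\M=\One$ gives $\C_\X(\N)\norm\L$; taking $\M=\N$, which is a partial normal subgroup of $\L$ by Lemma~\ref{internal.prop.par}(d), gives $\C_\X(\N)\N\norm\L$. I do not expect a genuine obstacle: the content is bookkeeping, and the only point needing care is matching the abstract factorization $a=\Pi(y,m)$ with $y\in\C_\X(\N)$, $m\in\M$ against the unique (SD2)-decomposition so that Lemma~\ref{L:Conjugate}(a) (whose hypothesis demands $y\in\C_\X(\N)$) really applies, together with invoking $\M\norm\L$ twice in the last step.
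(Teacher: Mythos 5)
Your proposal is correct and follows essentially the same route as the paper's proof: both obtain the partial-subgroup property from Corollary~\ref{C:CXNnormalX} together with Lemma~\ref{L:subgroupsInternal}, verify normality via the conjugation formula $(ym)^{xn}=y^x(m^x)^n$ from Lemma~\ref{L:Conjugate}(a) combined with $\C_\X(\N)\norm\X$ and $\M\norm\L$, and then specialize to $\M=\One$ and $\M=\N$. Your extra remarks on matching the factorization with the unique (SD2)-decomposition and on applying $\M\norm\L$ twice are just slightly more explicit versions of steps the paper leaves implicit.
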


\begin{proof}
By Corollary~\ref{C:CXNnormalX}, $\C_\X(\N)$ is a partial normal subgroup of $\X$. In particular, $\C_\X(\N)$ is a partial subgroup with $\C_\X(\N)\subseteq \rN_\X(\M)$. So by Lemma~\ref{L:subgroupsInternal}, $\C_\X(\N)\M$ is a partial subgroup. If $x\in\X$, $y\in \C_\X(\N)$, $n\in\N$ and $m\in\M$ with $ym\in\D(xn)$, then by Lemma~\ref{L:Conjugate}(a), $y\in \D(x)$, $m^x\in\D(n)$ and $(ym)^{xn}=y^x(m^x)^n$. As $y\in\C_\X(\N)\unlhd\X$, we have $y^x\in\C_\X(\N)$. Since $\M$ is a partial normal subgroup of $\L$, we have $(m^x)^n\in\M$. Hence, $(ym)^{xn}\in\C_\X(\N)\M$. This proves that $\C_\X(\N)\M$ is a partial normal subgroup. Applying this property for $\M=\{\One\}$ and $\M=\N$ (and using Lemma~\ref{internal.prop.par}(d) in the latter case), we conclude that $\C_\X(\N)$ and $\C_\X(\N)\N$ are partial normal subgroups of $\L$. 
\end{proof}

\begin{cor}\label{C:Conjnx}
Suppose $\L$ is the internal semidirect product of a partial subgroup $\X$ with a partial subgroup $\N$. Let $n,m\in\N$ and $x\in\X$. 
\begin{itemize}
 \item [(a)] We have $m\in\D(xn)$ if and only if $m^x\in\D(n)$. If so, then $(m^x)^n=m^{xn}$.
 \item [(b)] We have $m\in\D(n)$ if and only if $m\in\D(nx)$. If so, then $(m^n)^x=m^{nx}$.
 \item [(c)] We have $m^{x^{-1}}\in\D(n)$ if and only if $m\in\D(n^x)$. If so, then $m^{n^x}=((m^{x^{-1}})^n)^x$.
\end{itemize}
\end{cor}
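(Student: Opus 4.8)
The plan is to obtain (a) and (b) as the special case $y=\One$ of Lemma~\ref{L:Conjugate}, and then to derive (c) from (a), (b) and the commutation rule $\Pi(n,x)=\Pi(x,n^x)$ of Lemma~\ref{internal.prop.par}(c). There is no real difficulty here; the point is bookkeeping.

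First I would collect the trivial facts that $\One\in\X$, that $\One\in\C_\X(\N)$ (since $c_\One$ is the identity map on $\N$), that $\One\in\D(x)$ with $\One^x=\One$ for every $x\in\X$ (by Lemma~\ref{L:PartialGroup}(f)), and that $\Pi(\One,a)=a$ for any element $a$ with $(\One,a)\in\D(\L)$ (by Lemma~\ref{L:PartialGroup}(d) and (PG2); note $(\One,m)\in\D(\L)$ by Lemma~\ref{internal.prop.par}(a)). With these in hand, applying Lemma~\ref{L:Conjugate}(a) with $y=\One$ turns ``$ym\in\D(xn)$'' into ``$m\in\D(xn)$'', turns ``$y\in\D(x)$ and $m^x\in\D(n)$'' into just ``$m^x\in\D(n)$'' (the condition $\One\in\D(x)$ being automatic), and turns the formula $(ym)^{xn}=y^x(m^x)^n$ into $m^{xn}=(m^x)^n$; this is exactly (a). Part (b) comes out of Lemma~\ref{L:Conjugate}(b) in the same way, since $\One^x=\One$ again kills the leading factor $y^x$.

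For (c) the element $n^x$ lies in $\N$, so $\D(n^x)$ refers to conjugation by an element of $\N$, and the task is to compare it with $\D(n)$. I would use $\Pi(n,x)=\Pi(x,n^x)$ from Lemma~\ref{internal.prop.par}(c): since $nx$ and $xn^x$ are the same element of $\L$, we have $\D(nx)=\D(xn^x)$ and $a^{nx}=a^{xn^x}$ for every $a$ in this common domain. Writing $m':=m^{x^{-1}}$ and chaining part (b) applied to $m'$ with part (a) applied with $n^x$ in the role of $n$ and $m'$ in the role of $m$, I get
\[
m^{x^{-1}}=m'\in\D(n)\iff m'\in\D(nx)=\D(xn^x)\iff (m')^x\in\D(n^x).
\]
By Lemma~\ref{L:InternalExternalAction}, $(m')^x=(m^{x^{-1}})^x=m^{\Pi(x^{-1},x)}=m^{\One}=m$, so the right-hand condition is $m\in\D(n^x)$, which gives the equivalence in (c); and when these conditions hold,
\[
((m^{x^{-1}})^n)^x=(m')^{nx}=(m')^{xn^x}=((m')^x)^{n^x}=m^{n^x},
\]
which is the claimed identity.

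The only point needing a little care — and the place where one genuinely uses that $\L$ is an internal semidirect product rather than an arbitrary partial group — is the rewriting $(m^{x^{-1}})^x=m$, which relies on the composition rule $(a^{x_1})^{x_2}=a^{x_1x_2}$ established in Lemma~\ref{L:InternalExternalAction}; beyond that one just has to check that the domain conditions really do propagate along the two equivalence chains above, which is immediate once (a) and (b) are in place. No direct manipulation with the partial-group axioms (PG1)--(PG4) is required.
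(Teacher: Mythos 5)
Your proof is correct and follows essentially the same route as the paper: parts (a) and (b) are obtained exactly as in the paper's proof by specializing Lemma~\ref{L:Conjugate}(a),(b) to $y=\One\in\C_\X(\N)$. For part (c) you run the same chain of (a), (b) and the composition rule of Lemma~\ref{L:InternalExternalAction}, only through the commutation identity in the form $nx=xn^x$ applied to $m^{x^{-1}}$ rather than in the equivalent form $n^xx^{-1}=x^{-1}n$ applied to $m$ as the paper does, which is an immaterial difference.
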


\begin{proof}
Note that $\One\in\D(x)$ with $\One^x=\One$. So properties (a) and (b) follow from Lemma~\ref{L:Conjugate}(a),(b) applied with $y=\One\in \C_\X(\N)$.

\smallskip

As $\X\subseteq N_\L(\N)$, Lemma~\ref{L:PartialGroup}(b) gives $(x^{-1},n,x,x^{-1})\in\D(\L)$. Thus, by (PG3), $n^xx^{-1}=\Pi(x^{-1},n,x,x^{-1})=x^{-1}n$. By (a), we have $m^{x^{-1}}\in\D(n)$ if and only if $m\in\D(x^{-1}n)=\D(n^xx^{-1})$. By (b), this is the case if and only if $m\in\D(n^x)$. Moreover, if these equivalent conditions hold, we have $(m^{n^x})^{x^{-1}}\overset{(b)}{=}m^{n^xx^{-1}}=m^{x^{-1}n}\overset{(a)}{=}(m^{x^{-1}})^n$. Conjugating this equation with $x$ and using Lemma~\ref{L:InternalExternalAction}, one obtains $m^{n^x}=((m^{x^{-1}})^n)^x$. This proves (c).
\end{proof}

In the next section, we will prove that, under certain sufficient conditions, we can construct a locality structure on partial groups which are internal or external semidirect products. The following lemma is a crucial preliminary step.

\begin{lemma}\label{L:SylowProduct}
Let $\L$ be the internal semidirect product of a partial subgroup $\X$ with a partial subgroup $\N$. Suppose $S_\X$ is a subgroup of $\X$ and $T$ is a subgroup of $\N$ such that $S_\X\subseteq N_\L(T)$.
\begin{itemize}
 \item [(a)] $S_\X T:=\{\Pi(s,t)\colon s\in S_\X\mbox{ and }t\in T\}$ is a subgroup of $\L$ which (as a binary group) is the semidirect product of $S_\X$ with $T$ in the usual group theoretical sense.
 \item [(b)] If $S_\X$ and $T$ are $p$-subgroups, then $S_\X T$ is a $p$-subgroup.
 \item [(c)] If $S_\X$ is a maximal $p$-subgroup of $\X$ and $T$ is a maximal $p$-subgroup of $\N$, then $S_\X T$ is a maximal $p$-subgroup of $\L$.
\end{itemize}
\end{lemma}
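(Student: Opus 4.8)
The plan is to apply Lemma~\ref{L:subgroupsInternal} together with Lemma~\ref{L:Conjugate} (or rather its specialization) to get that $S_\X T$ is a partial subgroup, then use the fact that a partial subgroup $\H$ of $\L$ with $\W(\H)\subseteq\D(\L)$ is a subgroup, hence a group; once we know it is a group, parts (b) and (c) become essentially group-theoretic bookkeeping. First I would observe that $S_\X$ is a subgroup of $\X$ and $T$ is a subgroup of $\N$, and the hypothesis $S_\X\subseteq N_\L(T)$ says precisely that $S_\X$ is a partial subgroup of $\X$ normalizing the partial subgroup $T$ of $\N$ in the sense of Lemma~\ref{L:subgroupsInternal}. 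That lemma then gives that $S_\X T$ is a partial subgroup of $\L$. To upgrade it to a subgroup, I need $\W(S_\X T)\subseteq\D(\L)$; for this take any word $w=(f_1,\dots,f_k)$ with $f_i=\Pi(s_i,t_i)$, $s_i\in S_\X$, $t_i\in T$, and verify via (SD3) that $w_\X=(s_1,\dots,s_k)\in\D(\L)$ (true since $S_\X$ is a subgroup of $\X$, so $\W(S_\X)\subseteq\D(\X)\subseteq\D(\L)$) and that $w_\N=(t_1^{\Pi(s_2,\dots,s_k)},\dots,t_{k-1}^{s_k},t_k)\in\D(\L)$ (each entry lies in $T$ since $S_\X$ normalizes $T$, and $\W(T)\subseteq\D(\N)\subseteq\D(\L)$ as $T$ is a subgroup of $\N$). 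Hence $w\in\D(\L)$, so $S_\X T$ is a subgroup of $\L$ and therefore a group with multiplication $\Pi$. The identification of this group as the group-theoretic semidirect product of $S_\X$ with $T$ then follows from Lemma~\ref{internal.prop.par}(a),(b),(c) applied inside the group $S_\X T$: every element is uniquely $\Pi(s,t)$, $T\cap S_\X=\One$, $T$ is normalized by $S_\X$, and the conjugation formula matches; this establishes (a).

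For (b), once $S_\X T$ is a finite group, $|S_\X T|=|S_\X|\cdot|T|$ by uniqueness of the decomposition $\Pi(s,t)$ (a bijection $S_\X\times T\to S_\X T$), so if both $S_\X$ and $T$ are $p$-groups then $S_\X T$ has order a power of $p$ and is a $p$-subgroup of $\L$. For (c), suppose $S_\X$ is a maximal $p$-subgroup of $\X$ and $T$ a maximal $p$-subgroup of $\N$, and let $U$ be a $p$-subgroup of $\L$ containing $S_\X T$. The idea is to push $U$ down to $\X$ and to $\N$: intersecting with $\X$ and with $\N$. More precisely, $U\cap\N$ is a $p$-subgroup of $\N$ containing $T$, so by maximality $U\cap\N=T$; and using the decomposition $\L=\X\N$ one checks that $U=(U\cap\X)(U\cap\N)$ — every $u\in U$ writes as $\Pi(x,n)$ with $x\in\X,n\in\N$, and since $n\in\D(U)$-type manipulations (via Lemma~\ref{internal.prop.par}(a) and $U$ being a group) show $n\in U\cap\N$ and $x\in U\cap\X$ — whence $U\cap\X$ is a $p$-subgroup of $\X$ containing $S_\X$, so $U\cap\X=S_\X$ by maximality, giving $U=S_\X T$.

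The main obstacle I expect is (c), specifically the claim that any $p$-subgroup $U$ of $\L$ containing $S_\X T$ decomposes as $(U\cap\X)(U\cap\N)$, and in particular that $n=(x^{-1}\cdot(xn))\in U$ when $xn\in U$ — this requires knowing $x^{-1}\in U$, i.e.\ that $U\cap\X$ is already a subgroup containing enough elements, which is slightly circular-looking and must be handled carefully. The clean way is: for $u=\Pi(x,n)\in U$, the component $n$ is recovered as $n=\Pi(x^{-1},u)$ where $x=$ the $\X$-part; but a priori we do not know $x\in U$. Instead I would argue via $U\cap\N\norm U$: since $\N\norm\L$ by Lemma~\ref{internal.prop.par}(d), we get $U\cap\N\norm U$, and $U/(U\cap\N)$ embeds into $\L/\N\cong\X$ (as partial groups, using a quotient/projection map $\L\to\X$ induced by $\Pi(x,n)\mapsto x$, which one should check is a homomorphism of partial groups), so $|U|=|U\cap\N|\cdot|\text{image in }\X|\le|T|\cdot|S_\X|=|S_\X T|$, forcing $U=S_\X T$. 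Verifying that $\Pi(x,n)\mapsto x$ is a well-defined homomorphism of partial groups (it is essentially the projection, well-defined by (SD2), and compatible with products by (SD3) combined with Lemma~\ref{internal.prop.par}) is the technical heart; alternatively one can avoid quotients entirely and run a direct counting argument using that the map $U\to\X$, $\Pi(x,n)\mapsto x$, restricted to $U$ has fibers that are cosets of $U\cap\N=T$ inside $U$. Either route closes the proof.
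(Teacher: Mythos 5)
Your proposal is correct and follows essentially the same route as the paper: part (a) by checking $w_\X\in\W(S_\X)$ and $w_\N\in\W(T)$ and invoking (SD3), and part (c) via the projection $\Pi(x,n)\mapsto x$ restricted to an overgroup $P\supseteq S_\X T$, which the paper likewise shows is a surjective group homomorphism onto a $p$-subgroup of $\X$ containing $S_\X$, finishing with maximality of $S_\X$ and $T$ and a Dedekind/counting step. Your second, non-circular route for (c) is exactly the paper's argument, so the hesitation you flag is resolved the same way the authors resolve it.
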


\begin{proof}
Let $w\in\W(S_\X T)$. Then every entry of $w$ is of the form $\Pi(s,t)$ with $s\in S_\X$ and $t\in T$. Hence, $w_\X\in\W(S)\subseteq\D(\L)$ with $\Pi(w_\X)\in S_\X$, as $S_\X$ is a subgroup of $\X$. In particular, $w_\N$ is defined. As $S_\X\subseteq N_\L(T)$, one sees easily that $w_\N\in\W(T)$. So as $T$ is a subgroup of $\N$, we have $w_\N\in\D(\L)$ and $\Pi(w_\N)\in T$. Hence, by (SD3), $w\in\D(\L)$ and $\Pi(w)=\Pi(\Pi(w_\X),\Pi(w_\N))\in S_\X T$. This proves that $S_\X T$ is a subgroup of $\L$. As $S_\X$ and $T$ are clearly both contained in $S_\X T$, it follows now that $S_\X T$ is also the product of its subgroups $S_\X$ and $T$ in the usual group theoretical sense. Furthermore, as $S_\X\subseteq N_\L(T)$, we have that $T$ is normal in $S_\X T$. By Lemma~\ref{internal.prop.par}(b), we have $S_\X\cap T\subseteq \X\cap \N=\{\One\}$. This shows (a), and property (b) follows directly from (a).

\smallskip

For the proof of (c) assume now that $S_\X$ is a maximal $p$-subgroup of $\X$ and $T$ is a maximal $p$-subgroup of $\N$. Let furthermore $S_\X T\subseteq P$ for some $p$-subgroup $P$ of $\L$. By (a), it is enough to show that $S_\X T=P$. Set
\[P_\X:=\{x\in\X\colon \exists n\in\N\mbox{ such that }(x,n)\in\D\mbox{ and }\Pi(x,n)\in P\}.\]

\smallskip

\noindent\emph{Step~1:} We show that $P_\X$ is a subgroup of $\X$. Let $u=(x_1,\dots,x_k)\in\W(P_\X)$. Then by definition of $P_\X$, there exist $n_1,\dots,n_k\in\N$ with $\Pi(x_i,n_i)\in P$ for $i=1,\dots,k$. Hence, $w:=(\Pi(x_1,n_1),\dots,\Pi(x_k,n_k))\in\W(P)\subseteq\D(\L)$ with $\Pi(w)\in P$, as $P$ is a subgroup of $\L$. Thus, by (SD3), $u=w_\X\in\D(\L)$, $w_\N\in\D(\L)$ and $\Pi(\Pi(u),\Pi(w_\N))=\Pi(\Pi(w_\X),\Pi(w_\N))=\Pi(w)\in P$ with $\Pi(w_\N)\in\N$. So by definition of $P_\X$, we have $\Pi(u)\in P_\X$. 

\smallskip

\noindent\emph{Step~2:} We show that $P_\X=S_\X$. To see this note first that $S_\X\subseteq P_\X$, since for every $s\in S_\X$, we have $(s,\One)\in\D(\L)$ and $\Pi(s,\One)\in S_\X T\subseteq P$ with $\One\in\N$. Hence, as $S_\X$ is a maximal $p$-subgroup of $\X$, it is enough to show that $P_\X$ is a $p$-subgroup of $\X$. By Step~1, $P_\X$ is a subgroup of $\X$. Define now 
\[\phi\colon P\rightarrow P_\X,\Pi(x,n)\mapsto x\mbox{ for all }x\in\X,\;n\in\N\mbox{ with }\Pi(x,n)\in P.\]
Notice that $\phi$ is well-defined by (SD2) and surjective by definition of $P_\X$. We show now that $\phi$ is a homomorphism of groups. For that let $f_1,f_2\in P$ and write $f_i=\Pi(x_i,n_i)$ with $x_i\in\X$ and $n_i\in\N$ for $i=1,2$. As $P$ is a subgroup, $v:=(f_1,f_2)\in\D(\L)$. So by (SD3), $v_\X=(x_1,x_2)=\D(\L)$, $v_\N$ is well-defined and an element of $\W(\N)\cap\D(\L)$, and $f_1f_2=\Pi(v)=\Pi(x_1x_2,\Pi(v_\N))$. Hence, $\phi(f_1f_2)=x_1x_2=\phi(f_1)\phi(f_2)$. So $\phi$ is a surjective group homomorphism. As $P$ is a $p$-group, it follows that $P_\X$ is a $p$-group as well. As argued above, this yields that $S_\X=P_\X$.

\smallskip

\noindent\emph{Step~3:} We complete the proof. Observe that by (SD2) and definition of $P_\X$, we have $P\subseteq P_\X\N$. So using Step~2, we conclude $S_\X\subseteq S_\X T\subseteq P\subseteq S_\X\N$. Hence, by the Dedekind Lemma \cite[2.1]{Henke:2015a}, we have $P=S_\X(P\cap\N)$. Notice that $T\subseteq P\cap\N$. Moreover, $P\cap\N$ is a subgroup of the $p$-group $P$, and thus a $p$-subgroup of $\N$. As $T$ is a maximal $p$-subgroup of $\N$, it follows thus that $T=P\cap\N$. Hence, $P=S_\X T$ as required. 
\end{proof}

\section{Semidirect products of localities}\label{S:SemidirectProdLoc}

In the next subsection, we will show that, under certain sufficient conditions which are made precise in Hypothesis~\ref{H:ProdLocInternal} below, we can endow a partial group, which is an internal semidirect product of two partial subgroups, with a locality structure. This will motivate definitions of internal and external semidirect products of localities which we give in Subsection~\ref{Ss:SemidirectLocalitiesDef}. Moreover, given that an external semidirect product of two partial groups is by Theorem~\ref{external.internal.par} also an internal semidirect product of partial subgroups, the results we prove in Subsection~\ref{Ss:ConstructLocalities} will imply that external semidirect products of localities (as we will define them) form indeed localities.

\subsection{Constructing localities}\label{Ss:ConstructLocalities}

\begin{hypo}\label{H:ProdLocInternal}
Let $\L$ be a partial group which is the internal semidirect product of a partial subgroup $\X$ with a partial subgroup $\N$. Assume that for appropriate $S_\X$, $T$, $\Delta_\X$ and $\Gamma$, the triples $(\X,\Delta_\X,S_\X)$ and $(\N,\Gamma,T)$ form localities and $\X$ leaves $\Gamma$ invariant, i.e. $R^x\in\Gamma$ for every $R\in\Gamma$ and $x\in\X$. Suppose furthermore
\begin{equation}\tag{$*$}\label{star}
 Q\cap \C_\X(\N)\in\Delta_\X\mbox{ for all }Q\in\Delta_\X.
\end{equation}
Set $S:=S_\X T$ and notice that $S$ is a $p$-subgroup of $\L$ by Lemma~\ref{L:SylowProduct}. For $P\leq S$ define
\[P_\X:=\{s\in \C_{S_\X}(\N)\colon \exists t\in T\mbox{ such that }st\in P\}\]
and 
\[P_\N:=\{t\in T\colon \exists s\in \C_{S_\X}(\N)\mbox{ such that }st\in P\}.\]
Define the following sets of subgroups of $S$:
\begin{itemize}
 \item Write $\Delta_0$ for the set of all subgroups of $S$ of the form $QR$ where $Q\in\Delta_\X$, $R\in\Gamma$ and $Q\subseteq \C_\X(\N)$.
 \item Write $\Delta$ for the set of all subgroups $P$ of $S$ which contain an element of $\Delta_0$.
 \item Write $\Delta^+$ for the set of all subgroups $P$ of $S$ such that $P_\X\in\Delta_\X$ and $P_\N\in\Gamma$.
\end{itemize}
\end{hypo}

\begin{lemma}\label{L:RemarkProdLocInternal}
Assume Hypothesis \ref{H:ProdLocInternal} and let $P\leq S$. Then the following hold:
\begin{itemize}
\item[(a)] $P \cap \C_{S_\X}(\N) \leq P_\X$ and $(P\cap \X)_\X = P \cap \C_{S_\X}(\N)$.
\item[(b)] $P \cap \N \leq P_\N$ and $(P\cap \N)_\N = P \cap \N$.
\item[(c)] $S_0:=\C_{S_\X}(\N)T=(\C_\X(\N)\N)\cap S$ is strongly closed in $\F_S(\L)$. Moreover, $(P\cap S_0)_\X=P_\X$ and $(P\cap S_0)_\N=P_\N$. In particular, $P\in\Delta^+$ if and only if $P\cap S_0\in\Delta^+$. 
\end{itemize}
\end{lemma}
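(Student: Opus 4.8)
The plan is to unwind the definitions of $P_\X$, $P_\N$, and the relevant intersections, using the fact that $\L$ is the internal semidirect product of $\X$ with $\N$ so that every element has a unique expression $\Pi(x,n)$ with $x\in\X$, $n\in\N$ (Axiom (SD2), Lemma~\ref{internal.prop.par}(b)). I would treat (a) and (b) first as short direct computations, then build (c) on top of them.

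For (a): if $x\in P\cap\C_{S_\X}(\N)$, then $x=\Pi(x,\One)$ with $\One\in T$, so by definition $x\in P_\X$; this gives $P\cap\C_{S_\X}(\N)\leq P_\X$. For the equality $(P\cap\X)_\X=P\cap\C_{S_\X}(\N)$, note that $P\cap\X$ is a subgroup of $S$ consisting of elements lying in $\X$, and since each $g\in P\cap\X$ is already in $\X$, its unique decomposition is $g=\Pi(g,\One)$; moreover $g\in S_\X$ because $g\in S=S_\X T$ forces $g\in S_\X$ by $\X\cap\N=\One$ and the Dedekind-type argument (or directly: the unique decomposition of an element of $S_\X T$ lying in $\X$ has trivial $\N$-part and $\X$-part in $S_\X$). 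One must also check $g\in\C_{S_\X}(\N)$: since $(P\cap\X)_\X$ by definition collects the $\C_{S_\X}(\N)$-parts $s$ of decompositions $st\in P\cap\X$, and for $g\in P\cap\X$ the part is $g$ itself, we need $g\in\C_{S_\X}(\N)$, which holds precisely because the definition of $P_\X$ only extracts parts lying in $\C_{S_\X}(\N)$ — so $(P\cap\X)_\X$ is by construction a subset of $\C_{S_\X}(\N)$, and conversely every element of $P\cap\C_{S_\X}(\N)$ is its own part. Part (b) is entirely analogous and simpler: each $n\in P\cap\N$ has decomposition $n=\Pi(\One,n)$ with $\One\in\C_{S_\X}(\N)$, giving $P\cap\N\leq P_\N$; and $(P\cap\N)_\N$ collects the $T$-parts of elements of $P\cap\N$, which are the elements themselves, so $(P\cap\N)_\N=P\cap\N$ (using that $P\cap\N\subseteq T$ since $P\subseteq S=S_\X T$).

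For (c): first, $S_0=\C_{S_\X}(\N)T$ is a subgroup of $S$ by Lemma~\ref{L:SylowProduct}(a) applied to $S_\X'=\C_{S_\X}(\N)$ (which normalizes $T$ since it lies in $S_\X\subseteq N_\L(T)$), and it equals $(\C_\X(\N)\N)\cap S$ by a Dedekind-modular-law argument (Lemma~\ref{internal.prop.par}(b) gives $\X\cap\N=\One$; intersecting $\C_\X(\N)\N$ with $S=S_\X T\leq \X\N$ and using $\C_\X(\N)\N\cap S_\X T = (\C_\X(\N)\cap S_\X)T = \C_{S_\X}(\N)T$). That $S_0$ is strongly closed in $\F_S(\L)$ follows because $\C_\X(\N)\N$ is a partial normal subgroup of $\L$ by Corollary~\ref{C:CXNormalL}, and the intersection with $S$ of a partial normal subgroup is strongly closed (a standard locality fact; alternatively one checks directly that for $f\in\L$ and $x\in S_0$ with $x^f\in S$, the normality forces $x^f\in\C_\X(\N)\N\cap S=S_0$). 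For the two identities $(P\cap S_0)_\X=P_\X$ and $(P\cap S_0)_\N=P_\N$: the inclusions $\subseteq$ are immediate since $P\cap S_0\leq P$. For $\supseteq$, if $s\in P_\X$ then there is $t\in T$ with $st\in P$; but $s\in\C_{S_\X}(\N)\subseteq S_0$ and $t\in T\subseteq S_0$, so $st\in S_0$, hence $st\in P\cap S_0$, and therefore $s\in (P\cap S_0)_\X$; symmetrically for $P_\N$. The final claim $P\in\Delta^+\iff P\cap S_0\in\Delta^+$ is then immediate from the definition of $\Delta^+$ together with the equalities $(P\cap S_0)_\X=P_\X$ and $(P\cap S_0)_\N=P_\N$.

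The main obstacle is getting the two identifications $S_0=(\C_\X(\N)\N)\cap S$ and the strong-closedness cleanly, since these require mixing the partial-group normality (Corollary~\ref{C:CXNormalL}) with the subgroup structure of $S=S_\X T$ and a Dedekind-law computation inside the ambient (not necessarily group) structure; one has to be careful to argue only through subgroups of $\L$ where ordinary group theory applies. Everything else is bookkeeping with the unique decomposition $g=\Pi(x,n)$.
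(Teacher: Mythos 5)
Your proposal is correct and follows essentially the same route as the paper, which disposes of (a) and (b) as direct checks and proves (c) by combining the uniqueness of the decomposition from (SD2), the partial normality of $\C_\X(\N)\N$ from Corollary~\ref{C:CXNormalL}, and a direct verification of $(P\cap S_0)_\X=P_\X$ and $(P\cap S_0)_\N=P_\N$; you simply supply the details the paper leaves as "easy to check".
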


\begin{proof}
Properties (a) and (b) are easy to check. For (c) note first that, by (SD2), every element of $S=S_\X T$ can be written uniquely as a product of an element of $\X$ with an element of $\N$. This implies $(\C_\X(\N)\N)\cap S=\C_{S_\X}(\N)T=S_0$. By Lemma~\ref{C:CXNormalL}, $\C_\X(\N)\N$ is normal in $\L$, and from that one sees easily that $S_0$ is strongly closed in $\F_S(\L)$. One verifies directly from the definitions of $P_\X$ and $P_\N$ that the second part of (c) holds.
\end{proof}

\begin{lemma}\label{L:Delta0Closed}
Assume Hypothesis~\ref{H:ProdLocInternal}. Let $R\in\Gamma$ and $Q\in\Delta_\X$ such that $Q\subseteq \C_\X(\N)$. Let $f=\Pi(x,n)\in\L$ with $x\in\X$ and $n\in\N$. Then $QR\subseteq S_f$ if and only if $Q\subseteq S_x$ and $R^x\subseteq S_n$. Moreover, if so, then we have $Q^x\in\Delta_\X$, $(R^x)^n\in\Gamma$, $Q^x\subseteq \C_\X(\N)$, and $(QR)^f=Q^x(R^x)^n\in\Delta_0$. In particular, $\Delta_0$ is closed under taking $\L$-conjugates in $S$.
\end{lemma}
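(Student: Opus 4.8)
The statement factors the conjugation action on a product $QR$ (with $Q \in \Delta_\X$, $Q \subseteq \C_\X(\N)$, and $R \in \Gamma$) through the two factors of the semidirect product. The plan is to first write $f = \Pi(x,n)$ and apply the calculation rules for conjugation in internal semidirect products established in Lemma~\ref{L:Conjugate} and Corollary~\ref{C:Conjnx}. The key point is that $Q \subseteq \C_\X(\N)$: for $q \in Q$ we have $q \in \C_\X(\N)$, so Lemma~\ref{L:Conjugate}(a) (applied with $y = q$, $m = \One$) tells us that $q \in \D(f)$ if and only if $q \in \D(x)$, and then $q^f = q^x$; while for $r \in R \le T \le \N$, Corollary~\ref{C:Conjnx}(a) gives $r \in \D(f) = \D(xn)$ if and only if $r^x \in \D(n)$, and then $r^f = (r^x)^n$. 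Since every element of $QR$ is of the form $\Pi(q,r)$ with $q \in Q$, $r \in R$, using (SD3) one unwinds $\Pi(q,r) \in \D(f)$ into the conditions on $q$ and $r$; collecting these over all $q \in Q$ and $r \in R$ yields exactly "$QR \subseteq S_f$ iff $Q \subseteq S_x$ and $R^x \subseteq S_n$" (here one also needs that $R^x \subseteq T$ — but since $S_\X$ normalizes $T$... wait, more carefully, $R^x \le S$ and $R^x \le \N$ forces $R^x \le S \cap \N = T$ by Lemma~\ref{L:SylowProduct}(a) / Lemma~\ref{internal.prop.par}(b), so $R^x$ makes sense as a candidate subgroup of $T$, and $R^x \subseteq S_n$ is the meaningful condition).

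Assuming now $QR \subseteq S_f$, i.e. $Q \subseteq S_x$ and $R^x \subseteq S_n$: first, since $(\X, \Delta_\X, S_\X)$ is a locality and $Q \in \Delta_\X$ with $Q \subseteq S_x$, axiom (3) of a locality gives $Q^x \in \Delta_\X$. Next, $\X$ leaves $\Gamma$ invariant by Hypothesis~\ref{H:ProdLocInternal}, so $R^x \in \Gamma$; and since $R^x \in \Gamma$ with $R^x \subseteq S_n$ and $(\N, \Gamma, T)$ is a locality, axiom (3) gives $(R^x)^n \in \Gamma$. That $Q^x \subseteq \C_\X(\N)$ follows from Corollary~\ref{C:CXNnormalX}: $\C_\X(\N) \unlhd \X$, so it is a partial normal subgroup, hence closed under $\L$-conjugation — more precisely $q^x \in \C_\X(\N)$ for each $q \in Q$ since $q \in \C_\X(\N) \cap \D(x)$. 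For the product formula $(QR)^f = Q^x (R^x)^n$: a typical element of $QR$ is $\Pi(q,r)$ and, using Lemma~\ref{L:Conjugate}(a) with $y = q \in \C_\X(\N)$, $m = r$, one gets $(qr)^f = q^x (r^x)^n$ (reading $\Pi(q,r)$ as $qr$), and this exhibits $(QR)^f$ as the set $\{\Pi(q^x, (r^x)^n) : q \in Q, r \in R\} = Q^x (R^x)^n$. Finally, $Q^x (R^x)^n \in \Delta_0$ by the definition of $\Delta_0$, since $Q^x \in \Delta_\X$, $(R^x)^n \in \Gamma$, $Q^x \subseteq \C_\X(\N)$, and $Q^x(R^x)^n$ is a subgroup of $S$ (it is $(QR)^f \le S_f^f \le S$, or directly by Lemma~\ref{L:SylowProduct}(a) applied to $\C_{S_\X}(\N)$-type subgroups).

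The "in particular" clause is then immediate: an arbitrary element of $\Delta_0$ has the form $QR$ as above by definition, and we have just shown that whenever $f \in \L$ satisfies $QR \subseteq S_f$, the conjugate $(QR)^f = Q^x(R^x)^n$ again lies in $\Delta_0$ — which is precisely the statement that $\Delta_0$ is closed under taking $\L$-conjugates in $S$.

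I expect the main obstacle to be the bookkeeping in the first step: carefully translating "$\Pi(q,r) \in \D(f)$ for all $q \in Q$, $r \in R$" into the separate conditions $Q \subseteq S_x$ and $R^x \subseteq S_n$ via (SD3) and Lemma~\ref{L:Conjugate}/Corollary~\ref{C:Conjnx}, and being careful that $S_x$, $S_n$, $S_f$ are a priori only subsets (not obviously subgroups) of $S$ in a general partial group — though here $\X$ and $\N$ carry locality structures, so within those $S_x$ and $S_n$ are subgroups, and one must make sure the factorization argument only uses elements in the relevant domains. The remaining steps are direct applications of the hypotheses (invariance of $\Gamma$ under $\X$, locality axiom (3) in each of $\X$ and $\N$, normality of $\C_\X(\N)$) and the conjugation calculus already developed.
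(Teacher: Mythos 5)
Your proposal is correct and follows essentially the same route as the paper: Lemma~\ref{L:Conjugate}(a) applied with $y\in Q\subseteq \C_\X(\N)$ and $m\in R$ to split the conjugation by $f=\Pi(x,n)$ into conjugation by $x$ on $Q$ and by $n$ on $R^x$, then the $\X$-invariance of $\Gamma$, the locality axioms in $\X$ and $\N$ (using $S\cap\X=S_\X$ and $S\cap\N=T$), and Corollary~\ref{C:CXNnormalX} for $Q^x\subseteq\C_\X(\N)$. The one wobble --- why $R^x\leq T$ so that ``$R^x\subseteq S_n$'' makes sense --- is resolved exactly as the paper does it, by invoking $R^x\in\Gamma$ from the $\X$-invariance of $\Gamma$ at the outset rather than via the shakier ``$R^x\leq S$ and $R^x\leq\N$'' remark.
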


\begin{proof}
For the proof observe first that $R^x\in\Gamma$ as $\Gamma$ is $\X$-invariant. In particular, $R^x\leq T\leq S$. Moreover, applying Lemma~\ref{L:Conjugate}(a) for all $y\in Q\subseteq \C_\X(\N)$ and all $m\in R$, one sees that $QR\subseteq \D(f)$ if and only if $Q\subseteq \D(x)$ and $R^x\subseteq\D(n)$, and if so, then $(QR)^f=Q^x(R^x)^n$. Hence, $QR\subseteq S_f$ if and only if $Q\subseteq S_x$ and $R^x\subseteq S_n$. As $S_\X$ and $T$ are maximal $p$-subgroups of $\X$ and $\N$ respectively, we have $S\cap \X=S_\X$ and $S\cap\N=T$. So if $Q\leq S_x$, then $Q\leq (S_\X)_x$, and if $R^x\leq S_n$, then $R^x\leq T_n$. Thus, as $(\X,\Delta_\X,S_\X)$ and $(\N,\Gamma,T)$ are localities, it follows in this case $Q^x\in\Delta_\X$, $(R^x)^n\in\Gamma$. Moreover, by Corollary~\ref{C:CXNnormalX}, we have $Q^x\subseteq \C_\X(\N)$. So $(QR)^f=Q^x(R^x)^n\in\Delta_0$. As every element of $\L$ is of the form $\Pi(x,n)$ for some $x\in\X$ and $n\in\N$, it follows that $\Delta_0$ is $\L$-closed in $S$. 
\end{proof}

\begin{lemma}\label{L:DDelta0}
Assume Hypothesis~\ref{H:ProdLocInternal}. Then $\D(\L)=\D(\L)_{\Delta_0}$.
\end{lemma}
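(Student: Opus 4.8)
The goal is to show $\D(\L) = \D(\L)_{\Delta_0}$. The inclusion $\D(\L)_{\Delta_0} \subseteq \D(\L)$ is immediate since $\Delta_0$ consists of subgroups of $S$ and hence of $\L$, so the content is the reverse inclusion: every $w \in \D(\L)$ lies in $\D(\L)_{\Delta_0}$, i.e.\ there is a chain of objects $P_0, \dots, P_k \in \Delta_0$ witnessing $w \in \D(\L)_{\Delta_0}$. The plan is to take $w = (\Pi(x_1,n_1), \dots, \Pi(x_k,n_k)) \in \D(\L)$, decompose it via (SD3) into $w_\X = (x_1,\dots,x_k) \in \D(\X)$ and $w_\N = (m_1,\dots,m_k) \in \D(\N)$ (where $m_i = n_i^{\Pi(x_{i+1},\dots,x_k)}$), and then use the locality structures on $\X$ and $\N$ separately to produce object chains, which I will then combine into a chain in $\Delta_0$.

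Concretely, first I would apply $(\X,\Delta_\X,S_\X)$: since $w_\X \in \D(\X) = \D(\X)_{\Delta_\X}$, there exist $Q_0, \dots, Q_k \in \Delta_\X$ with $Q_{i-1} \subseteq \D(x_i)$ (indeed $Q_{i-1} \subseteq (S_\X)_{x_i}$) and $Q_{i-1}^{x_i} = Q_i$. However these $Q_i$ need not lie in $\C_\X(\N)$, which is what membership in $\Delta_0$ requires; this is where hypothesis~(\ref{star}) enters. I would replace $Q_i$ by $Q_i \cap \C_\X(\N)$. By (\ref{star}), $Q_i \cap \C_\X(\N) \in \Delta_\X$, and by Corollary~\ref{C:CXNnormalX} the subgroup $\C_\X(\N)$ is normal in $\X$, so conjugation by $x_i$ carries $Q_{i-1} \cap \C_\X(\N)$ to $Q_i \cap \C_\X(\N)$; thus after this replacement I may assume $Q_i \subseteq \C_\X(\N)$ for all $i$. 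Next I would apply $(\N,\Gamma,T)$: since $w_\N \in \D(\N) = \D(\N)_\Gamma$, there exist $R_0, \dots, R_k \in \Gamma$ with $R_{i-1} \subseteq T_{m_i}$ and $R_{i-1}^{m_i} = R_i$.

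The remaining task is to assemble $P_i := Q_i R_i$ and check it is a valid object chain in $\Delta_0$ for $w$. Each $P_i = Q_i R_i$ is a subgroup of $S = S_\X T$ (since $Q_i \le S_\X$ normalizes $R_i \le T$, using $S_\X \subseteq N_\L(T)$ which is part of Hypothesis~\ref{H:ProdLocInternal} via Lemma~\ref{L:SylowProduct}), and lies in $\Delta_0$ by its definition since $Q_i \in \Delta_\X$, $R_i \in \Gamma$, $Q_i \subseteq \C_\X(\N)$. To check $P_{i-1} \subseteq S_{f_i}$ and $P_{i-1}^{f_i} = P_i$ for $f_i = \Pi(x_i, n_i)$, I would invoke Lemma~\ref{L:Delta0Closed}: that lemma states precisely that $Q_{i-1}R_{i-1} \subseteq S_{f_i}$ if and only if $Q_{i-1} \subseteq S_{x_i}$ and $R_{i-1}^{x_i} \subseteq S_{n_i}$, and that in that case $(Q_{i-1}R_{i-1})^{f_i} = Q_{i-1}^{x_i}(R_{i-1}^{x_i})^{n_i}$. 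So I need $R_{i-1}^{x_i} \subseteq S_{n_i}$ and $R_{i-1}^{x_i}{}^{n_i} = R_i$; but $R_{i-1}^{x_i}{}^{n_i} = R_{i-1}^{\Pi(x_i,n_i^{?})}$ — here I must be careful to reconcile the two ways of writing the $\N$-conjugation. The subtlety is that in the decomposition $m_i = n_i^{\Pi(x_{i+1},\dots,x_k)}$, whereas Lemma~\ref{L:Delta0Closed} conjugates by $n_i$ directly; I expect this bookkeeping — tracking how the $\Gamma$-chain for $w_\N$ transports to a chain adapted to the letters $f_i = \Pi(x_i,n_i)$, using Corollary~\ref{C:Conjnx} and Lemma~\ref{L:InternalExternalAction} to rearrange conjugations — to be the main obstacle. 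A cleaner route, which I would pursue first, is to not pass through $w_\N$ at all but rather argue directly: process $w$ from the right, choosing $R_k \in \Gamma$ arbitrarily (or containing $P\cap\N$ for a given $P$) and defining $R_{i-1}$ by pulling $R_i$ back appropriately, and similarly for the $Q_i$, so that the object chain is built letter-by-letter for the $f_i$ directly via Lemma~\ref{L:Delta0Closed}. Either way, once the chain $P_0,\dots,P_k \in \Delta_0$ is in hand, $w \in \D(\L)_{\Delta_0}$ and the proof is complete.
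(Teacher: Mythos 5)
Your outline for the inclusion $\D(\L)\subseteq\D(\L)_{\Delta_0}$ matches the paper's Step~1 in spirit: decompose via (SD3), get chains $Q_i\in\Delta_\X$ and $R_i\in\Gamma$, shrink the $Q_i$ into $\C_\X(\N)$ using (\ref{star}) and Corollary~\ref{C:CXNnormalX}, and assemble $P_i=Q_iR_i$ via Lemma~\ref{L:Delta0Closed}. But the step you flag as ``the main obstacle'' is left unresolved, and it is exactly where the work lies. The chain $R_0,\dots,R_k$ is adapted to the letters $\ov{n}_i=n_i^{y_i}$ of $w_\N$ (with $y_i=\Pi(x_{i+1},\dots,x_k)$), while Lemma~\ref{L:Delta0Closed} needs subgroups conjugated by $x_i$ and then by $n_i$ itself. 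The paper's fix is to replace $R_i$ by the twisted subgroups $\ov{R}_i:=R_i^{y_i^{-1}}$ and set $P_i=Q_i\ov{R}_i$; one then checks, using $y_i^{-1}=y_{i-1}^{-1}x_i$, Lemma~\ref{L:InternalExternalAction} and Corollary~\ref{C:Conjnx}(c), that $\ov{R}_{i-1}^{x_i}\subseteq\D(n_i)$ and $(\ov{R}_{i-1}^{x_i})^{n_i}=\ov{R}_i$. Your proposed ``cleaner route'' of building the chain letter-by-letter from the right with an arbitrary $R_k\in\Gamma$ does not work: the locality axiom for $\N$ only asserts the \emph{existence} of some $\Gamma$-chain for the word $w_\N$, not that every $R_k\in\Gamma$ can serve as its right endpoint, so there is no way to ``pull back'' without first extracting the chain from $w_\N\in\D(\N)=\D(\N)_\Gamma$ and then twisting as above.

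The more serious error is your opening claim that $\D(\L)_{\Delta_0}\subseteq\D(\L)$ is ``immediate since $\Delta_0$ consists of subgroups of $S$.'' It is not. Membership of $w=(f_1,\dots,f_k)$ in $\D(\L)_{\Delta_0}$ only says that each individual element of $P_{i-1}$ can be conjugated by $f_i$, i.e.\ certain words of length three lie in $\D(\L)$; nothing in the partial group axioms forces the word $w$ itself into $\D(\L)$ --- indeed, the identity $\D(\L)=\D(\L)_\Delta$ is precisely the nontrivial defining condition of a locality. The paper's Step~2 proves this inclusion by a genuine argument: given a chain $P_0=Q_0\ov{R}_0,\dots,P_k$ in $\Delta_0$, it inductively produces $Q_i\in\Delta_\X$ and $\ov{R}_i\in\Gamma$ via Lemma~\ref{L:Delta0Closed}, deduces $w_\X\in\D(\X)$ from the locality structure on $\X$, then constructs $R_i:=\ov{R}_i^{y_i}$ and verifies via Corollary~\ref{C:Conjnx}(c) that $w_\N\in\D(\N)$ via $R_0,\dots,R_k$, and finally invokes the ``if'' direction of (SD3) to conclude $w\in\D(\L)$. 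This half of the lemma is missing entirely from your proposal.
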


\begin{proof}
Let $w=(f_1,\dots,f_k)\in\W(\L)$ and write $f_i=\Pi(x_i,n_i)$ with $x_i\in\X$ and $n_i\in\N$. Then $w_\X=(x_1,\dots,x_k)$. If $w_\X\in\D(\L)$, set $y_i:=\Pi(x_{i+1},\dots,x_k)$ for all $0,1,\dots,k$ (meaning $y_k=\Pi(\emptyset)=\One)$, and $\ov{n}_i=n_i^{y_i}$ for all $i=1,\dots,k$; note that $y_0,y_1,\dots,y_k$, $\ov{n}_1,\dots,\ov{n}_k$ and $w_\N:=(\ov{n}_1,\dots,\ov{n}_k)$ are well-defined in this case.

\smallskip

\emph{Step~1:} We show that $\D(\L)\subseteq\D(\L)_{\Delta_0}$. 

\smallskip

For the proof assume $w\in\D(\L)$. Then by (SD3), $w_\X=(x_1,\dots,x_k)\in\D(\L)\cap\W(\X)=\D(\X)$ and $w_\N=(\ov{n}_1,\dots,\ov{n}_k)$ is well-defined and an element of $\D(\L)\cap\W(\N)=\D(\N)$. As $(\X,\Delta_\X,S_X)$ and $(\N,\Gamma,T)$ are localities, it follows that there exist $Q_0,\dots,Q_k\in\Delta_\X$ and $R_0,\dots,R_k\in\Gamma$ such that $Q_{i-1}\subseteq\D(x_i)$, $Q_{i-1}^{x_i}=Q_i$, $R_{i-1}\subseteq\D(\ov{n}_i)$ and $R_{i-1}^{\ov{n}_i}=R_i$ for $i=1,\dots,k$. By (\ref{star}) and Corollary~\ref{C:CXNnormalX}, replacing $Q_i$ by $Q_i\cap \C_\X(\N)$, we may assume $Q_i\leq \C_\X(\N)$ for all $i=1,\dots,k$. For $i=0,1,\dots,k$ set 
\[\ov{R}_i:=R_i^{{y_i}^{-1}}\mbox{ and }P_i=Q_i\ov{R}_i.\]
Notice that, as $\X$ acts on $\Gamma$, we have $\ov{R}_i\in\Gamma$ and so $P_i\in\Delta_0$ for $i=0,1,\dots,n$. Let now $i\in\{1,\dots,k\}$. As $R_{i-1}\subseteq \D(\ov{n}_i)=\D(n_i^{y_i})$, it follows from Corollary~\ref{C:Conjnx}(c) that $R_{i-1}^{y_i^{-1}}\subseteq\D(n_i)$ and
\begin{eqnarray}\label{2}
((R_{i-1}^{y_i^{-1}})^{n_i})^{y_i}=R_{i-1}^{\ov{n}_i}=R_i.
\end{eqnarray}
Using $y_i^{-1}=\Pi(x_k^{-1},\dots,x_{i+1}^{-1})=\Pi(x_k^{-1},\dots,x_{i+1}^{-1},x_i^{-1},x_i)=\Pi(x_k^{-1},\dots,x_{i}^{-1})x_i=y_{i-1}^{-1}x_i$ and Lemma~\ref{L:InternalExternalAction}, one sees now that $\ov{R}_{i-1}^{x_i}=R_{i-1}^{y_{i-1}^{-1}x_i}=R_{i-1}^{y_i^{-1}}\subseteq\D(n_i)$. So conjugating (\ref{2}) with $y_i^{-1}$ and using Lemma~\ref{L:InternalExternalAction} again, it follows 
\[(\ov{R}_{i-1}^{x_i})^{n_i}=(R_{i-1}^{y_i^{-1}})^{n_i}=R_i^{y_i^{-1}}=\ov{R}_i.\]
In particular, $\ov{R}_{i-1}^{x_i}\subseteq S_{n_i}$. By the choice of $Q_0,Q_1,\dots,Q_k$, we have $Q_{i-1}\leq S_{x_i}$, $Q_{i-1}^{x_i}=Q_i$ and $Q_{i-1}\subseteq \C_\X(\N)$. So by Lemma~\ref{L:Delta0Closed}, we have $P_{i-1}\leq S_{f_i}$ and 
\[P_{i-1}^{f_i}=(Q_{i-1}\ov{R}_{i-1})^{f_i}=Q_{i-1}^{x_i}(\ov{R}_{i-1}^{x_i})^{n_i}=Q_i\ov{R}_i=P_i.\] 
Since $i\in\{1,\dots,k\}$ was arbitrary, this shows that $w=(f_1,\dots,f_k)\in\D(\L)_{\Delta}$ via $P_0,P_1,\dots,P_k$. This completes Step~1.

\smallskip

\emph{Step~2:} We show $\D(\L)_{\Delta_0}\subseteq\D(\L)$.

\smallskip

For the proof assume $w\in\D(\L)_{\Delta_0}$ via $P_0,P_1,\dots,P_k\in\Delta_0$. Write $P_0=Q_0\ov{R}_0$ where $Q_0\in\Delta_\X$ and $\ov{R}_0\in\Gamma$ with $Q_0\subseteq \C_\X(\N)$. For $i=1,\dots,k$ set $Q_i:=Q_{i-1}^{x_i}$ and $\ov{R}_i:=(\ov{R}_{i-1}^{x_i})^{n_i}$. Using induction on $i$, it follows from Lemma~\ref{L:Delta0Closed} that for $i=0,1,\dots,k$, $Q_i$ and $\ov{R}_i$ are well-defined (i.e. if $i\geq 1$, $Q_{i-1}\subseteq \D(x_i)$ and $R_{i-1}^{x_i}\subseteq\D(n_i)$), $P_i=Q_i\ov{R}_i$, $Q_i\in \Delta_\X$, $\ov{R}_i\in\Gamma$ and $Q_i\subseteq \C_\X(\N)$. As $(\X,\Delta_\X,S_\X)$ is a locality, we can in particular conclude that $w_\X=(x_1,\dots,x_k)\in\D(\X)=\D(\L)\cap\W(\X)$ via $Q_0,Q_1,\dots,Q_k$. So $y_0,y_1,\dots,y_k$, $\ov{n}_1,\dots,\ov{n}_k$ and $w_\N=(\ov{n}_1,\dots,\ov{n}_k)$ are well-defined. As $\Gamma$ is $\X$-invariant, we have for all $i=0,1,\dots,k$ that $R_i:=\ov{R}_i^{y_i}\in\Gamma$. We will show that $w_\N\in\D(\N)=\D(\L)\cap\W(\N)$ via $R_0,R_1,\dots,R_k$. For that fix $i\in\{1,\dots,k\}$. Since $y_i^{-1}=y_{i-1}^{-1}x_i$ (as seen in Step~1), we have  $R_{i-1}^{y_i^{-1}}=(R_{i-1}^{y_{i-1}^{-1}})^{x_i}=\ov{R}_{i-1}^{x_i}\subseteq\D(n_i)$. Hence, Corollary~\ref{C:Conjnx}(c) gives $R_{i-1}\subseteq\D(n_i^{y_i})=\D(\ov{n}_i)$ and $R_{i-1}^{\ov{n}_i}=((R_{i-1}^{y_i^{-1}})^{n_i})^{y_i}=((\ov{R}_{i-1}^{x_i})^{n_i})^{y_i}=\ov{R}_i^{y_i}=R_i$. As $i$ was arbitrary and $(\N,\Gamma,T)$ is a locality, this shows that $w_\N=(\ov{n}_1,\dots,\ov{n}_k)\in\D(\N)=\D(\L)\cap\W(\N)$ via $R_0,R_1,\dots,R_k$. So we have shown that $w_\X\in\D(\L)$ and $w_\N\in\D(\L)$. Hence, by (SD3), it follows $w\in\D(\L)$. This completes the proof of Step~2. Together with Step~1, this yields the assertion.
\end{proof}

\begin{lemma}\label{L:ProdLocInternal}
Assume Hypothesis~\ref{H:ProdLocInternal}. Then $(\L,\Delta,S)$ and $(\L,\Delta^+,S)$ are localities.
\end{lemma}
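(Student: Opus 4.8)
The plan is to verify the three defining conditions of a locality (Definition~\ref{locality}) for each of $(\L,\Delta,S)$ and $(\L,\Delta^+,S)$, leaning heavily on Lemma~\ref{L:ConstructLocality} to reduce the work. First recall that $S=S_\X T$ is a $p$-subgroup of $\L$ by Lemma~\ref{L:SylowProduct}(b), and in fact a \emph{maximal} $p$-subgroup by Lemma~\ref{L:SylowProduct}(c), since $S_\X$ is a maximal $p$-subgroup of $\X$ and $T$ is a maximal $p$-subgroup of $\N$. This disposes of condition (1) for both triples. For the set $\Delta_0$ of Hypothesis~\ref{H:ProdLocInternal}, we already know from Lemma~\ref{L:Delta0Closed} that $\Delta_0$ is closed under taking $\L$-conjugates in $S$, and from Lemma~\ref{L:DDelta0} that $\D(\L)=\D(\L)_{\Delta_0}$. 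Since $\Delta$ is by definition exactly the set of subgroups of $S$ containing an element of $\Delta_0$, Lemma~\ref{L:ConstructLocality} applies verbatim (with $\Delta_0$ in the role of ``$\Delta_0$'' and $\Delta$ in the role of ``$\Delta$'') and yields that $(\L,\Delta,S)$ is a locality. That is the easy half.

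The substantive part is $(\L,\Delta^+,S)$. I would first record that $\Delta_0\subseteq\Delta^+$: if $P=QR$ with $Q\in\Delta_\X$, $Q\subseteq\C_\X(\N)$, $R\in\Gamma$, then by Lemma~\ref{L:RemarkProdLocInternal}(a),(b) and the uniqueness of the $\X$-$\N$ factorization of elements of $S$ one computes $P_\X=Q$ and $P_\N=R$, so $P\in\Delta^+$. Next I would check that $\Delta^+$ is closed under overgroups in $S$: this is \emph{not} automatic from the definition, so one must argue that if $P\leq P'\leq S$ and $P_\X\in\Delta_\X$, $P_\N\in\Gamma$, then $P'_\X\in\Delta_\X$ and $P'_\N\in\Gamma$. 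Using Lemma~\ref{L:RemarkProdLocInternal}(c), one may replace $P,P'$ by $P\cap S_0,P'\cap S_0$ where $S_0=\C_{S_\X}(\N)T$, and inside $S_0$ the maps $P\mapsto P_\X$ and $P\mapsto P_\N$ are the two coordinate projections of the \emph{direct} product $\C_{S_\X}(\N)\times T$ (note $\C_{S_\X}(\N)$ centralizes $T\leq\N$); hence $P\leq P'$ forces $P_\X\leq P'_\X$ and $P_\N\leq P'_\N$, and closure under overgroups in $\Delta_\X$ and $\Gamma$ gives the claim. The same direct-product picture, together with Lemma~\ref{L:Delta0Closed} and the fact that $S_0$ is strongly closed in $\F_S(\L)$, should give that $\Delta^+$ is closed under $\L$-conjugation in $S$.

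For condition (2), $\D(\L)=\D(\L)_{\Delta^+}$: the inclusion $\D(\L)_{\Delta^+}\subseteq\D(\L)$ needs an argument of the same flavour as Step~2 of Lemma~\ref{L:DDelta0} — given $w\in\D(\L)_{\Delta^+}$ via $P_0,\dots,P_k\in\Delta^+$, pass to $P_i\cap S_0$, split into the $\X$-component chain $(P_i)_\X$ and the $\N$-component chain $(P_i)_\N$ (the latter needing a conjugation back by the $y_i$'s exactly as in Lemma~\ref{L:DDelta0}), apply that $(\X,\Delta_\X,S_\X)$ and $(\N,\Gamma,T)$ are localities to see $w_\X\in\D(\L)$ and $w_\N\in\D(\L)$, and conclude by (SD3). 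The reverse inclusion $\D(\L)\subseteq\D(\L)_{\Delta^+}$ is immediate since $\D(\L)=\D(\L)_{\Delta_0}\subseteq\D(\L)_{\Delta^+}$ by $\Delta_0\subseteq\Delta^+$ and Remark~\ref{R:DDelta}. Alternatively, and more cheaply, once one knows $\Delta_0\subseteq\Delta^+$, that $\Delta^+$ is closed under $\L$-conjugates in $S$ and overgroups in $S$, and that $\D(\L)=\D(\L)_{\Delta_0}$, one can invoke Lemma~\ref{L:ConstructLocality1}(a) (with $\Delta_0$ and $\Delta^+$ in the roles of $\Delta_0$ and $\Delta$, using that every $P\in\Delta^+$ contains the element $P_\X P_\N\in\Delta_0$ — here one should double-check $P_\X P_\N\leq P$, which again follows from the $S_0$-reduction) to get $\D(\L)=\D(\L)_{\Delta^+}$ directly, avoiding a second long manipulation. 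The main obstacle, and the step I would spend the most care on, is verifying that $\Delta^+$ is closed under $\L$-conjugates in $S$: unlike $\Delta_0$ this is not handled by an already-proven lemma, and it requires carefully tracking how the projections $P_\X,P_\N$ transform under $f=\Pi(x,n)$, which is where the reduction to $S_0$ and the direct-product structure $\C_{S_\X}(\N)\times T$, combined with Lemma~\ref{L:Delta0Closed} applied componentwise, does the real work.
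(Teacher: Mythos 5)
Your overall strategy matches the paper's: the $(\L,\Delta,S)$ half is exactly Lemma~\ref{L:SylowProduct} plus Lemma~\ref{L:ConstructLocality} applied to $\Delta_0$ (via Lemmas~\ref{L:Delta0Closed} and \ref{L:DDelta0}), and for $(\L,\Delta^+,S)$ the paper likewise reduces to $S_0=\C_{S_\X}(\N)T$ using Lemma~\ref{L:RemarkProdLocInternal}(c) and then tracks how the projections $P_\X$, $P_\N$ transform under conjugation by $f=\Pi(x,n)$. One remark on the conjugation-closure step that you rightly single out as the crux: the statement of Lemma~\ref{L:Delta0Closed} concerns the product subgroups $QR$ and does not by itself let you pass from ``$P\leq S_f$'' to ``$P_\X\leq S_x$ and $(P_\N)^x\leq S_n$'' for a general (e.g.\ diagonal) subgroup $P\leq S_0$; what is needed is the elementwise equivalence of Lemma~\ref{L:Conjugate}(a), namely that $st\in\D(f)$ iff $s\in\D(x)$ and $t^x\in\D(n)$, with $(st)^f=s^x(t^x)^n$, combined with the fact that both projections are surjective on $P$. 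This is exactly how the paper's Step~1 runs, and your reduction to $S_0$ is the right setup for it.

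The one genuine flaw is your ``cheaper'' alternative for $\D(\L)=\D(\L)_{\Delta^+}$: for $P\in\Delta^+$ one has $P\cap S_0\leq P_\X P_\N$, i.e.\ $P_\X P_\N$ is an \emph{overgroup} of $P\cap S_0$, not a subgroup of $P$ (take $P$ a diagonal subgroup of the direct product $\C_{S_\X}(\N)\times T$ to see that $P_\X P_\N\not\leq P$ in general). So the hypothesis of Lemma~\ref{L:ConstructLocality1}(a) --- that every member of the larger collection contains a member of $\Delta_0$ --- fails, and the check you flagged does indeed fail. The paper circumvents this by going in the opposite direction: it defines $\gamma\colon\Delta^+\rightarrow\Delta_0$, $P\mapsto P_\X P_\N$, verifies $\gamma(P)\leq S_f$ and $\gamma(P)^f=\gamma(P^f)$, and invokes Remark~\ref{R:DDelta} to conclude $\D(\L)_{\Delta^+}\subseteq\D(\L)_{\Delta_0}=\D(\L)$. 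Your first route (redoing Step~2 of Lemma~\ref{L:DDelta0} with the chains $(P_i)_\X$ and $(P_i)_\N$) is sound and would also work, so the proposal as a whole stands, but the alternative should be discarded or replaced by the $\gamma$/Remark~\ref{R:DDelta} argument.
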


\begin{proof}
As $(\X,\Delta_\X,S_\X)$ and $(\N,\Gamma,T)$ are localities, $S_\X$ and $T$ are maximal $p$-subgroups of $\X$ and $\N$ respectively. Moreover, as $\X$ leaves $\Gamma$ invariant and $T$ is the unique maximal member of $\Gamma$, we have $S_\X\subseteq\X\subseteq N_\L(T)$. Hence, by Lemma~\ref{L:SylowProduct}, $S_\X T$ is a maximal $p$-subgroup of $\L$. It follows thus from Lemma~\ref{L:ConstructLocality}, Lemma~\ref{L:Delta0Closed} and Lemma~\ref{L:DDelta0} that $(\L,\Delta,S)$ is a locality. In particular, for every $f\in\L$, $S_f$ is a subgroup of $S$. Moreover, for all $P\leq S_f$, $P^f$ is a subgroup of $S$. 

\smallskip

We argue now that $(\L,\Delta^+,S)$ is a locality. Note that $\Delta^+$ is closed under taking overgroups in $S$, since $\Delta_\X$ and $\Gamma$ are closed under taking overgroups in $S_\X$ and $T$ respectively. It remains to show that $\Delta^+$ is closed under taking $\L$-conjugates in $S$, and that $\D(\L)=\D(\L)_{\Delta^+}$. We will prove these properties in the following two steps.

\smallskip

\emph{Step~1:} Let $P\in\Delta^+$ with $P\leq \C_{S_\X}(\N)T$ and $f=\Pi(x,n)\in\L$ with $x\in\X$ and $n\in\N$. We show that $P\leq S_f$ if and only $P_\X\leq S_x$ and $(P_\N)^x\leq S_n$. Moreover, if so, then $P^f\leq \C_{S_\X}(\N)T$, $(P^f)_\X=(P_\X)^x\in\Delta_\X$,  $(P^f)_\N=((P_\N)^x)^n\in\Gamma$ and $P^f\in\Delta^+$.

\smallskip

Essentially, this follows from Lemma~\ref{L:Conjugate}(a). Namely, given $g=st\in P$ with $s\in P_\X$ and $t\in P_\N$, it follows from this lemma that $g\in\D(f)$ if and only if $s\in\D(x)$ and $t^x\in\D(n)$; moreover, if so, then $g^f=(st)^{xn}=s^x(t^x)^n$. If $g\in \D(f)$, then note that $s^x\in\X$ and $(t^x)^n\in\N$. By (SD2), every element of $\L$ can be written uniquely as a product of an element of $\X$ and an element of $\N$. So if $f\in\D(f)$, then $g^f=s^x(t^x)^n\in S=S_\X T$ if and only if $s^x\in S_\X$ and $(t^x)^n\in T$. Since $S_\X=S\cap\X$ and $T=S\cap\N$, we can conclude altogether that $g\in S_f$ if and only if $s\in S_x$ and $t^x\in S_n$; moreover, if so then $g^f=s^x(t^x)^n$. As $P\leq \C_{S_\X}(\N)T$, it follows from the definition of $P_\X$ and $P_\N$ that every element $g\in P$ can be written as a product $g=st$ with $s\in P_\X$ and $t\in P_\N$. The other way around, for every $s\in P_\X$, there exists $t\in P_\N$ with $st\in P$, and for every $t\in P_\N$, there exists $s\in P_\X$ with $st\in P$. So what we proved implies that $P\leq S_f$ if and only if $P_\X\leq S_x$ and $(P_\N)^x\leq S_n$. Moreover, if this is the case then, since $\C_\X(\N)\unlhd \X$ by Lemma~\ref{C:CXNnormalX}, we can conclude that  $P^f\leq \C_{S_\X}(\N)T$, $(P^f)_\X=(P_\X)^x\leq \C_{S_\X}(\N)$ and $(P^f)_\N=((P_\N)^x)^n$. Since $(\X,\Delta_\X,S_\X)$ and $(\N,\Gamma,T)$ are localities and $\Gamma$ is $\X$-invariant, the claim above follows and Step~1 is complete.

\smallskip

For the next two steps, set $S_0:=\C_{S_\X}(\N)T$. We will use without further reference that, by Lemma~\ref{L:RemarkProdLocInternal}(c), $S_0$ is strongly closed and, for every $Q\leq S$, we have $Q\in\Delta^+$ if and only if $Q\cap S_0\in\Delta^+$.

\smallskip

\emph{Step~2:} We argue that $\Delta^+$ is closed under taking $\L$-conjugates.

\smallskip

Let $P\in\Delta^+$ and $f\in\L$ with $P\leq S_f$. As $P\in\Delta^+$, we have also $P\cap S_0\in\Delta^+$. Hence, by Step~1, $(P\cap S_0)^f\in\Delta^+$. As $S_0$ is strongly closed, $P^f\cap S_0=(P\cap S_0)^f$. So $P^f\in\Delta^+$ completing Step~2.

\smallskip

\emph{Step~3:} We show that $\D(\L)=\D(\L)_{\Delta^+}$. 

\smallskip

Note first that $\Delta\subseteq\Delta^+$ and thus $\D(\L)=\D(\L)_{\Delta}\subseteq\D(\L)_{\Delta^+}$. Define
\[\gamma\colon \Delta^+\rightarrow\Delta_0,\;P\mapsto P_\X P_\N.\]
Let $P,Q\in\Delta^+$ with $f=\Pi(x,n)\in\L$ with $P\subseteq S_f$ and $P^f=Q$. We want to argue that $\D(\L)_{\Delta^+}\subseteq \D(\L)_{\Delta_0}=\D(\L)$ with the latter equality using Lemma~\ref{L:DDelta0}. By Remark~\ref{R:DDelta}, it is sufficient to show that $\gamma(P)\leq S_f$ and $\gamma(P)^f=\gamma(Q)$. As $P$ and $Q$ are elements of $\Delta^+$, $P\cap S_0$ and $Q\cap S_0$ are elements of $\Delta^+$ too. Moreover, $S_0$ is strongly closed in $\F_S(\L)$ and thus $(P\cap S_0)^f=P^f\cap S_0=Q\cap S_0$. By Lemma~\ref{L:RemarkProdLocInternal}(c), we have $P_\X=(P\cap S_0)_\X$ and $P_\N=(P\cap S_0)_\N$, thus $\gamma(P)=\gamma(P\cap S_0)$ and similarly $\gamma(Q)=\gamma(Q\cap S_0)$. Therefore, replacing $P$ and $Q$ by $P\cap S_0$ and $Q\cap S_0$, we may assume that $P$ and $Q$ are contained in $S_0$. It follows then from Lemma~\ref{L:Delta0Closed} and Step~1 that $\gamma(P)\subseteq S_f$ and $\gamma(P)^f=P_\X^x(P_\N^x)^n=\gamma(P^f)=\gamma(Q)$. This completes Step~3 and thus the proof of the assertion.
\end{proof}

\begin{cor}\label{C:ProdLocInternal}
Assume Hypothesis~\ref{H:ProdLocInternal} and let $\Delta^*$ be a set of subgroups of $S$ such that $\Delta\subseteq \Delta^*\subseteq\Delta^+$ and $\Delta^*$ is closed under taking $\L$-conjugates and overgroups in $S$. Then $(\L,\Delta^*,S)$ is a locality.
\end{cor}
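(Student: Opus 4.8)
The plan is to verify directly that the triple $(\L,\Delta^*,S)$ satisfies the three conditions in Definition~\ref{locality}, using Lemma~\ref{L:ProdLocInternal} as the essential input. By that lemma, $(\L,\Delta,S)$ and $(\L,\Delta^+,S)$ are both localities; in particular $\L$ is a finite partial group, $S$ is maximal among the $p$-subgroups of $\L$, and $\D(\L)=\D(\L)_{\Delta}=\D(\L)_{\Delta^+}$. The maximality of $S$ gives condition~(1) of Definition~\ref{locality} at once. Condition~(3) is exactly part of the hypothesis imposed on $\Delta^*$, namely that $\Delta^*$ is a set of subgroups of $S$ closed under taking $\L$-conjugates and overgroups in $S$; moreover $\Delta^*$ is non-empty, since it contains $\Delta$, which is the object set of the locality $(\L,\Delta,S)$ and hence non-empty by Definition~\ref{locality}.

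It remains only to establish condition~(2), that $\D(\L)=\D(\L)_{\Delta^*}$. Here I would apply the monotonicity statement in Remark~\ref{R:DDelta} to the chain of inclusions $\Delta\subseteq\Delta^*\subseteq\Delta^+$, obtaining $\D(\L)_{\Delta}\subseteq\D(\L)_{\Delta^*}\subseteq\D(\L)_{\Delta^+}$. Since $(\L,\Delta,S)$ and $(\L,\Delta^+,S)$ are localities, the leftmost and rightmost sets both equal $\D(\L)$, and therefore $\D(\L)_{\Delta^*}=\D(\L)$ as well. Thus all three conditions of Definition~\ref{locality} hold, and $(\L,\Delta^*,S)$ is a locality.

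I do not expect any genuine obstacle in carrying this out: the corollary is essentially immediate once one has Lemma~\ref{L:ProdLocInternal} in hand, the only real point being the observation that $\Delta$ and $\Delta^+$ ``sandwich'' $\Delta^*$ while producing the same domain $\D(\L)$, so that the sandwiched set $\Delta^*$ must do the same. The slightly delicate verifications (that $S=S_\X T$ is a maximal $p$-subgroup, that $\D(\L)$ is correctly described by $\Delta_0$, $\Delta$ and $\Delta^+$) have all already been absorbed into the proof of Lemma~\ref{L:ProdLocInternal}, so nothing of substance needs to be redone.
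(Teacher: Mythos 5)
Your proposal is correct and follows essentially the same route as the paper: both arguments sandwich $\D(\L)_{\Delta^*}$ between $\D(\L)_{\Delta}$ and $\D(\L)_{\Delta^+}$ via the monotonicity of $\D(\L)_{(-)}$ and then invoke Lemma~\ref{L:ProdLocInternal} to conclude that all three coincide with $\D(\L)$, with the maximality of $S$ and the closure hypotheses on $\Delta^*$ supplying the remaining axioms. No gaps.
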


\begin{proof}
As $\Delta\subseteq\Delta^*\subseteq\Delta^+$, we have $\D(\L)_\Delta\subseteq\D(\L)_{\Delta^*}\subseteq\D(\L)_{\Delta^+}$. It follows from  Lemma~\ref{L:ProdLocInternal} that $S$ is a maximal $p$-subgroup of $\L$ and $\D(\L)_{\Delta}=\D(\L)=\D(\L)_{\Delta^+}$. In particular, we have that $\D(\L)_{\Delta^*}=\D(\L)$. As $\Delta^*$ is by assumption closed under taking $\L$-conjugates and overgroups in $S$, this shows that $(\L,\Delta^*,S)$ is a locality.
\end{proof}

\subsection{Internal and external semidirect products of localities}\label{Ss:SemidirectLocalitiesDef}

The results we proved in the previous subsection motivate the following definitions.

\begin{definition}[Internal semidirect product of localities]\label{D:InternalProdLoc}
Let $(\X,\Delta_\X, S_\X)$ and $(\N,\Gamma,T)$ be sublocalities of a locality $(\L,\Delta^*,S)$. We say that $(\L,\Delta^*,S)$ is an \emph{internal semidirect product} of $(\X,\Delta_\X,S_\X)$ with $(\N,\Gamma,T)$ if the following properties hold:
\begin{itemize}
\item $\L$ is the internal semidirect product of $\X$ with $\N$ and $S=S_\X T$;
\item $\X$ leaves $\Gamma$ invariant, i.e. $R^x\in\Gamma$ for all $R\in\Gamma$ and $x\in\X$;
\item $Q\cap \C_\X(\N)\in\Delta_\X\mbox{ for all }Q\in\Delta_\X$;
\item $\Delta\subseteq \Delta^*\subseteq\Delta^+$, where $\Delta$ and $\Delta^+$ are the sets of subgroups defined in Hypothesis~\ref{H:ProdLocInternal}.
\end{itemize}
More precisely, we say then that $(\L,\Delta^*,S)$ is the internal semidirect product of the locality $(\X,\Delta_\X,S_\X)$ with the locality $(\N,\Gamma,T)$. If in addition $\Delta^*=\Delta$, this internal semidirect product is said to be \emph{sparse}. If $\Delta^*=\Delta^+$, we call the internal semidirect product \emph{ample}.
\end{definition}

\begin{definition}[Actions of localities on localities]
We say that a locality $(\X,\Delta_\X,S_\X)$ \emph{acts} on a locality $(\N,\Gamma,T)$ if the partial group $\X$ acts on the partial group $\N$ via a homomorphism $\phi\colon\X\rightarrow \N$ and in addition the following two properties hold:
\begin{itemize}
 \item $\X$ acts on $\Gamma$, i.e. $R^{x^\phi}\in\Gamma$ for all $R\in\Gamma$ and $x\in\X$;
 \item we have $Q\cap \C_\X^\phi(\N)\in\Delta_\X$ for every $Q\in\Delta_\X$.
\end{itemize}
If such $\phi$ is given, we say also that $(\X,\Delta_\X,S_\X)$ acts on $(\N,\Gamma,T)$ via $\phi$.
\end{definition}

Note that, if $(\X,\Delta_\X,S_\X)$ and $(\N,\Gamma,T)$ are as in the above definition, then $\X$ acts also on $T$, since $\X$ acts on $\Gamma$ and $T$ is the unique maximal member of $\Gamma$.

\begin{definition}[External semidirect product of localities]\label{D:ExtProdLoc}
Suppose a locality $(\X, \Delta_\X, S_\X)$ acts on a locality $(\N, \Gamma,T)$ via some homomorphism $\phi\colon \X\rightarrow \Aut(\N)$. Set $\L = \X \ltimes_\varphi \N$ and $S = (S_\X, T)$ (and note that $S$ is a $p$-subgroup of $\L$ by Theorem~\ref{external.internal.par} and Lemma~\ref{L:SylowProduct}). For $P\leq S$ set 
\[P_\X^\phi:=\{s\in \C_{S_\X}^\phi(\N)\colon \exists t\in T\mbox{ such that }(s,t)\in P\}\] 
and 
\[P_\N^\phi:=\{t\in T\colon \exists s\in \C_{S_\X}^\phi(\N)\mbox{ such that }(s,t)\in P\}.\]
Define moreover
\[\Delta_\phi:=\{P\leq S\colon \exists\; Q \in \Delta_\X\;\exists R \in \Gamma\mbox{ such that }(Q,R)\leq P\}\]
and
\[\Delta^+_\phi:=\{P\leq S\colon P_\X^\phi\in\Delta_\X\mbox{ and }P_\N^\phi\in\Gamma\}.\]
Then 
\begin{itemize}
\item the \emph{sparse external semidirect product} of $(\X,\Delta_\X,S_\X)$ with $(\N,\Gamma,T)$ (via $\phi$) is the triple $(\L,\Delta_\phi,S)$;
\item the \emph{ample external semidirect product} of $(\X,\Delta_\X,S_\X)$ with $(\N,\Gamma,T)$ (via $\phi$) is the triple $(\L,\Delta^+_\phi,S)$.
\end{itemize}
More generally, \emph{external semidirect product} of $(\X,\Delta_\X,S_\X)$ with $(\N,\Gamma,T)$ (via $\phi$) is a triple $(\L,\Delta^*,S)$, where $\Delta_\phi\subseteq\Delta^*\subseteq\Delta^+_\phi$ and $\Delta^*$ is closed under taking $\L$-conjugates and overgroups in $S$. 
\end{definition}

Note that it is a priori not completely clear that our definitions makes sense, i.e. that sparse and ample external semidirect products are special cases of external semidirect products as defined at the end of Definition~\ref{D:ExtProdLoc}; this is because it is not clear that $\Delta_\phi$ and $\Delta^+_\phi$ are closed under taking $\L$-conjugates and overgroups in $S$. We will prove this however in part (a) of the following lemma. We show moreover some other crucial properties:  Every external semidirect product of localities is a locality, which is an internal semidirect product of canonically defined sublocalities.

\begin{lemma}\label{Ext.loc}
Let  $(\X, \Delta_\X, S_\X)$ be a locality acting on a locality $(\N, \Gamma,T)$ via a homomorphism $\phi$. Set $\L=\X\ltimes_\phi\N$ and $S=(S_\X,T)$. For parts (b),(c) and (d), let $(\L,\Delta^*,S)$ be an external semidirect product of $(\X, \Delta_\X, S_\X)$ with $(\N, \Gamma,T)$ via $\phi$. Then the following hold:
\begin{itemize}
\item [(a)] The sets $\Delta_\phi$ and $\Delta^+_\phi$ (as defined in Definition~\ref{D:ExtProdLoc}) are closed under taking $\L$-conjugates and overgroups in $S$; so the sparse external semidirect product and the ample external semidirect product of $(\X,\Delta_\X,S_\X)$ with $(\N,\Gamma,T)$ are indeed external semidirect products in the more general sense.
\item [(b)] The external semidirect product $(\L,\Delta^*,S)$ is a locality. 
\item [(c)] Adopting  Notation~\ref{N:ExternalCartesianProd}, set $\hat{\X}=(\X,\One)$, $\hat{S}_\X=(S_\X,\One)$, $\hat{\N}=(\One,\N)$, $\hat{T}=(\One,T)$, 
\[\hat{\Delta}_\X = \{(Q,\One) \colon Q \in \Delta_\X \} \mbox{ and } \hat{\Gamma}= \{(1,R) \colon R \in \Gamma \}.\]
Then $(\hat{\X},\hat{\Delta}_\X,\hat{S}_\X))$ and $(\hat{\N},\hat{\Gamma},\hat{T})$ are sublocalities of $(\L, \Delta^*, S)$, and $(\L,\Delta^*,S)$ is an internal semidirect product of $(\hat{\X}, \hat{\Delta}_\X, \hat{S}_\X)$ with $(\hat{\N}, \hat{\Gamma},\hat{T})$. 
\item [(d)] The external semidirect product $(\L,\Delta^*,S)$ is sparse if and only if it is a sparse internal semidirect product of $(\hat{\X}, \hat{\Delta}, \hat{S}_\X)$ with $(\hat{\N}, \hat{\Gamma},\hat{T})$. Similarly, $(\L,\Delta^*,S)$ is an ample external semidirect product if and only if it is an ample internal semidirect product of $(\hat{\X},\hat{\Delta},\hat{S}_\X)$ with $(\hat{\N},\hat{\Gamma},\hat{T})$.
\end{itemize}
\end{lemma}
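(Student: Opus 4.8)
The plan is to deduce everything from the machinery already built in Subsection~\ref{Ss:ConstructLocalities}. The crucial observation is that, by Theorem~\ref{external.internal.par}, $\L = \X\ltimes_\phi\N$ is the internal semidirect product of $\hat\X=(\X,\One)$ with $\hat\N=(\One,\N)$, and the formula $(\One,n)^{(x,\One)}=(\One,n^{x^\phi})$ given there shows that the conjugation action of $\hat\X$ on $\hat\N$ is carried, under the isomorphisms $\alpha\colon\X\to\hat\X$ and $\beta\colon\N\to\hat\N$ of Lemma~\ref{subgroups}(d), to the action $\phi$. So the first step is to verify Hypothesis~\ref{H:ProdLocInternal} with $(\hat\X,\hat\Delta_\X,\hat S_\X)$ and $(\hat\N,\hat\Gamma,\hat T)$ in the roles of $(\X,\Delta_\X,S_\X)$ and $(\N,\Gamma,T)$: that $(\hat\X,\hat\Delta_\X,\hat S_\X)$ and $(\hat\N,\hat\Gamma,\hat T)$ are localities (transport the locality structure along $\alpha$, $\beta$, using Lemma~\ref{subgroups}(b),(c),(d)), that $\hat S = \hat S_\X\hat T = (S_\X,T) = S$ (a direct computation with the product formula, $\Pi((s,\One),(\One,t))=(s,t)$), that $\hat\X$ leaves $\hat\Gamma$ invariant (because $\X$ acts on $\Gamma$), and that condition~\eqref{star} holds for $\hat\Delta_\X$ (because $\C_{\hat\X}(\hat\N)$ corresponds under $\alpha$ to $\C_\X^\phi(\N)$, so $(Q,\One)\cap\C_{\hat\X}(\hat\N) = (Q\cap\C_\X^\phi(\N),\One)\in\hat\Delta_\X$ by the assumed closure of $\Delta_\X$).

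Once Hypothesis~\ref{H:ProdLocInternal} is in force, I claim the sets $\Delta_0,\Delta,\Delta^+$ it produces coincide with the sets from Definition~\ref{D:ExtProdLoc}: the internal-product set $\Delta$ equals $\Delta_\phi$, and $\Delta^+$ equals $\Delta^+_\phi$. This comes down to matching the definitions: under the bijection $S = (S_\X,T)$, an element $(s,t)$ of $P\le S$ has ``$\X$-part'' $s$ and ``$\N$-part'' $t$ exactly as in the internal setting (using $\Pi((s,\One),(\One,t))=(s,t)$ and the uniqueness in (SD2)), so $P_{\hat\X}$ identifies with $P_\X^\phi$ and $P_{\hat\N}$ with $P_\N^\phi$; similarly a subgroup of the form $(Q,\One)(\One,R) = (Q,R)$ with $Q\in\Delta_\X$, $R\in\Gamma$, $Q\subseteq\C_\X^\phi(\N)$ is exactly a typical element of the $\Delta_0$ of Hypothesis~\ref{H:ProdLocInternal} (note $\Delta_\phi$ as written does not explicitly require $Q\subseteq\C_{S_\X}^\phi(\N)$, but one checks the two resulting sets of overgroups agree, since enlarging $Q$ to include central-in-$\N$ elements does not change which subgroups of $S$ contain such a product --- or one simply observes that closure under overgroups makes the two $\Delta$'s equal). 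Granting these identifications, part~(a) is immediate from Lemma~\ref{L:Delta0Closed} and Lemma~\ref{L:ProdLocInternal} (which show $\Delta=\Delta_\phi$ and $\Delta^+=\Delta^+_\phi$ are $\L$-closed in $S$ and closed under overgroups), part~(b) follows from Corollary~\ref{C:ProdLocInternal}, and part~(c) is just the assertion that $(\L,\Delta^*,S)$ satisfies Definition~\ref{D:InternalProdLoc} --- which it does, since all four bullet points there are precisely the hypotheses we have verified together with $\Delta\subseteq\Delta^*\subseteq\Delta^+$, i.e. $\Delta_\phi\subseteq\Delta^*\subseteq\Delta^+_\phi$. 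Part~(d) is then a tautology: sparse means $\Delta^*=\Delta_\phi=\Delta$ in both terminologies, and ample means $\Delta^*=\Delta^+_\phi=\Delta^+$.

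I expect the main obstacle to be the bookkeeping in identifying the decorated sets $P_\X^\phi$, $P_\N^\phi$, $\Delta_\phi$, $\Delta^+_\phi$ of Definition~\ref{D:ExtProdLoc} with the internally-defined sets $P_{\hat\X}$, $P_{\hat\N}$, $\Delta$, $\Delta^+$ attached to $\L$ viewed as an internal semidirect product --- in particular being careful that $\C_{\hat\X}(\hat\N)$ (conjugation centralizer in $\L$) really equals $(\C_\X^\phi(\N),\One)$, which uses the explicit conjugation formula of Theorem~\ref{external.internal.par}, and checking that the slightly different-looking definition of $\Delta_\phi$ (which builds in $(Q,R)\le P$ for $Q\in\Delta_\X$, $R\in\Gamma$ without the centrality restriction on $Q$) yields the same family of subgroups of $S$ as the $\Delta$ of Hypothesis~\ref{H:ProdLocInternal}. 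Everything else is a direct appeal to the already-established Lemma~\ref{L:ProdLocInternal} and Corollary~\ref{C:ProdLocInternal}.
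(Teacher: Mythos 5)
Your proposal is correct and follows essentially the same route as the paper: realise $\L$ as the internal semidirect product of $\hat{\X}$ with $\hat{\N}$ via Theorem~\ref{external.internal.par}, verify Hypothesis~\ref{H:ProdLocInternal} for the hatted data, identify $\Delta_\phi$ with $\Delta$ and $\Delta^+_\phi$ with $\Delta^+$ by computing $P_{\hat{\X}}=(P_\X^\phi,\One)$ and $P_{\hat{\N}}=(\One,P_\N^\phi)$, and then quote Lemma~\ref{L:ProdLocInternal} and Corollary~\ref{C:ProdLocInternal}. The one spot where you are (rightly) more careful than the paper is the identification $\Delta=\Delta_\phi$, where the absent centrality condition on $Q$ in $\Delta_\phi$ is handled by replacing $Q$ with $Q\cap \C_\X^\phi(\N)$, which lies in $\Delta_\X$ by the definition of an action of a locality on a locality.
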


\begin{proof}
By Theorem~\ref{external.internal.par} the partial group $\L =  \X \ltimes_\varphi \N$ is the internal semidirect product of the partial subgroup $\hat{\X}=(\X,\One)$ with the partial subgroup $\hat{\N}=(\One,\N)$. Moreover, $(\One,n)^{(x,\One)}=(\One,n^{x^\phi})$ and $(x,\One)(\One,n)=(x,n)$ for all $x\in \X$ and $n\in\N$. In particular, $S = (S_\X, T) = (S_\X, \One) \cdot (\One,T)=\hat{S}_\X\hat{T}$. Note also that $\hat{\X}$ leaves $\hat{\Gamma}$ invariant, since $\Gamma$ is $\X$-invariant.

\smallskip

\noindent By Lemma~\ref{subgroups}(c), the maps $\alpha\colon \X\rightarrow \hat{\X},x\mapsto (x,\One)$ and $\beta\colon\N\rightarrow\hat{\N},n\mapsto (\One,n)$ are isomorphisms of partial groups. Observe also that $S_\X^\alpha=\hat{S}_\X$, $\Delta_\X^\alpha=\hat{\Delta}_\X$, $T^\beta=\hat{T}$ and $\Gamma^\beta=\hat{\Gamma}$. Therefore, the fact that the triples $(\hat{\X}, \hat{\Delta}_\X,\hat{S}_\X)$ and $(\hat{\N},\hat{\Gamma},\hat{T})$ form localities follows directly from the assumption that $(\X, \Delta_\X, S_\X)$ and $(\N, \Gamma,T)$ are localities. One sees easily that $S\cap \hat{\X}=\hat{S}_\X$ and $S\cap \hat{\N}=\hat{T}$. Hence, $(\hat{\X},\hat{\Delta},\hat{S}_\X)$ and $(\hat{\N},\hat{\Gamma},\hat{T})$ are sublocalities of $(\L,\Delta^*,S)$.

\smallskip
 
\noindent Notice that $\C_{\hat{\X}}(\hat{\N}) = (\C_\X^\phi(\N), \One)$. Thus for every $(Q,\One) \in \hat{\Delta}_\X$ we have 
\[(Q,\One) \cap \C_{\hat{\X}}(\hat{\N}) = (Q,\One) \cap  (\C_\X^\phi(\N), \One) = (Q \cap \C_\X^\phi(\N), \One).\]
As $(\X,\Delta_\X,S_\X)$ acts on $(\N,\Gamma,T)$, we have $Q\cap \C_\X^\phi(\N)\in\Delta_\X$. So we conclude that $(Q,\One) \cap \C_{\hat{\X}}(\hat{\N}) \in \hat{\Delta}_\X$ whenever $(Q, \One) \in \hat{\Delta}_\X$. Thus, all assumptions in Hypothesis \ref{H:ProdLocInternal} are satisfied with $(\hat{\X}, \hat{\Delta}_\X,\hat{S}_\X)$ and $(\hat{\N},\hat{\Gamma},\hat{T})$ in place of $(\X,\Delta_\X,S_\X)$ and $(\N,\Gamma,T)$. Adopt the notation introduced there accordingly, so that in particular $\Delta$ and $\Delta^+$ are defined. Furthermore, define $\Delta_\phi$ and $\Delta^+_\phi$ as in Definition~\ref{D:ExtProdLoc}. Then 
\begin{eqnarray*}
\Delta&=&\{P\leq S\colon \exists (Q,\One)\in\hat{\Delta}_\X\;\exists(\One,R)\in\hat{\Gamma}\mbox{ such that }(Q,\One)(\One,R)\leq P\}\\
&=& \{P\leq S\colon \exists Q\in\Delta_\X\;\exists R\in\Gamma\mbox{ such that }(Q,R)\leq P\}=\Delta_\phi.
\end{eqnarray*}
We argue next that $\Delta^+=\Delta^+_\phi$. Observe first that, for every $P \leq S$, we have 
\begin{eqnarray*}
P_{\hat{\X}} &=& \{ (s,\One) \in (\C_{S_\X}^\phi(\N), \One) \colon \exists (\One,t) \in \hat{T} \mbox{ such that } (s,\One)(\One,t) \in P\}\\
&=& \{(s,\One)\colon s\in \C_{S_\X}^\phi(\N),\;\exists t\in T\mbox{ such that }(s,t)\in P\}\\
&=& \{(s,\One)\colon s\in P_\X^\phi\}=(P_\X^\phi,\One)
\end{eqnarray*}
and
\begin{eqnarray*}
P_{\hat{\N}}&=&\{(\One,t)\in (\One,T) \colon \exists (s,\One) \in (\C_{S_\X}^\phi(\N), \One) \mbox{ such that } (s,t) \in P\}\\
&=&\{(\One,t)\colon t\in T,\;\exists s\in \C_{S_\X}^\phi(\N)\mbox{ such that }(s,t)\in P\}\\
&=&\{(\One,t)\colon t\in P_\N^\phi\}=(\One,P_\N^\phi).
\end{eqnarray*}
Hence, 
\begin{eqnarray*}
\Delta^+&=&\{P\leq S\colon P_{\hat{\X}}\in\hat{\Delta}_\X\mbox{ and }P_{\hat{\N}}\in\hat{\Gamma}\}\\
&=& \{P\leq S\colon (P_\X^\phi,\One)\in\hat{\Delta}_\X\mbox{ and }(\One,P_\N^\phi)\in\hat{\Gamma}\}\\
&=& \{P\leq S\colon P_\X^\phi\in\Delta_\X\mbox{ and }P_\N^\phi\in\Gamma\}=\Delta^+_\phi.
\end{eqnarray*}
So $\Delta_\phi=\Delta$ and $\Delta^+_\phi=\Delta^+$. Hence, (a) follows from Lemma~\ref{L:ProdLocInternal}. Moreover,  $\Delta=\Delta_\phi\subseteq\Delta^*\subseteq\Delta^+_\phi=\Delta^+$ and $\Delta^*$ is closed under taking $\L$-conjugates and overgroups in $S$. So using Corollary \ref{C:ProdLocInternal}, we conclude that $(\L, \Delta^*, S)$ is a locality and (b) holds. Moreover, as all the assumptions in Definition~\ref{D:InternalProdLoc} are fulfilled, we deduce that (c) holds. Now (d) holds clearly as well.
\end{proof}

\subsection{Semidirect products of groups with localities}\label{Ss:SemidirectProdGroupLoc}

In this subsection we consider internal and external semidirect products of a finite group $\X$ with a locality $(\N,\Gamma,T)$. Note that we can attach to any finite group $\X$ a locality $(\X,S_\X,\Delta_\X)$ by choosing a Sylow $p$-subgroup $S_\X$ of $\X$ and taking $\Delta_\X$ to be the set of all subgroups of $S_\X$. Hence, the concepts and results we will state here are actually special cases of the ones introduced in the previous subsection. We feel however that it is worth spelling out this special case, in particular since it is needed to define wreath products of localities.

\begin{definition}[Internal semidirect product of a group with a locality]\label{D:InternalGroupLoc}
Let $(\L,\Delta^*,S)$ be locality. Then we say that $(\L,\Delta^*,S)$ is an internal semidirect product of a subgroup $\X$ with a sublocality $(\N,\Gamma,T)$ if, setting  $S_\X:=S\cap\X$ and writing $\Delta_\X$ for the set of all subgroups of $S_\X$, the subgroup $S_\X$ is a Sylow $p$-subgroup of $\X$ and $(\L,\Delta^*,S)$ is the internal semidirect product of the sublocality $(\X,\Delta_\X,S_\X)$ with $(\N,\Gamma,T)$. We say that the internal semidirect product of $\X$ with $(\N,\Gamma,T)$ is sparse (or ample) if the internal semidirect product of $(\X,\Delta_\X,S_\X)$ with $(\N,\Gamma,T)$ is sparse (or ample, respectively).
\end{definition}

\begin{remark}
Let $(\L,\Delta^*,S)$ be a locality which is an internal semidirect product of a subgroup $\X$ with a locality $(\N,\Gamma,T)$. Set $S_\X:=S\cap\X$, write $\Delta_\X$ for the set of all subgroups of $S_\X$, and adopt the notation introduced in Hypothesis~\ref{H:ProdLocInternal} (which is also used in Definition~\ref{D:InternalProdLoc}). Then, as the trivial subgroup is an element of $\Delta_\X$ and $\Gamma$ is closed under taking overgroups in $T$, we have 
\[\Delta=\{P\leq S\colon \exists R\in \Gamma\mbox{ such that }R\leq P\}=\{P\leq S\colon P\cap T\in\Gamma\}\]
and
\[\Delta^+=\{P\leq S\colon P_\N\in\Gamma\}.\]
In particular, $(\L,\Delta^*,S)$ is a sparse internal semidirect product of $\X$ with $(\N,\Gamma,T)$ if $\Delta^*$ is the set of all overgroups of the elements of $\Gamma$.
\end{remark}

\begin{definition}[Action of a group on a locality]
 Let $\X$ be a group and $(\N,\Gamma,T)$ a locality. Then $\X$ acts on $(\N,\Gamma,T)$ if there exists a group homomorphism $\phi\colon \X\rightarrow \Aut(\N)$ such that $\Gamma$ is $\X$-invariant, i.e. $R^{x^\phi}\in\Gamma$ for all $R\in\Gamma$ and $x\in\X$. 
\end{definition}

\begin{remark}\label{R:ActionGroupLoc}
 Suppose a finite group $\X$ acts on a locality $(\N,\Gamma,T)$. Let $S_\X$ be a Sylow $p$-subgroup of $\X$ (where $T$ is a $p$-group), and let $\Delta_\X$ be the set of all subgroups of $S_\X$. Then $(\X,\Delta_\X,S_\X)$ acts on $(\N,\Gamma,T)$, as $\Gamma$ is $\X$-invariant and $\Delta_\X$ contains every subgroup of $S_\X$. Moreover, setting $\L=\X\ltimes_\phi\N$ and $S=(S_\X,T)$, and using the notation introduced in Definition~\ref{D:ExtProdLoc} we have
\[\Delta_\phi=\{P\leq S\colon \exists R\in\Gamma\mbox{ such that }(\One,R)\leq P\}.\]
and
\[\Delta^+_\phi=\{P\leq S\colon P_\N^\phi\in \Gamma\}.\]
Here
\[P_\N^\phi=\{t\in T\colon \exists s\in \C_{S_\X}^\phi(\N)\mbox{ such that }(s,t)\in P\}.\]
\end{remark}

\begin{definition}[External semidirect product of a group with a locality]\label{D:ExternalGroupLoc}
Suppose a finite group $\X$ acts on a locality $(\N,\Gamma,T)$ via $\phi$ . Pick a Sylow $p$-subgroup $S_\X$ of $\X$ and write $\Delta_\X$ for the set of all subgroups of $S_\X$. Then an external semidirect product of $\X$ with $(\N,\Gamma,T)$ via $\phi$ (or of $(\X,S_\X)$ with $(\N,\Gamma,T)$ via $\phi$) is a triple $(\L,\Delta^*,S)$ which is an external semidirect product of $(\X,\Delta_\X,S_\X)$ with $(\N,\Gamma,T)$.
The external semidirect product of $\X$ with $(\N,\Gamma,T)$ is called sparse (or ample) if the external semidirect product of $(\X,S_\X,\Delta_\X)$ with $(\N,\Gamma,T)$ is sparse (or ample). 
\end{definition}

\begin{remark}
Using Remark~\ref{R:ActionGroupLoc}, we see that an external semidirect product of a group $\X$ with a locality $(\N,\Gamma,T)$ (via a homomorphism $\phi\colon\X\rightarrow\Aut(\N)$) is a triple $(\L,\Delta^*,S)$ such that the following hold:
\begin{itemize}
 \item [(1)] $\L=\X\ltimes_\phi\N$ and $S=(S_\X,T)$ for some Sylow $p$-subgroup $S_\X$ of $\X$;
 \item [(2)] $(\One,R)\in\Delta^*$ for every $R\in\Gamma$. 
 \item [(3)] If $P\in\Delta^*$, then $P_\N^\phi\in \Gamma$.
 \item [(4)] $\Delta^*$ is closed under taking $\L$-conjugates and overgroups in $S$. 
\end{itemize}
Moreover, a sparse external semidirect product of $\X$ with $(\N,\Gamma,T)$ is a triple $(\L,\Delta^*,S)$ such that (1) holds and $\Delta^*$ is the set of all overgroups in $S$ of subgroups of the form $(\One,R)$ with $R\in\Gamma$. 
Similarly, an ample external semidirect product of $\X$ with $(\N,\Gamma,T)$ is a triple $(\L,\Delta^*,S)$ such that (1) holds and $\Delta^*$ is the set of all $P\leq S$ with $P_\N^\phi\in\Gamma$.
\end{remark}

\section{Wreath products}\label{S:WreathProducts}

\begin{definition}[External direct product]\label{direct}
Let $n\in\N$ such that for every $1\leq i\leq n$, we are given localities $(\L_i, \Delta_i, S_i)$ with partial products $\Pi_i \colon \D_i \rightarrow \L_i$. The external direct product of the localities $\L_i$, is the locality $(\L = \L_1 \times \dots \times \L_n, \Delta_1 * \dots * \Delta_n, S = S_1 \times \dots \times S_n)$ where
\begin{enumerate}
\item $\L = \{ (f_1, \dots, f_n) \mid f_i \in \L_i \}$;
\item for every word $w = ( (f_{1, 1}, \dots, f_{1,n}), (f_{2,1}, \dots, f_{2,_n}), \dots, (f_{m,1}, \dots, f_{m,n}))$ write $w_i = (f_{1,i}, \dots f_{m,i}) \in \L_i$ and set 
\[ \D = \D(\L) = \{ w \in \L \mid w_i \in \D_i \text{ for every } 1\leq  i \leq n \};\]
\item the partial product $\Pi \colon \D \rightarrow \L$ is given by 
$ \Pi(w) =(\Pi_1(w_1), \Pi_2(w_2), \dots, \Pi_n(w_n))$;
\item the inversion map $^{-1} \colon \L \rightarrow \L$ is given by $(f_1, \dots, f_n)^{-1} = (f_1^{-1}, \dots, f_n^{-1})$;
\item $\Delta_1 * \dots * \Delta_n = \{ P \leq S \mid  Q_1 \times Q_2 \times \dots \times Q_n \leq P \text{ for some } Q_i  \in \Delta_i \}$.
\end{enumerate}
\end{definition}

\begin{remark}
The fact that $(\L = \L_1 \times \dots \times \L_n, \Delta_1 * \dots * \Delta_n, S = S_1 \times \dots \times S_n)$ is indeed a locality can be proved with an argument similar to the one used to prove the statement for $n=2$ in \cite[Lemma 5.1]{direct.prod}. 
\end{remark}

\begin{definition} If $(\L, \Delta, S)$ is a locality and $k\geq 1$ is an integer, we write $(\L^k, \Delta^{*k}, S^k)$ to denote the external direct product of $k$ copies of $(\L,\Delta, S)$.
\end{definition}

\begin{lemma}\label{L:WreathProdAction}
Let  $k\geq 2$ be an integer, let $X$ be a subgroup of the symmetric group $\Sym(k)$ and let $(\N, \Gamma, T)$ be a non-trivial locality. Then  $X$ acts on $(\N^k, \Gamma^{*k}, T^k)$ with natural action respecting $\Gamma^{*k}$.
\end{lemma}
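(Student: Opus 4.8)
The plan is to exhibit the natural permutation action of $X\leq\Sym(k)$ on the direct power $\N^k$, check that it consists of automorphisms of the partial group $\N^k$, verify that the assignment $\sigma\mapsto$ (permutation-of-coordinates automorphism) is a homomorphism of (partial) groups $X\to\Aut(\N^k)$, and finally confirm the two extra conditions in the definition of an action of a locality on a locality: that $X$ leaves $\Gamma^{*k}$ invariant, and that $Q\cap\C_{X}^{\phi}(\N^k)\in\Delta_X$ for every $Q\in\Delta_X$, where $\Delta_X$ is the set of \emph{all} subgroups of a fixed Sylow $p$-subgroup $S_X$ of $X$ (so this last condition is automatic).

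\textbf{Setup and the action.} For $\sigma\in X$ define $\sigma^{\phi}\colon\N^k\to\N^k$ by $(f_1,\dots,f_k)^{\sigma^{\phi}}=(f_{1\sigma^{-1}},\dots,f_{k\sigma^{-1}})$; that is, the entry in position $i$ of the image is $f_{i\sigma^{-1}}$ (the precise convention must be chosen so that $(\sigma\tau)^{\phi}=\sigma^{\phi}\tau^{\phi}$, writing maps on the right as the paper does, but this is a routine bookkeeping choice). First I would check that $\sigma^{\phi}$ is a bijection of $\N^k$ with inverse $(\sigma^{-1})^{\phi}$, and that it is a homomorphism of partial groups: unravelling Definition~\ref{direct}, a word $w\in\W(\N^k)$ lies in $\D(\N^k)$ iff each coordinate word $w_i$ lies in $\D(\N)=\D_\N$, and since $\sigma^{\phi}$ merely permutes the coordinate words, $w\in\D(\N^k)\iff w^{\sigma^{\phi}}\in\D(\N^k)$; moreover $\Pi(w)^{\sigma^{\phi}}=\Pi(w^{\sigma^{\phi}})$ coordinatewise because $\Pi$ on $\N^k$ is computed coordinatewise. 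Hence $\sigma^{\phi}\in\Aut(\N^k)$, and $\phi\colon X\to\Aut(\N^k)$, $\sigma\mapsto\sigma^{\phi}$, is a group homomorphism; since $X$ is a group, by the remark following Definition~\ref{D:PartialGroupAction} this is the same as saying $X$ acts on the partial group $\N^k$ via $\phi$.

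\textbf{Invariance of $\Gamma^{*k}$.} By Definition~\ref{direct}(5), $\Gamma^{*k}=\{P\leq T^k : R_1\times\cdots\times R_k\leq P \text{ for some }R_i\in\Gamma\}$. Fix $\sigma\in X$ and $P\in\Gamma^{*k}$ with $R_1\times\cdots\times R_k\leq P$, $R_i\in\Gamma$. Then $P^{\sigma^{\phi}}\leq (T^k)^{\sigma^{\phi}}=T^k$ (the coordinates of $T^k$ are just permuted), and $(R_1\times\cdots\times R_k)^{\sigma^{\phi}}=R_{1\sigma^{-1}}\times\cdots\times R_{k\sigma^{-1}}\leq P^{\sigma^{\phi}}$, which is again a product of members of $\Gamma$ since $\Gamma$ is closed under nothing more than reindexing here. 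Hence $P^{\sigma^{\phi}}\in\Gamma^{*k}$, so $\Gamma^{*k}$ is $X$-invariant. Finally, taking $S_X\in\Syl_p(X)$ and $\Delta_X$ to be the set of \emph{all} subgroups of $S_X$, the condition $Q\cap\C_X^{\phi}(\N^k)\in\Delta_X$ holds trivially for every $Q\in\Delta_X$, since any subgroup of $S_X$ is in $\Delta_X$. Therefore $(X,\Delta_X,S_X)$ acts on $(\N^k,\Gamma^{*k},T^k)$ in the sense of the definition of an action of a locality on a locality, i.e. (in the language of Definition~\ref{D:ExternalGroupLoc}) the group $X$ acts on the locality $(\N^k,\Gamma^{*k},T^k)$ with the natural action respecting $\Gamma^{*k}$.

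\textbf{Main obstacle.} There is no deep difficulty here; the only genuine care needed is (i) pinning down the coordinate-permutation convention so that $\phi$ is an honest homomorphism rather than an anti-homomorphism (given the paper's right-hand exponential notation for group actions and homomorphisms), and (ii) being careful that $\N^k$ — and in particular the claim that $\D(\N^k)$ and $\Pi$ are ``coordinatewise'' — is invoked exactly as in Definition~\ref{direct}, so that the verification that $\sigma^{\phi}$ is a partial-group automorphism is clean. Both are bookkeeping rather than conceptual, so I expect the proof to be short.
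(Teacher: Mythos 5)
Your proposal is correct and follows essentially the same route as the paper's proof: the natural coordinate-permuting action giving a group homomorphism $X\to\Aut(\N^k)$, followed by the observation that a product $Q_1\times\cdots\times Q_k\leq P$ of members of $\Gamma$ is carried to another such product inside $P^x$, so $\Gamma^{*k}$ is preserved. You are in fact more careful than the paper on two points it glosses over -- the $\sigma$ versus $\sigma^{-1}$ indexing convention needed to get a genuine (rather than anti-) homomorphism, and the explicit check that coordinate permutations are automorphisms of the partial group $\N^k$ -- but these are refinements of the same argument, not a different one.
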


\begin{proof}
Write $\N_i$ for the $i$-th copy of $\N$ appearing in the direct product $\N^k$.
Then every element of $\N^k$ is of the form $(n_1, \dots, n_k)$ for some $n_i \in \N_i$ and the natural action of $X$ on the indices $i$ gives a group homomorphism
\[ X \rightarrow \Aut(\N^k), \quad x \mapsto ( (n_1, \dots, n_k) \mapsto (n_1, \dots, n_k)^x = (n_{1^x}, \dots, n_{k^x}) ).\]
It remains to show that the action of $X$ preserves $\Gamma^{*k}$. Suppose $P \in \Gamma^{*k}$. Then by Definition \ref{direct}(5) for every $1\leq i\leq k$ there exists $Q_i \in \Gamma$ such that $Q_1 \times \dots \times Q_k \leq P$. Then for every $x \in X$ we have $Q_{1^x} \times \dots \times Q_{k^x} \leq P^x$. Since $Q_{i^x} = Q_j$ for some $1\leq j\leq k$ and $ Q_j \in \Gamma$, we deduce that $P^x \in \Gamma^{*k}$. Therefore $X$ acts on $\Gamma^{*k}$ and so it acts on the locality  $(\N^k, \Gamma^{*k}, T^k)$.
\end{proof}

\begin{definition}
Let  $k\geq 2$ be an integer and let $X$ be a subgroup of the symmetric group $\Sym(k)$.
\begin{enumerate}
\item A wreath product of $X$ with a locality $(\N,\Gamma,T)$ is an external semidirect product of $X$ with $\N^k$ via the action defined in Lemma~\ref{L:WreathProdAction};
\item The sparse (or ample) wreath product of $X$ with the locality $(\N, \Gamma, T)$ is the external semidirect product of $X$ with $(\N^k, \Gamma^{*k}, T^k)$ which is sparse (or ample, respectively).
\end{enumerate}
\end{definition}

\begin{remark}
If $(\L, \Delta, S)$ is a wreath product of $X$ with $(\N, \Gamma, T)$, then by Theorem \ref{Ext.loc} we can consider $\L$ as an internal semidirect product of a canonical image of $X$ in $\L$ acting on a canonical image of $(\N^k, \Gamma^{*k}, T^k)$ in $\L$. We will use this fact without further reference.
\end{remark}

\begin{lemma}\label{centralizer}
Let $k\geq 2$ be an integer and let $(\L, \Delta, S)$ be a wreath product of the group $X \leq \Sym(k)$ with $(\N, \Gamma, T)$. 
Suppose $\H \leq \N^k$ is such that for every $1 \leq  i \leq k$ there exists an element $h_i\in \H$ that has all entries but the $i$-th one equal to $1$. Then $\C_\L(\H) = \C_{\N^k}(\H)$. In particular $\C_\L(\N^k) \leq \N^k$.
\end{lemma}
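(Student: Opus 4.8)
The plan is to work inside $\L$ via its internal semidirect product description. By Lemma~\ref{Ext.loc}(c), $(\L,\Delta,S)$ is the internal semidirect product of the canonical copy $\hat X:=(X,\One)$ of $X$ with the canonical copy $\widehat{\N^k}:=(\One,\N^k)$ of $\N^k$, and by Lemma~\ref{subgroups}(d) the natural maps $X\to\hat X$ and $\N^k\to\widehat{\N^k}$ are isomorphisms of partial groups; I will identify $X$ with $\hat X$ and $\N^k$ with $\widehat{\N^k}$, so that $\H$ is a subgroup of the partial subgroup $\N^k$ of $\L$. Since $\N^k$ is a partial subgroup, conjugation of an element of $\N^k$ by an element of $\N^k$ agrees whether computed in $\L$ or in $\N^k$, whence $\C_\L(\H)\cap\N^k=\C_{\N^k}(\H)$. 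It therefore suffices to prove $\C_\L(\H)\subseteq\N^k$.

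So let $g\in\C_\L(\H)$ and, using axiom (SD2), write $g=\Pi(x,n)$ with the unique $x\in X$ and $n\in\N^k$; the goal is to show $x=\One$, for then $g=\Pi(\One,n)=n\in\N^k$. Fix $h\in\H$; as $g$ centralizes $h$ we have $h\in\D(g)$, so Corollary~\ref{C:Conjnx}(a), applied to $\L$ as the internal semidirect product of $X$ with $\N^k$, gives $h^x\in\D(n)$ and
\[(h^x)^n=h^{\Pi(x,n)}=h^g=h.\]
Now I would make the coordinates explicit. Identifying $x$ with the corresponding $\sigma\in X\le\Sym(k)$, the automorphism $x^\phi$ of $\N^k$ permutes the $k$ coordinates according to $\sigma$ by Lemma~\ref{L:WreathProdAction}; in particular it carries an element supported on a single coordinate to an element supported on a single coordinate, moving the position according to $\sigma$ and leaving the nontrivial entry unchanged. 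On the other hand, by Definition~\ref{direct} the domain and partial product of $\N^k$ are coordinate-wise, so conjugation by $n=(n_1,\dots,n_k)$ acts coordinate-wise; and since each $c_{n_j}$ is an automorphism of $\N$, conjugation by $n$ preserves the set of coordinates at which an element of $\N^k$ is nontrivial. Fix $i\in\{1,\dots,k\}$ and choose $h_i\in\H$ as in the hypothesis: all its entries except the $i$-th equal $\One$, and its $i$-th entry $m_i$ is nontrivial (this nontriviality is what makes the following comparison conclusive). Then $h_i^x$, and hence $(h_i^x)^n$, is supported on exactly one coordinate with a nontrivial entry there; but $(h_i^x)^n=h_i$ is supported on the coordinate $i$. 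Comparing the unique nontrivial coordinate of the two sides forces $\sigma$ to fix $i$. Since $i$ was arbitrary and $X$ acts faithfully on $\{1,\dots,k\}$, this gives $\sigma=1$, i.e. $x=\One$; therefore $\C_\L(\H)\subseteq\N^k$ and so $\C_\L(\H)=\C_{\N^k}(\H)$. The final assertion is the special case $\H=\N^k$, whose hypothesis holds since $\N$ is non-trivial, giving $\C_\L(\N^k)=\C_{\N^k}(\N^k)\le\N^k$.

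The only delicate points are bookkeeping ones. One must ensure that the conjugations appearing are actually defined, which is exactly why Corollary~\ref{C:Conjnx}(a) is invoked: from $h\in\D(g)$ it produces both $h^x\in\D(n)$ and the identity $(h^x)^n=h^g$. And one must simultaneously use the permutation action of $X$ on coordinates and the coordinate-wise partial-group structure of $\N^k$; after that the support-tracking is purely formal. A minor point worth flagging is that the hypothesis must be read so that the $i$-th entry of $h_i$ is nontrivial, since otherwise $h_i=\One$ carries no information and the comparison is vacuous.
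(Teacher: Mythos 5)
Your proof is correct and follows essentially the same route as the paper's: decompose $g=\Pi(x,n)$ via (SD2), use that conjugation by $x$ permutes the supports of the elements $h_i$ while conjugation by $n\in\N^k$ acts coordinate-wise and preserves supports, and conclude $x=\One$ by comparing supports. Your explicit appeal to Corollary~\ref{C:Conjnx}(a) for the identity $(h^x)^n=h^g$ and your remark that the $i$-th entry of $h_i$ must be read as nontrivial are welcome clarifications of points the paper leaves implicit, but the argument is the same.
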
  

\begin{proof}
We identify $\L$ with the standard internal semidirect product of $X$ acting on $(\N^k, \Gamma^{*k}, T^k)$.
Let $g \in \C_\L(\H)$. Then $g=\Pi(x,m)$ for some $x \in X$ and $m\in \N^k$ and $S_g = S_{(x,m)}$.
Aiming for a contradiction, suppose $x\neq 1$. So there exists $1\leq i \leq k$ such that $i^x\neq i$. Then $h_{i^x}$ has the $i$-th entry equal to $1$, and the same holds for $(h_{i^x})^m=(h_i)^g$. Since the $i$-th entry of $h_i$ is not equal to $1$ by assumption, we deduce that $(h_i)^g \neq h_i$, contradicting the fact that $g \in \C_\L(\H)$. Therefore $x=1$ and $g=m\in \C_{\N^k}(\H)$. Since this is true for every $g \in  \C_\L(\H)$, we deduce that $\C_\L(\H) =\C_{\N^k}(\H)$.

By definition of wreath product, the locality $\N$ is non-trivial. Hence we can apply the lemma with $\N^k$ in place of $\H$ and we conclude that $\C_\L(\N^k) \leq \N^k$.
\end{proof}

\begin{lemma}\label{centralizer.1notGamma}
Let $k\geq 2$ be an integer and let $(\L, \Delta, S)$ be a wreath product of the group $X \leq \Sym(k)$ with $(\N, \Gamma, T)$. 
If  $1\notin \Gamma$, then for every $P \in \Delta$ and every $P\leq R \leq S$ we have $\C_\L(R) = \C_{\N^k}(R)$.
\end{lemma}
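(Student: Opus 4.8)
The plan is to reduce the statement to Lemma~\ref{centralizer}. Throughout, identify $\L$ with the internal semidirect product of (a copy of) $X$, acting by coordinate permutation, with (a copy of) the locality $(\N^k,\Gamma^{*k},T^k)$, as in the proof of Lemma~\ref{centralizer}; then $S=S_XT^k$ with $S_X=S\cap X$ a Sylow $p$-subgroup of $X$ and $T^k=S\cap\N^k$. Since a wreath product is by definition an external semidirect product, its object set $\Delta$ lies between the sparse and the ample one; in the notation of Hypothesis~\ref{H:ProdLocInternal} (applicable here by Lemma~\ref{Ext.loc}), this means $\Delta\subseteq\Delta^+$, so in particular $P\in\Delta^+$ and hence $P_\N\in\Gamma^{*k}$.

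The first point to check is that $\C_X(\N^k)=\One$. Since $(\N,\Gamma,T)$ is non-trivial (indeed $\Gamma\ne\emptyset$ and $1\notin\Gamma$ force $T\ne1$), the coordinate-permutation action of $X\le\Sym(k)$ on $\N^k$ is faithful: if $1\ne x\in X$ moves some index $i$, then any element of $\N^k$ with a non-trivial entry in coordinate $i^x$ and identity entries elsewhere is not fixed under conjugation by $x$. As conjugation by an element of $X$ centralizing $\N^k$ acts trivially on $\N^k$, such an element must be $\One$; thus $\C_X(\N^k)=\One$ (alternatively, this follows from the last sentence of Lemma~\ref{centralizer} together with $X\cap\N^k=\One$, Lemma~\ref{internal.prop.par}(b)). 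Consequently $\C_{S_X}(\N^k)=\One$, and hence $U_\N=U\cap T^k$ for every $U\le S$, in the notation of Hypothesis~\ref{H:ProdLocInternal}. Applying this to $P$ gives $P\cap T^k=P_\N\in\Gamma^{*k}$.

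Now I would use the hypothesis $1\notin\Gamma$. By Definition~\ref{direct}(5) there are $Q_1,\dots,Q_k\in\Gamma$ with $Q_1\times\dots\times Q_k\le P\cap T^k$, and each $Q_i$ is non-trivial because $1\notin\Gamma$. Put $\H:=Q_1\times\dots\times Q_k$, a subgroup of $\N^k$ with $\H\le P\cap T^k\le P\le R$. For each $i$ choose $q_i\in Q_i\setminus\{\One\}$ and let $h_i\in\N^k$ have $i$-th coordinate $q_i$ and all other coordinates $\One$; then $h_i\in\H$ has all entries but the $i$-th equal to $\One$, with the $i$-th entry non-trivial. Hence $\H$ satisfies the hypothesis of Lemma~\ref{centralizer}, which yields $\C_\L(\H)=\C_{\N^k}(\H)\le\N^k$.

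Finally, since $\H\le R$ we get $\C_\L(R)\le\C_\L(\H)\le\N^k$, and therefore $\C_\L(R)=\C_\L(R)\cap\N^k=\C_{\N^k}(R)$, which is the claim. The only step that is not routine bookkeeping is the second paragraph — pinning down $P_\N=P\cap T^k$, which requires knowing that no non-trivial element of $X$ centralizes $\N^k$; once Lemma~\ref{centralizer} is available, everything else follows formally.
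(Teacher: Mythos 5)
Your proof is correct and follows essentially the same route as the paper: extract $Q_1\times\dots\times Q_k\leq P$ with each $Q_i\in\Gamma$ non-trivial (using $1\notin\Gamma$), apply Lemma~\ref{centralizer} to $\H=Q_1\times\dots\times Q_k$, and conclude $\C_\L(R)\leq\C_\L(\H)\leq\N^k$. Your second paragraph (showing $\C_{S_X}(\N^k)=\One$ so that $P_\N=P\cap T^k\leq P$) supplies a justification for the containment $Q_1\times\dots\times Q_k\leq P$ that the paper's proof simply asserts, which is needed when the wreath product is not sparse; this is a welcome extra level of care rather than a deviation.
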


\begin{proof}
 Suppose $P \in \Delta$ and $P\leq R \leq S$. Then for every $1\leq i \leq k$  there exists $Q_i \in \Gamma$ such that $Q_1 \times \dots \times Q_k \leq P$.
By assumption $Q_i \neq 1$ for every $1 \leq i \leq k$. Hence by Lemma \ref{centralizer} we get

\[\C_{\L}(R) \leq \C_{\L}(P) \leq \C_{\L}(Q_1 \times \dots \times Q_k) = \C_{\N^k}(Q_1 \times \dots \times Q_k) \leq \N^k.\]

Hence $\C_\L(R) = \C_{\N^k}(R)$. 
\end{proof}

\bibliographystyle{alpha}
\bibliography{my.books}

\begin{thebibliography}{BLO03}

\bibitem[BLO03]{BLO2}
C.~Broto, R.~Levi, and B.~Oliver.
\newblock {The homotopy theory of fusion systems}.
\newblock 16:779--856, 2003.

\bibitem[Che]{loc1}
A.~Chermak.
\newblock {Finite Localities I}.
\newblock {\em preprint: arXiv:1505.07786v3}.

\bibitem[Che13]{Ch}
A.~Chermak.
\newblock Fusion systems and localities.
\newblock {\em Acta Math. 211}, pages 47--139, 2013.

\bibitem[Hen15]{Henke:2015a}
E.~Henke.
\newblock Products of partial normal subgroups.
\newblock {\em Pacific J. Math.}, 279(1-2):255--268, 2015.

\bibitem[Hen17]{direct.prod}
E.~Henke.
\newblock Direct and central product of localities.
\newblock {\em J. Algebra}, 491:158--189, 2017.

\bibitem[Hen19]{subcentric}
E.~Henke.
\newblock Subcentric linking systems.
\newblock {\em Trans. Amer. Math. Soc.}, 371, 2019.

\end{thebibliography}

\end{document}